\documentclass[a4paper,10pt]{amsart}
\usepackage{amsmath,amssymb, amsbsy}
\usepackage{subfigure}
\usepackage{color,psfrag}
\usepackage{enumerate}
\usepackage[dvips]{graphicx}

\usepackage{latexsym}
\usepackage{a4wide}
\usepackage{amscd}
\usepackage{mathrsfs}

\newcommand{\R}{{\mathbb R}}
\newcommand{\N}{{\mathbb N}}

\newcommand{\SN}{{\mathbb S}^{N-1}}

\newcommand{\e }{\varepsilon}
\newcommand{\Di}{{\mathcal D}^{1,2}(\R^N)}

\renewcommand{\geq }{\geqslant}

\renewcommand{\leq }{\leqslant}
\newcommand{\abs}[1]{\left\vert #1 \right\vert}

\newenvironment{pf}{\noindent{\sc Proof}.\enspace}{\hfill\qed\medskip}
\newenvironment{pfn}[1]{\noindent{\bf Proof of
    {#1}.\enspace}}{\hfill\qed\medskip}
\newtheorem{Theorem}{Theorem}[section]
\newtheorem{Corollary}[Theorem]{Corollary}
\newtheorem{Lemma}[Theorem]{Lemma}
\newtheorem{Proposition}[Theorem]{Proposition}
\theoremstyle{definition}

\newtheorem{remark}[Theorem]{Remark}

\begin{document}

\title[Singularity of eigenfunctions, Part II]{Singularity of eigenfunctions at the junction of shrinking tubes, Part II}
\author{Laura Abatangelo, Veronica Felli, Susanna Terracini}

\address{
\hbox{\parbox{5.7in}{\medskip\noindent
  L. Abatangelo, V. Felli\\
Dipartimento di Matematica e Applicazioni,\\
 Universit\`a di Milano Bicocca, \\
Piazza Ateneo Nuovo, 1, 20126 Milano (Italy)         . \\[2pt]
         {\em{E-mail addresses: }}{\tt laura.abatangelo@unimib.it, veronica.felli@unimib.it.}\\[5pt]
 S. Terracini\\
 Dipartimento di Matematica ``Giuseppe Peano'',\\
Universit\`a di Torino, \\
Via Carlo Alberto, 10,
10123 Torino (Italy). \\[2pt]
                                     \em{E-mail address: }{\tt susanna.terracini@unito.it.}}}
}

\date{February 12, 2013}

\thanks{2010 {\it Mathematics Subject Classification.} 35B40,
35J25, 35P05, 35B20.\\
  \indent {\it Keywords.} Weighted 
  elliptic eigenvalue problem, dumbbell domains, Almgren monotonicity
  formula.\\
\indent Partially supported by the PRIN2009 grant ``Critical Point Theory and 
Perturbative Methods for Nonlinear \\
\indent Differential Equations''.}

\begin{abstract}
In continuation with \cite{FT12}, we investigate the asymptotic behavior of weighted eigenfunctions
in two half-spaces connected by a thin tube. We provide several improvements about some convergences
stated in \cite{FT12}; most of all, we provide the exact asymptotic behavior of the implicit normalization for
solutions given in \cite{FT12} and thus describe  the
$(N-1)$-order singularity developed at a junction of the tube
(where $N$ is the space dimension).
\end{abstract}

\maketitle

\section{Introduction and statement of the main result}

The interest in the spectral analysis of thin branching domains
arising in the theory of quantum graphs modeling waves in thin
graph-like structures (narrow waveguides, quantum wires, photonic
crystals, blood vessels, lungs), see e.g. \cite{CF, kuchment},
motivates a large literature dealing with elliptic eigenvalue problems
in varying domains; we mention among others
\cite{anne,AD,arrieta,AK,BV,BHM,BZ,CH,DD,Dancer1,Dancer2,daners,grigorieff,RT,stummel}.
 
In \cite{FT12}, the asymptotic behavior of eigenfunctions at the
junction of shrinking tubes has been investigated.  In a
 dumbbell domain which is
going to disconnect, it can be shown that, generically, the mass
of a given eigenfunction of the Dirichlet Laplacian concentrates in
only one component of the limiting domain, while the restriction to the other domain, when
suitably normalized, develops a singularity at the junction of the
tube, as the channel section tends to zero.  The main result of
\cite{FT12} states that, under a proper nondegeneracy condition, the
normalized limiting profile has a singularity of order $N-1$, where
$N$ is the space dimension. The strategy developed in \cite{FT12} to evaluate the rate to the
singularity at the junction is based upon a sharp control of the
transversal frequencies along the connecting tube, inspired by the monotonicity method introduced by Almgren
\cite{almgren} and then extended by Garofalo and Lin \cite{GL}
to elliptic operators with variable coefficients in order to prove
unique continuation properties.

In continuation with \cite{FT12}, we investigate the asymptotic
behavior of solutions to weighted eigenvalue problems in a dumbbell
domain $\Omega^\e\subset\R^N$, $N\geq 3$, formed by two
half-spaces connected by a tube with length $1$ and cross-section of
radius $\e$:
$$
\Omega^\e=D^-\cup \mathcal C_\e\cup D^+,
$$
where    $\e\in (0,1)$ and
\begin{align*}
D^-&=\{(x_1,x')\in \R\times \R^{N-1}:x_1<0\},\\
\mathcal C_\e&=\{(x_1,x')\in \R\times \R^{N-1}:0\leq x_1\leq1,\ |x'|<\e\},\\
D^+&=\{(x_1,x')\in \R\times \R^{N-1}:x_1>1\}.
\end{align*}
\begin{figure}[h]
 \centering
   \begin{psfrags}
     \psfrag{D-}{$D^-$}
     \psfrag{D+}{$D^+$}
\psfrag{e}{${\scriptsize{\e}}$}
\psfrag{1}{${\scriptsize{1}}$}
\psfrag{C}{$\mathcal C_\e$}
     \includegraphics[width=7cm]{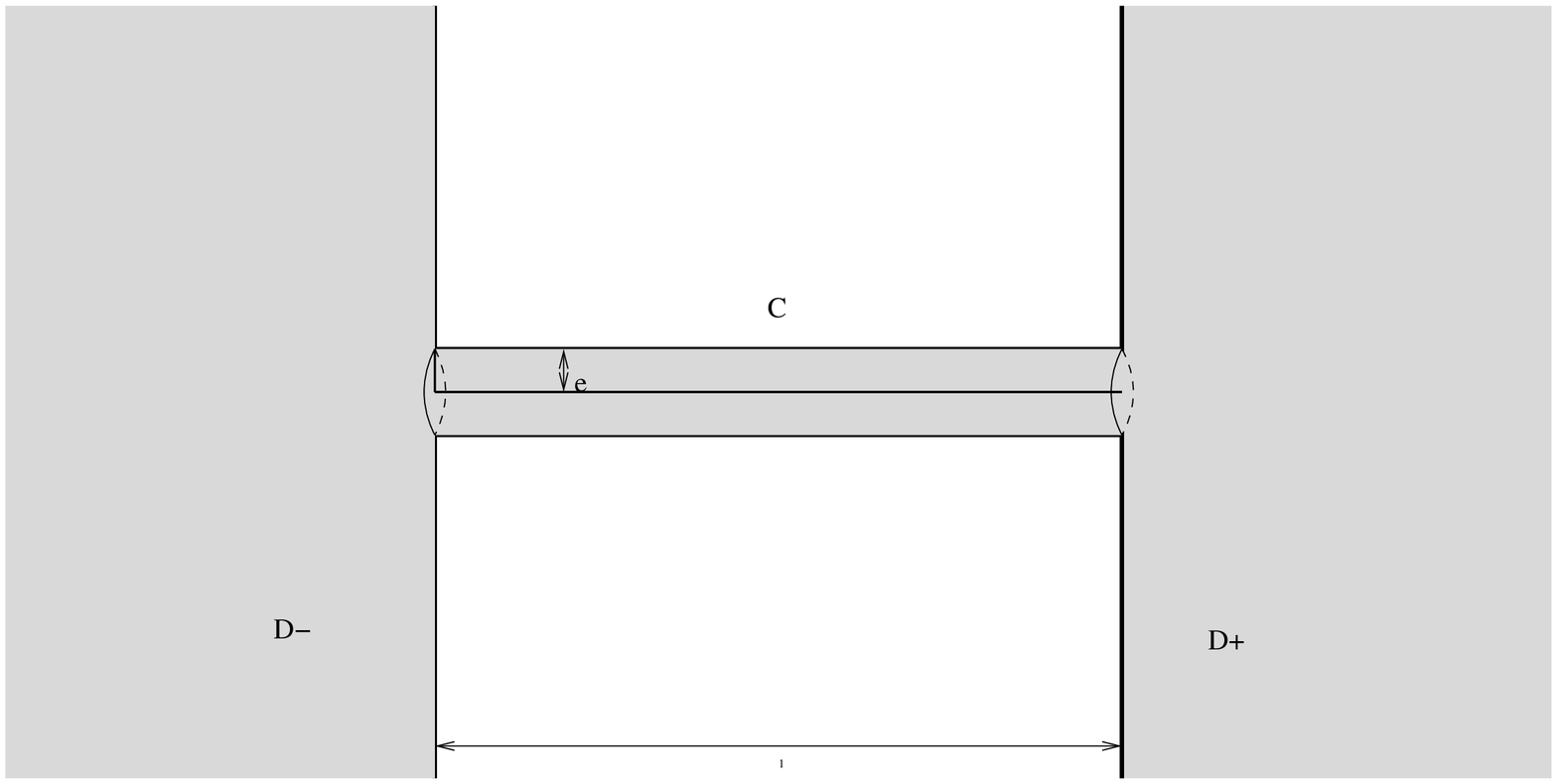}
   \end{psfrags}
 \caption{The domain $\Omega^\e$.}\label{fig:dd}
\end{figure}

\noindent We also denote, for all $t>0$,
$$
B^+_t:=D^+\cap B({\mathbf e}_1,t),\quad
B^-_t:=D^-\cap B({\mathbf 0},t),
$$
where ${\mathbf e}_1
=(1,0,\dots,0)\in \R^N$, ${\mathbf 0}=(0,0,\dots,0)$, and $B(P,t):=\{x\in\R^N:|x-P|<t\}$ denotes the
ball of radius $t$ centered at $P$.
Let $p\in C^1(\R^N,\R)\cap L^{\infty}(\R^N)$ be a weight  satisfying
\begin{align}
\label{eq:p} & p\geq 0\text{ a.e. in }\R^N,\
p\in L^{N/2}(\R^N),\ \nabla p(x)\cdot x\in L^{N/2}(\R^N),
\ \frac{\partial p}{\partial x_1}\in L^{N/2}(\R^N),\\
\label{eq:p2} & p\not\equiv 0\text{ in }D^-,\quad 
 p\not\equiv 0\text{ in }D^+,\quad 
p(x)=0\text { for all } x\in B^-_3\cup \mathcal C_1
\cup B^+_3.
\end{align}
Assumption \eqref{eq:p2} is stronger than in \cite{FT12}. We are
confident that the present arguments apply even under the weaker
assumption of \cite{FT12}, up to
several modifications mainly concerning calculus. For reader's
convenience we consider worthwhile presenting the argument in this
simpler case.

 For every open set $\Omega\subset\R^N$,  we denote as $\sigma_p(\Omega)$ the
 set of the diverging eigenvalues $\lambda_1(\Omega)\leq
\lambda_2(\Omega)\leq\cdots\leq\lambda_k(\Omega)\leq \cdots$
 (where each $\lambda_k(\Omega)$ is repeated as many times as its multiplicity) of 
 the weighted eigenvalue problem 
$$
\begin{cases}
-\Delta \varphi=\lambda p \varphi,&\text{in }\Omega,\\
\varphi=0,&\text{on }\partial \Omega.
\end{cases}
$$
It is easy to verify that $\sigma_p(D^-\cup D^+)=\sigma_p(D^-)\cup \sigma_p(D^+)$. 

Let us assume that there exists $k_0\geq 1$ such that 
\begin{align}
  \label{eq:53} \lambda_{k_0}(D^+)&\text{ is simple and the
    corresponding eigenfunctions}\\
\notag&\text{ have in ${\mathbf e}_1
    $ a zero of order $1$},\\
  \label{eq:54} \lambda_{k_0}(D^+)&\not\in \sigma_p(D^-).
  \end{align}
We can then fix an eigenfunction $u_0\in{\mathcal
    D}^{1,2}(D^+)\setminus\{0\}$ associated to $\lambda_{k_0}(D^+)$,
  i.e. solving
\begin{equation}\label{eq:u0}
\begin{cases}
-\Delta u_0=\lambda_{k_0}(D^+) p u_0,&\text{in }D^+,\\
u_0=0,&\text{on }\partial D^+,
\end{cases}
\end{equation}
such that 
\begin{equation}\label{eq:13}
\frac{\partial u_0}{\partial x_1}({\mathbf e}_1
)>0.
\end{equation}
Here and in the sequel, for every open set $\Omega\subseteq\R^N$, ${\mathcal
    D}^{1,2}(\Omega)$ denotes the functional space obtained as completion of 
$C^\infty_{\rm c}(\Omega)$ 
with respect to the Dirichlet norm $\big(\int_{\Omega}|\nabla u|^2dx\big)^{1/2}$.

From \cite[Example 8.2, Corollary 4.7, Remark 4.3]{daners} (see also
\cite[Lemma 1.1]{FT12}), it follows that, letting
\begin{equation*}
\lambda_\e=\lambda_{\bar k}(\Omega^\e)  
\end{equation*}
where $\bar k=k_0+
\mathop{\rm card}\big\{
j\in\N\setminus\{0\}:\lambda_j(D^-)\leq \lambda_{k_0}(D^+)\}$,
so that $\lambda_{k_0}(D^+)=\lambda_{\bar k}(D^-\cup D^+)$, there
holds 
\begin{equation}\label{eq:52}
\lambda_\e\to \lambda_{k_0}(D^+)\quad\text{as }\e\to0^+.
\end{equation}
Furthermore, for every $\e$ sufficiently small, $\lambda_\e$
is simple and there exists an eigenfunction $u_\e$
associated to $\lambda_\e$, i.e. satisfying
\begin{equation}\label{problema}
\begin{cases}
-\Delta u_\e=\lambda_\e p u_\e,&\text{in }\Omega^\e,\\
u_\e=0,&\text{on }\partial \Omega^\e,
\end{cases}
\end{equation}
such that
\begin{equation}\label{convergenza_u_0}
u_\e\to u_0\quad\text{in }{\mathcal D}^{1,2}(\R^N)
\quad\text{as }\e\to0^+,
\end{equation}
where in the above formula we mean the functions $u_\e,u_0$ to be trivially extended to the whole $\R^N$.
We refer to \cite[\S 5.2]{bucur2006} for uniform convergence of eigenfunctions.

For all $t>0$, let us denote as $\mathcal H_t^-$ the completion of 
$C^\infty_{\rm c}(D^-\setminus B_t^-)$ with
respect to the norm $\big(\int_{D^-\setminus {B_t^-}}|\nabla
v|^2dx\big)^{1/2}$, i.e.  $\mathcal H_t^-$ is the space of functions
with finite energy in $D^-\setminus \overline{B_t^-}$ vanishing on
$\partial D^-$.
We recall that functions in $\mathcal H_t^-$ satisfy the following Sobolev
type inequality
\begin{equation}\label{eq:sobHt-}
C_S\bigg(\int_{D^-\setminus {B_t^-}}|v(x)|^{2^*}\!dx\bigg)^{\!\!2/2^*}\!\!\!\leq 
\int_{D^-\setminus {B_t^-}}\!|\nabla v(x)|^2dx,\text{ for all
}t>0\text{ and }v\in \mathcal H_t^-,
\end{equation}
for some $C_S=C_S(N)>0$ depending only on the dimension $N$ (and
independent on $t$), see \cite[Lemma 3.2]{FT12}.

We also define, for all $t>0$,
\begin{equation}\label{eq:defGamma_r-}
\Gamma_t^-=D^-\cap \partial  B^-_{t}.
\end{equation}
Let
\begin{align*}
  \Psi: {\mathbb S}^{N-1}\to\R, \quad \Psi (\theta_1,\theta_2,\dots,\theta_N)=\frac{\theta_1}{\Upsilon_N},
\end{align*}
being ${\mathbb S}^{N-1}=\{(\theta_1,\theta_2,\dots,\theta_N)\in
\R^N:\sum_{i=1}^N\theta_i^2=1\}$ the unit $(N-1)$-dimensional sphere
and
\begin{align}\label{eq:upsilonN}
\Upsilon_N=\sqrt{\tfrac12{\textstyle{\int}}_{{\mathbb
        S}^{N-1}}\theta_1^2d\sigma(\theta)}.
\end{align}
Here
and in the sequel, the notation $d\sigma$ is used to denote the volume
element on $(N-1)$-dimensional surfaces.  
We
notice that, letting  
\begin{align*}
&{\mathbb
  S}^{N-1}_-:=\{\theta=(\theta_1,\theta_2,\dots,\theta_N)\in{\mathbb
  S}^{N-1}:\theta_1<0\},\\
&{\mathbb
  S}^{N-1}_+:=\{\theta=(\theta_1,\theta_2,\dots,\theta_N)\in{\mathbb
  S}^{N-1}:\theta_1>0\},
\end{align*}
$\Psi^-=-\frac{\theta_1}{\Upsilon_N}$ is the
first positive $L^2({\mathbb S}^{N-1}_-)$-normalized eigenfunction of
$-\Delta_{{\mathbb S}^{N-1}}$ on ${\mathbb S}^{N-1}_-$ under null
Dirichlet boundary conditions satisfying
\begin{equation}\label{eq:eigenpsi-}
-\Delta_{{\mathbb
    S}^{N-1}}\Psi^-=(N-1) \Psi^-\quad\text{on }{\mathbb S}^{N-1}_-,
\end{equation}
and 
$\Psi^+=\frac{\theta_1}{\Upsilon_N}$ is the
first positive $L^2({\mathbb S}^{N-1}_+)$-normalized eigenfunction of
$-\Delta_{{\mathbb S}^{N-1}}$ on ${\mathbb S}^{N-1}_+$ under null
Dirichlet boundary conditions satisfying
\begin{equation}\label{eq:eigenpsi+}
-\Delta_{{\mathbb
    S}^{N-1}}\Psi^+=(N-1) \Psi^+\quad\text{on }{\mathbb S}^{N-1}_+.
\end{equation}
The main results of \cite{FT12} are summarized in the following theorem.
\begin{Theorem}\label{t:main}{\rm (\cite{FT12})}
  Let us assume \eqref{eq:p}--\eqref{eq:13} hold and let $u_\e$ as in
  \eqref{problema}. Then there exists $\tilde k\in(0,1)$ such that,
  for every sequence $\e_{n}\to 0^+$, there exist a subsequence
  $\{\e_{n_j}\}_j$, $U\in C^2(D^-)\cup\big(\bigcup_{t>0}\mathcal
  H_t^-\big)$, $U\not\equiv 0$, and $\beta<0$ such that
\begin{align}
\tag{i}
\frac{u_{\e_{n_j}}}{\sqrt{\int_{\Gamma^-_{\tilde
        k}}u_{\e_{n_j}}^2d\sigma}}\to U  \ \text{ as }j\to+\infty
\quad &\text{strongly in }\mathcal H_t^- \text{ for every }t>0 \text{ and in }\\[-10pt]
& \notag 
C^2(\overline{B_{t_2}^-\setminus B_{t_1}^-}) \text{ for all }0<t_1<t_2;\\
\tag{ii} \lambda^{N-1} U(\lambda x)\to \beta\,\frac{x_1}{|x|^N}
\ \text{ as }\lambda \to 0^+
\quad&\text{strongly in }\mathcal H_t^- \text{ for every }t>0 \text{ and in }\\[-5pt]
& \notag
C^2(\overline{B_{t_2}^-\setminus B_{t_1}^-}) \text{ for all }0<t_1<t_2.
\end{align}
 \end{Theorem}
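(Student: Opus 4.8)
The plan is to work with the normalized restrictions to $D^-$,
$v_\e:=u_\e\big/\big(\int_{\Gamma^-_{\tilde k}}u_\e^2\,d\sigma\big)^{1/2}$, which solve $-\Delta v_\e=\lambda_\e\,p\,v_\e$ in $D^-$, vanish on $\partial D^-$ away from the tube mouth, match the cylinder solution across the shrinking disk $\{x_1=0,\ |x'|<\e\}$, and satisfy $\int_{\Gamma^-_{\tilde k}}v_\e^2\,d\sigma=1$. First I would adapt the Almgren--Garofalo--Lin monotonicity scheme to this geometry, introducing the height $H_\e(r)=\int_{\Gamma^-_r}v_\e^2\,d\sigma$, the energy $D_\e(r)=\int_{B^-_r}(|\nabla v_\e|^2-\lambda_\e\,p\,v_\e^2)\,dx$ and the frequency $\mathcal N_\e(r)=rD_\e(r)/H_\e(r)$, and establishing an approximate monotonicity $\frac{d}{dr}\log\mathcal N_\e(r)\ge-C$ with $C$ independent of $\e$, on a range of radii reaching down to the scale of the tube. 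The hard part is precisely here: the Rellich--Pohozaev identity underlying this differential inequality produces boundary contributions on the tube mouth and on $\partial D^-\cap\partial B^-_r$, and bounding these uniformly in $\e$ --- via the Sobolev inequality \eqref{eq:sobHt-}, trace estimates on $\mathcal C_\e$, and the sharp control of the transversal frequency along the connecting tube --- is the technical core of the argument; the assumption \eqref{eq:p2} that $p$ vanishes near the junction is what makes these estimates clean.

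Granted the frequency control, the monotonicity formula yields doubling-type inequalities comparing $H_\e$ at different scales, hence uniform bounds on $\{v_\e\}$ in $\mathcal H_t^-$ for each fixed $t>0$; elliptic regularity (using $p\in C^1$) upgrades these to uniform $C^2$ bounds on compact subsets of $\overline{D^-}\setminus\{\mathbf 0\}$. For an arbitrary sequence $\e_n\to0^+$ I would then extract a subsequence along which $v_{\e_{n_j}}$ converges --- weakly, and then, exploiting the equation, strongly --- to some $U$ in $\mathcal H_t^-$ for every $t>0$ and in $C^2(\overline{B^-_{t_2}\setminus B^-_{t_1}})$ for all $0<t_1<t_2$. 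Passing to the limit in the equation and boundary conditions, and using $\lambda_\e\to\lambda_{k_0}(D^+)$, one finds $-\Delta U=\lambda_{k_0}(D^+)\,p\,U$ in $D^-$ with $U=0$ on $\partial D^-\setminus\{\mathbf 0\}$; by \eqref{eq:p2}, $U$ is harmonic in $B^-_3$ and smooth up to $\partial D^-$ off $\mathbf 0$, while the convergence near $\Gamma^-_{\tilde k}$ gives $\int_{\Gamma^-_{\tilde k}}U^2\,d\sigma=1$, so $U\not\equiv0$. This proves (i).

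For (ii) I would analyze the singularity of $U$ at $\mathbf 0$ by separation of variables. Reflecting $U$ oddly across $\{x_1=0\}$ yields a harmonic function on $B(\mathbf 0,3)\setminus\{\mathbf 0\}$, whence $U(r\theta)=\sum_{k\ge1}\big(a_kr^{\sigma_k^+}+b_kr^{\sigma_k^-}\big)\psi_k(\theta)$, where $\{\psi_k\}$ are the $L^2$-normalized Dirichlet eigenfunctions of $-\Delta_{{\mathbb S}^{N-1}}$ on ${\mathbb S}^{N-1}_-$ with eigenvalues $\mu_1<\mu_2\le\cdots$ (so $\mu_1=N-1$ is simple and $\psi_1=\Psi^-$), and $\sigma_k^\pm$ solve $\sigma(\sigma+N-2)=\mu_k$; thus $\sigma_1^+=1$, $\sigma_1^-=1-N$, while $\sigma_k^+>1$ and $\sigma_k^-<1-N$ for $k\ge2$. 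If every $b_k$ vanished, $U$ would extend to an element of ${\mathcal D}^{1,2}(D^-)$ solving the weighted eigenvalue problem on $D^-$ at the level $\lambda_{k_0}(D^+)$, contradicting \eqref{eq:54}; so $U$ is genuinely singular at $\mathbf 0$. On the other hand, the orientation condition \eqref{eq:13}, together with the uniform convergence $u_\e\to u_0$ and the maximum principle in the cylinder and in $B^-_3$, forces $u_\e\ge0$ --- hence $v_\e\ge0$, hence $U\ge0$ --- in a punctured neighbourhood of $\mathbf 0$; since for $k\ge2$ the mode $b_kr^{\sigma_k^-}\psi_k$ is strictly more singular than $r^{1-N}$ and $\psi_k$ changes sign, no such term may appear. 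Hence $b_k=0$ for all $k\ge2$, $b_1\ne0$, and positivity forces $b_1>0$, so the singular part of $U$ is exactly $b_1r^{1-N}\Psi^-(\theta)=-\tfrac{b_1}{\Upsilon_N}\tfrac{x_1}{|x|^N}$. Since the regular part scales like $\lambda^{N-1+\sigma_k^+}\to0$, the explicit expansion gives $\lambda^{N-1}U(\lambda x)\to\beta\,\tfrac{x_1}{|x|^N}$ with $\beta=-b_1/\Upsilon_N<0$, strongly in $\mathcal H_t^-$ for every $t>0$ and in $C^2$ on annuli; this is (ii).

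In summary, I expect two points to be decisive: the uniform-in-$\e$ monotonicity estimate, i.e.\ showing that the Rellich--Pohozaev boundary terms generated by the shrinking tube are negligible, which drives the whole blow-up and delivers $U$ with its regularity and normalization; and the positivity of $u_\e$ near the junction, where the sign condition \eqref{eq:13} and a Hopf-type argument along $\mathcal C_\e$ single out the first transversal mode and fix the sign $\beta<0$.
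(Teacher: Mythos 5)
Theorem \ref{t:main} is not proved in this paper at all: it is quoted from \cite{FT12} (see also Proposition \ref{p:conUeps}), and the present paper only reuses the underlying machinery (the Almgren--Garofalo--Lin frequency, the exterior quotient as in the proof of Lemma \ref{l:barU}, and positivity on small spheres from \cite[Corollary 1.3]{FT12}). Your outline for part (i) follows exactly the strategy announced for \cite{FT12}: normalization on $\Gamma^-_{\tilde k}$, a frequency function with uniform-in-$\e$ control of the transversal modes along the tube, doubling estimates, compactness and passage to the limit equation; structurally this matches. For part (ii) you take a somewhat different route: a Fourier/B\^{o}cher-type expansion of $U$ in half-sphere eigenfunctions combined with positivity, instead of pinning the limit of the (exterior) Almgren quotient to $N-1$ and then identifying the blow-up profile, which is how \cite{FT12} proceeds (the same mechanism reappears here in the proof of Lemma \ref{l:barU}). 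Under the stronger hypothesis \eqref{eq:p2} the expansion in $B^-_3$ is legitimate, and your use of \eqref{eq:54} to force a nontrivial singular part is precisely the paper's nondegeneracy mechanism.

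Two steps, however, are asserted rather than proved, and the first would fail as stated. First, the claim that \eqref{eq:13}, the convergence $u_\e\to u_0$ and ``the maximum principle in the cylinder and in $B^-_3$'' force $u_\e\ge 0$ near ${\mathbf 0}$ is not a maximum-principle statement: on the $D^-$ side of the tube and on $\Gamma^-_r$ there is no a priori sign for the boundary data, so a comparison argument has nothing to start from. This positivity is exactly \cite[Corollary 1.3]{FT12}, and its proof requires the sharp control of the first transversal mode across the left junction, i.e.\ the very estimate you deferred; without it both $\beta<0$ and your exclusion of higher modes are unsupported. Second, even granting $U\ge 0$ near ${\mathbf 0}$, ``$\psi_k$ changes sign and is more singular, hence $b_k=0$'' is not yet an argument when infinitely many modes may be present; one needs a quantitative comparison such as $|\psi_k|\le C_k\Psi^-$ on $\SN_-$ (Hopf boundary lemma on the half-sphere), which gives $\big|\int_{\SN_-}U(r\theta)\psi_k(\theta)\,d\sigma\big|\le C_k\int_{\SN_-}U(r\theta)\Psi^-(\theta)\,d\sigma=O(r^{1-N})$ as $r\to0^+$ and hence $b_k=0$ for all $k\ge2$; alternatively one argues through the frequency limit $N-1$ as in the proof of Lemma \ref{l:barU}. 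With these two points supplied, your scheme does deliver the theorem.
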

The aim of the present paper is twofold. On one hand, we will remove the
dependence on the subsequence in the previous statement. On the
other hand, the aforementioned theorem provides an implicit
normalization (i.e. $\int_{\Gamma^-_{\tilde
k}}u_{\e_{n_j}}^2d\sigma$) for the sequence of solutions to detect the limit
profile; we will determine the exact
behavior of this normalization,
thus providing an asymptotics of eigenfunctions, which will turn out
to be  independent of $\widetilde k\in (0,1)$. To this aim, we
proceed step by step, analyzing the asymptotics at succeeding
points, starting at the right junction where an initial
normalization is given by \eqref{convergenza_u_0} and \eqref{eq:13}.
In view of \cite[Section 4]{AT12}, the final behavior of
$\int_{\Gamma^-_{\tilde k}}u_{\e_{n_j}}^2d\sigma$ will depend on the
particular domain's shape, which will be recognizable by some
coefficients appearing in the leading term of the asymptotic
expansion. More precisely, information about the geometry may be
discerned in the dependence of the coefficients on the limit profiles
produced by a  blow-up at those points where a drastic
change of geometry occurs. 

We believe that from the  asymptotics of eigenfunctions proved in the
present paper an exact estimation of the rate of convergence of
eigenvalues on the perturbed domain to eigenvalues on the limit domain
could follow; this is the object of a current investigation.

Before stating our main result, let us introduce the functions
describing  the domain's geometry after blowing-up at each junction. 
Let us denote 
\begin{equation*}
 \widetilde D=D^+\cup T_1^-, \quad T_1^-=\{(x_1,x'):|x'|<1,\ x_1\leq1\}.
\end{equation*}
In \cite[Lemma 2.4]{FT12}, it is proved that there exists a unique
function $\Phi$ satisfying 
\begin{equation}\label{eq_Phi_1}
\begin{cases}
\int_{T_1^-\cup B^+_{R-1}}\Big(|\nabla \Phi(x)|^2
+|\Phi(x)|^{2^*}\Big)
\,dx<+\infty\text{ for all }R>2,\\[5pt]
-\Delta \Phi=0\text{ in a distributional sense in }\widetilde D,
\quad \Phi=0\text{ on }\partial \widetilde D,\\[5pt]
\int_{D^+}|\nabla (\Phi-(x_1-1))(x)|^2\,dx<+\infty.
\end{cases}
\end{equation}
Furthermore $\Phi>0$ in $\widetilde D$ and, by \cite[Lemma 2.9]{FT12}, there holds 
\begin{equation}\label{eq:Phiinfty}
  \Phi(x)=(x_1-1)^++O(|x-{\mathbf e}_1|^{1-N})\quad\text{in $D^+$ as
  }|x-{\mathbf e}_1|\to +\infty.
\end{equation}
Let us define
\begin{align}\label{eq:hatD}
 &\widehat D=D^-\cup T_1^+, \quad T_1^+=\{(x_1,x'):|x'|<1,\ x_1\geq0\},\\
\label{eq:T1}&\Sigma=\{x'\in\R^{N-1}:|x'|<1\},\quad 
T_1=\{(x_1,x'):x_1\in\R,\ |x'|<1\}.
\end{align}
We  denote as $\lambda_1(\Sigma)$  the first eigenvalue of the Laplace
operator on $\Sigma$ under null Dirichlet boundary conditions and
as $\psi_1^\Sigma(x')$  the corresponding positive
$L^2(\Sigma)$-normalized eigenfunction, so that 
\begin{equation}\label{eq:eigensigma}
\begin{cases}
-\Delta_{x'}\psi_1^\Sigma(x')=
\lambda_1(\Sigma)\psi_1^\Sigma(x'),&\text{in }\Sigma,\\
\psi_1^\Sigma=0,&\text{on }\partial\Sigma,
\end{cases}
\end{equation}
being $\Delta_{x'}=\sum_{j=2}^N\frac{\partial^2}{\partial x_j^2}$,
$x'=(x_2,\dots,x_N)$.  We define 
\begin{equation*}
h:T_1\to\R,
\quad h(x_1,x')=e^{\sqrt{\lambda_1(\Sigma)}x_1}\psi_1^\Sigma(x'),
\end{equation*}
and observe that $h\in C^2(T_1)\cap C^0(\overline{T_1})$ satisfies 
\begin{equation*}
\begin{cases}
-\Delta h=0,&\text{in }T_1,\\
h=0,&\text{on }\partial T_1.
\end{cases}
\end{equation*}
In \cite[Lemma 2.7]{FT12} it is proved that there exists a unique
function $\widehat \Phi:\widehat  D\to\R$ such that 
\begin{equation}\label{eq_Phi_hat}
\begin{cases}
\int_{D^-}\Big(|\nabla \widehat \Phi(x)|^2
+|\widehat \Phi(x)|^{2^*}\Big)
\,dx<+\infty,\\[5pt]
-\Delta \widehat \Phi=0\text{ in a distributional sense in }\widehat  D,
\quad \widehat \Phi=0\text{ on }\partial \widehat D,\\[5pt]
\int_{T_1}|\nabla (\widehat \Phi-h)(x)|^2\,dx<+\infty.
\end{cases}
\end{equation}
Furthermore 
\begin{equation}\label{eq:prop_hat_phi}
\widehat\Phi>0 \text{ in }\widehat D,
\quad \widehat\Phi\geq h \text{ in }T_1,
\quad \widehat\Phi-h\in
{\mathcal D}^{1,2}(\widehat D),
  \end{equation}
and, by \cite[Lemma 2.9]{FT12},
\begin{equation}\label{eq:stimahatPhiinfty}
  \widehat\Phi(x)=O(|x|^{1-N})\quad\text{as }|x|\to+\infty,\, x\in D^-.
\end{equation}
 A further limiting profile which plays a role in the asymptotic
behavior of eigenfunctions $u_\e$ at the singular junction is provided
in the following lemma.

\begin{Lemma}\label{l:barU}
If \eqref{eq:p}--\eqref{eq:13} hold, 
there exists a unique
function $\overline  U:D^-\to\R$  such that 
\begin{equation}\label{problema_U}
\begin{cases}
\overline  U\in \bigcup_{R>0}C^2(\overline{D^-\setminus
  B_R^-}),
&\overline U\in \bigcup_{R>0}\mathcal H_R^-,\\
-\Delta \overline U=\lambda_{k_0}(D^+) p \,\overline U, &\text{in
}D^-,\\
\overline U =0, &\text{on }\partial D^-\setminus\{{\mathbf 0}\},\\[5pt]
\lambda^{N-1}\overline U(\lambda
\theta)\mathop{\longrightarrow}\limits_{\lambda\to 0^+} \Psi^-(\theta),&\text{in }C^0({\mathbb S}^{N-1}_-).
\end{cases}
\end{equation}
\end{Lemma}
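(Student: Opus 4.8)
The plan is to construct $\overline U$ by solving an exterior-type problem in $D^-$ with a prescribed singularity at the origin, matching the leading term $\Psi^-(\theta)/|x|^{N-1}$. First I would note that the function $x\mapsto \Psi^-(x/|x|)\,|x|^{1-N} = -\theta_1/(\Upsilon_N|x|^{N-1})$ (written in polar coordinates $x=|x|\theta$) is harmonic in $D^-\setminus\{\mathbf 0\}$, vanishes on $\partial D^-\setminus\{\mathbf 0\}$, and realizes the prescribed boundary behavior at $0$: indeed $\lambda^{N-1}\bigl(-\theta_1/(\Upsilon_N\lambda^{N-1})\bigr)=\Psi^-(\theta)$ for every $\lambda>0$. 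Call this function $W_0$. Since $p$ vanishes on $B_3^-\cup\mathcal C_1\cup B_3^+$ by \eqref{eq:p2}, fix a cutoff $\eta\in C^\infty_{\rm c}(B_3^-)$ with $\eta\equiv 1$ on $B_2^-$, and look for $\overline U$ in the form $\overline U = \eta W_0 + V$. The correction $V$ must then solve
\begin{equation*}
\begin{cases}
-\Delta V-\lambda_{k_0}(D^+)\,p\,V = f:=\Delta(\eta W_0)+\lambda_{k_0}(D^+)\,p\,\eta W_0, &\text{in }D^-,\\
V=0,&\text{on }\partial D^-,
\end{cases}
\end{equation*}
where $f$ is supported in $\overline{B_3^-\setminus B_2^-}$ (on that annulus $\eta W_0$ is smooth, so $f\in L^\infty$ with compact support, in particular $f\in L^{2N/(N+2)}$ and $f$ is bounded), because on $B_2^-$ we have $\Delta(\eta W_0)=\Delta W_0=0$ and $p=0$, while outside $B_3^-$ we have $\eta W_0\equiv 0$.

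The heart of the matter is to solve this linear equation for $V\in\mathcal D^{1,2}(D^-)$. This is where assumption \eqref{eq:54}, $\lambda_{k_0}(D^+)\notin\sigma_p(D^-)$, enters: it guarantees that the operator $-\Delta-\lambda_{k_0}(D^+)p$ is invertible on $\mathcal D^{1,2}(D^-)$. Concretely, I would set up the weak formulation $\int_{D^-}\nabla V\cdot\nabla\phi\,dx-\lambda_{k_0}(D^+)\int_{D^-}pV\phi\,dx=\langle f,\phi\rangle$ for $\phi\in\mathcal D^{1,2}(D^-)$; by the compactness of the embedding $\mathcal D^{1,2}(D^-)\hookrightarrow L^2(D^-,p\,dx)$ (which follows from $p\in L^{N/2}$, using the argument of \cite[Lemma 1.1]{FT12} / the Sobolev inequality), the operator $V\mapsto \lambda_{k_0}(D^+)(-\Delta)^{-1}(pV)$ is compact on $\mathcal D^{1,2}(D^-)$, so Fredholm's alternative applies; the homogeneous equation has only the trivial solution precisely because $\lambda_{k_0}(D^+)\notin\sigma_p(D^-)$, hence a unique $V$ exists. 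Uniqueness of $\overline U$ overall then follows: if $\overline U_1,\overline U_2$ are two solutions, their difference has no singularity at $0$ (the leading terms cancel, so $\lambda^{N-1}(\overline U_1-\overline U_2)(\lambda\theta)\to 0$), one shows it lies in $\mathcal D^{1,2}(D^-)$ — using the Sobolev inequality \eqref{eq:sobHt-} together with the decay of the difference near $0$ to control $\int_{B_R^-}|\nabla(\overline U_1-\overline U_2)|^2$ — and then it solves the homogeneous problem, forcing it to vanish by \eqref{eq:54}.

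It remains to verify the regularity and the asymptotic profile claimed in \eqref{problema_U}. For regularity away from the origin, $\overline U$ solves $-\Delta\overline U=\lambda_{k_0}(D^+)p\overline U$ with $p\in C^1$, so elliptic regularity (interior plus boundary Schauder estimates, since $\partial D^-$ is flat) gives $\overline U\in C^2(\overline{D^-\setminus B_R^-})$ for every $R>0$; that $\overline U\in\mathcal H_R^-$ for every $R>0$ is immediate from $\overline U=\eta W_0+V$ with $V\in\mathcal D^{1,2}(D^-)$ and $\eta W_0$ having finite energy outside any $B_R^-$. For the blow-up limit, since $V\in\mathcal D^{1,2}(D^-)$ and $f$ is bounded with compact support, local elliptic estimates near $0$ give that $V$ is bounded near the origin (indeed $V\in C^{0,\alpha}$ locally, as $-\Delta V = f+\lambda_{k_0}(D^+)pV$ with bounded right-hand side near $0$ where $p=0$), so $\lambda^{N-1}V(\lambda\theta)\to 0$ uniformly on $\mathbb S^{N-1}_-$, whence $\lambda^{N-1}\overline U(\lambda\theta)=\lambda^{N-1}(\eta W_0)(\lambda\theta)+\lambda^{N-1}V(\lambda\theta)\to W_0(\theta)=\Psi^-(\theta)$ in $C^0(\mathbb S^{N-1}_-)$, for small $\lambda$ where $\eta\equiv 1$. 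The main obstacle is the solvability/Fredholm step: one must be careful that the compactness of $\mathcal D^{1,2}(D^-)\hookrightarrow L^2(D^-,p\,dx)$ genuinely holds on the unbounded domain $D^-$ — this is exactly where $p\in L^{N/2}(\R^N)$ is used, by splitting the integral into a large ball (compact Rellich embedding) and its complement (where $\|p\|_{L^{N/2}}$ is small), and it is the technical core that makes \eqref{eq:54} usable.
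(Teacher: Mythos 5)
Your existence construction ($\overline U=\eta W_0+V$ with $W_0(x)=-x_1/(\Upsilon_N|x|^N)$ and $V\in{\mathcal D}^{1,2}(D^-)$ obtained by Fredholm's alternative, using the compactness of $u\mapsto\int_{D^-}pu^2$ and assumption \eqref{eq:54}) is sound and genuinely different from the paper, which instead obtains existence as a by-product of Proposition \ref{p:conUeps}, i.e.\ from the compactness of the normalized eigenfunctions $U_\e$. That part of your argument is fine: since $p\equiv0$ on $B_3^-$, the correction $V$ is harmonic near the origin, hence bounded there by boundary regularity, so the blow-up of $\overline U$ is carried entirely by $\eta W_0$ and equals $\Psi^-$.

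The uniqueness step, however, has a genuine gap. You claim that because $\lambda^{N-1}(\overline U_1-\overline U_2)(\lambda\theta)\to0$, the difference $V=\overline U_1-\overline U_2$ ``has no singularity'' and lies in ${\mathcal D}^{1,2}(D^-)$. The hypothesis only gives the pointwise decay $V(x)=o(|x|^{1-N})$, and this does \emph{not} control $\int_{B_R^-}|\nabla V|^2$: a function behaving like $|x|^{1-N}/\log(1/|x|)$ is $o(|x|^{1-N})$ yet has infinite Dirichlet energy near $0$ (a Caccioppoli estimate on dyadic annuli gives $\int_{A_j}|\nabla V|^2=o(2^{jN})$, which is not summable), and the Sobolev inequality \eqref{eq:sobHt-} you invoke runs in the wrong direction (it bounds $\|V\|_{L^{2^*}}$ by the energy, not conversely). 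Upgrading the $o(|x|^{1-N})$ information to membership in the energy space is precisely where the work lies: the paper spends essentially its whole proof on this, running an exterior Almgren monotonicity argument for ${\mathcal N}_V$ (using Lemma \ref{lemma0_u} and the Poincar\'e inequality \eqref{eq:poincare}) to show $\lim_{r\to0^-}{\mathcal N}_V(r)=N-1$, and then invoking the classification of blow-up profiles from \cite{FT12} to force $V\equiv0$. In your special setting the gap could be closed more cheaply: since $p\equiv0$ on $B_3^-$, $V$ is harmonic in the punctured half-ball and vanishes on the flat boundary, so after odd reflection the Laurent expansion in spherical harmonics, $V=\sum_k(a_kr^k+b_kr^{2-N-k})Y_k$, together with $V=o(r^{1-N})$ and oddness in $x_1$, kills every $b_k$ and shows the singularity is removable. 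But some such argument must be supplied; note also that the paper's route works without \eqref{eq:p2} (see the Remark following its proof), whereas any repair based on harmonicity near the origin does not.
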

Our main result is the following theorem describing the behavior as $\e\to 0^+$ of $u_\e$  at the junction
${\mathbf 0}=(0,\dots,0)$.  

\begin{Theorem}\label{t:teorema_principale}
 Let us assume
  \eqref{eq:p}--\eqref{eq:13} hold
and let $u_\e$ as in \eqref{problema}.
Then 
\begin{equation}
  \frac{ e^{\frac{\sqrt{\lambda_1(\Sigma)}}{\e}}\, u_\e}{\e^{N}}
  \to\bigg(\int_{{\mathbb S}^{N-1}_-}
\widehat\Phi(\theta)\Psi^-(\theta)d\sigma\bigg) 
  \bigg(\int_{\Sigma}\Phi(1,x')\psi_1^\Sigma(x')dx'\bigg)
\left(\frac{\partial u_0}{\partial x}({\mathbf e}_1)\right)
 \overline U
\end{equation}
as $\e \to 0^+$ strongly in $\mathcal H_t^-$ for every $t>0$ and in
$C^2(\overline{B_{t_2}^-\setminus B_{t_1}^-})$ for all $0<t_1<t_2$,
where $\Phi$ and  $\widehat \Phi$ are defined in 
 \eqref{eq_Phi_1} and \eqref{eq_Phi_hat}
respectively, and $\overline U$ is as in Lemma \ref{l:barU}.
\end{Theorem}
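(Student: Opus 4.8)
The plan is to track $u_\e$ as it leaves the reservoir $D^+$, where $u_\e\to u_0$, crosses the channel $\mathcal C_\e$, and enters $D^-$, by a chain of three matched blow-ups --- at the right junction $\mathbf{e}_1$, along the tube, and at the left junction $\mathbf{0}$ --- and to recognise the multiplicative constant produced at each stage as one of the three factors in the statement. All three limit problems are harmonic thanks to the strengthened assumption \eqref{eq:p2}, which makes $u_\e$ harmonic on a fixed neighbourhood of $B_3^-\cup\mathcal C_1\cup B_3^+$ and hence permits separation of variables along the tube; the Almgren-type monotonicity machinery of \cite{FT12} (the control of the transversal frequency along $\mathcal C_\e$) is what turns the formal matching into a rigorous one. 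One first needs Lemma \ref{l:barU}: its proof is the standard construction of the ``Green function with prescribed $\Psi^-(x/|x|)/|x|^{N-1}$ singularity'' for $-\Delta-\lambda_{k_0}(D^+)p$ on $D^-$, existence and uniqueness following from \eqref{eq:54}.

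For the first matching, let $v_\e(y)=\e^{-1}u_\e(\mathbf{e}_1+\e y)$ on $\e^{-1}(\Omega^\e-\mathbf{e}_1)$, which exhausts $\widetilde D-\mathbf{e}_1$ as $\e\to0^+$. The $v_\e$ are harmonic on balls of radius $\sim\e^{-1}$ (by \eqref{eq:p2}), and since $u_\e\to u_0$ in $\Di$ and, by uniform convergence of eigenfunctions, in $C^2_{\rm loc}(\overline{D^+})$, while $u_0$ has a first-order zero at $\mathbf{e}_1$ with $\nabla u_0(\mathbf{e}_1)=\frac{\partial u_0}{\partial x_1}(\mathbf{e}_1)\mathbf{e}_1$, a blow-up argument gives $v_\e\to\frac{\partial u_0}{\partial x_1}(\mathbf{e}_1)\,\Phi(\mathbf{e}_1+\cdot)$ (in $C^2_{\rm loc}$ and in the natural energy spaces), the limit being singled out by the uniqueness in \eqref{eq_Phi_1} and by its growth $\frac{\partial u_0}{\partial x_1}(\mathbf{e}_1)(x_1-1)$ in $D^+$, cf. \eqref{eq:Phiinfty}. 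Separating variables for $\Phi$ on the half-tube $T_1^-$ yields $\Phi(x_1,x')=\big(\int_\Sigma\Phi(1,\cdot)\psi_1^\Sigma\big)e^{\sqrt{\lambda_1(\Sigma)}(x_1-1)}\psi_1^\Sigma(x')+O\big(e^{\sqrt{\lambda_2(\Sigma)}(x_1-1)}\big)$ as $x_1\to-\infty$. For the second matching, $u_\e$ is harmonic in $\mathcal C_\e$ and vanishes on $\{|x'|=\e\}$, so $u_\e(x_1,x')=\sum_{j\geq1}\big(a_j(\e)e^{\sqrt{\lambda_j(\Sigma)}x_1/\e}+b_j(\e)e^{-\sqrt{\lambda_j(\Sigma)}x_1/\e}\big)\psi_j^\Sigma(x'/\e)$; the sharp frequency control of \cite{FT12} forces $a_1(\e)$ to dominate (the $b_j(\e)$ and the $a_j(\e)$, $j\geq2$, being negligible with respect to it), and matching with the previous step on the cross-section $\{x_1=1\}$ gives $a_1(\e)=(1+o(1))\,\e\,e^{-\sqrt{\lambda_1(\Sigma)}/\e}\,\frac{\partial u_0}{\partial x_1}(\mathbf{e}_1)\int_\Sigma\Phi(1,x')\psi_1^\Sigma(x')\,dx'$. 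Hence, on compact subsets of the rescaled left half-tube, $\e^{-1}e^{\sqrt{\lambda_1(\Sigma)}/\e}u_\e(\e z)\to c_1\,h(z)$, where $c_1:=\frac{\partial u_0}{\partial x_1}(\mathbf{e}_1)\int_\Sigma\Phi(1,x')\psi_1^\Sigma(x')\,dx'$ and $h(x_1,x')=e^{\sqrt{\lambda_1(\Sigma)}x_1}\psi_1^\Sigma(x')$.

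For the third matching, put $w_\e(z):=\e^{-1}e^{\sqrt{\lambda_1(\Sigma)}/\e}u_\e(\e z)$ on $\e^{-1}\Omega^\e$, which exhausts $\widehat D$. By the previous step the trace of $w_\e$ on the tube tends to $c_1 h$; with uniform energy bounds in $D^-$, interior estimates, and the uniqueness in \eqref{eq_Phi_hat} this gives $w_\e\to c_1\widehat\Phi$ in $\mathcal H_t^-$ for every $t>0$ and in $C^2_{\rm loc}(\widehat D)$. Since $\widehat\Phi$ is harmonic in $D^-$ and vanishes on $\partial D^-\cap\{|x|>1\}$, the function $g(R):=\int_{\SN_-}\widehat\Phi(R\theta)\Psi^-(\theta)\,d\sigma(\theta)$ satisfies the Cauchy--Euler equation $g''+\frac{N-1}{R}g'-\frac{N-1}{R^2}g=0$ for $R>1$ --- integrate $\Delta\widehat\Phi=0$ against $\Psi^-$ over $\SN_-$, use \eqref{eq:eigenpsi-}, and note that the boundary terms vanish because $\Psi^-$ and, for $R>1$, $\widehat\Phi(R\,\cdot)$ both vanish on $\partial\SN_-$ --- hence, being $O(R^{1-N})$ by \eqref{eq:stimahatPhiinfty}, equals $g(1)R^{1-N}$; since the other angular modes of $\widehat\Phi$ decay strictly faster, $\widehat\Phi(x)=\big(\int_{\SN_-}\widehat\Phi\,\Psi^-\,d\sigma\big)|x|^{1-N}\Psi^-(x/|x|)+o(|x|^{1-N})$ as $|x|\to\infty$, $x\in D^-$. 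Undoing the last blow-up, $W_\e:=\e^{-N}e^{\sqrt{\lambda_1(\Sigma)}/\e}u_\e$, restricted to $D^-$, develops at $\mathbf{0}$ the leading singularity $c_1\big(\int_{\SN_-}\widehat\Phi\,\Psi^-\,d\sigma\big)\,|x|^{1-N}\Psi^-(x/|x|)$; since $W_\e$ solves $-\Delta W_\e=\lambda_\e p\,W_\e$ in $D^-$ with null data on $\partial D^-\setminus\{\mathbf{0}\}$, the a priori bounds and compactness of \cite{FT12} (yielding, along subsequences, a solution of \eqref{problema_U} with exactly this leading coefficient) together with the uniqueness in Lemma \ref{l:barU} force $W_\e\to c_1\big(\int_{\SN_-}\widehat\Phi\,\Psi^-\,d\sigma\big)\,\overline U$ in $\mathcal H_t^-$ for every $t>0$ and in $C^2(\overline{B_{t_2}^-\setminus B_{t_1}^-})$ for all $0<t_1<t_2$. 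As $c_1\big(\int_{\SN_-}\widehat\Phi\,\Psi^-\,d\sigma\big)$ is precisely the product in the statement, the theorem follows; the limit being unique, the convergence holds for the full family $\e\to0^+$ and is manifestly independent of $\widetilde k$, which in turn identifies the profile $U$ of Theorem \ref{t:main} with a multiple of $\overline U$.

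The main obstacle is the second matching: transporting $u_\e$ across the channel with the \emph{sharp} weight $e^{-\sqrt{\lambda_1(\Sigma)}/\e}$ and the \emph{sharp} constant. This is exactly where the monotonicity formula of \cite{FT12} is indispensable: without a quantitative bound on the transversal frequency one cannot exclude that $u_\e$ carries a non-negligible share of its energy in the higher cross-sectional modes, or in the exponential growing toward $D^-$, which would corrupt both the rate and the constant. A second, more bookkeeping, difficulty is the consistency of the three blow-ups, carried out on the three different limit domains $\widetilde D$, $T_1$ and $\widehat D$: one needs estimates uniform in $\e$ on overlapping intermediate scales, so that the far field of each limiting profile is the near field of the next. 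The remaining ingredients --- Lemma \ref{l:barU}, the Cauchy--Euler identity isolating the leading coefficient of $\widehat\Phi$, and the concluding identification in $D^-$ (in the spirit of \cite[Section 4]{AT12}) --- are comparatively routine.
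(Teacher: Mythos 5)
Your plan reproduces the paper's own strategy (matched blow-ups at the right junction, in the corridor, and at the left junction, with the three constants read off from projections onto $\Psi^+$, $\psi_1^\Sigma$, $\Psi^-$ and the uniqueness of $\Phi$, $\widehat\Phi$, $\overline U$), but the second matching, which you yourself single out as the crux, is carried out incorrectly. The claim that the coefficients $b_j(\e)$ of the decaying exponentials are negligible with respect to $a_1(\e)$ fails precisely where you use it, namely at distance $O(\e)$ from the left junction. In the paper's notation for the first-mode projection, $\phi_\e(t)=A_\e e^{\sqrt{\lambda_1(\Sigma)}(t-1)/\e}+B_\e e^{-\sqrt{\lambda_1(\Sigma)}(t-1)/\e}$ as in \eqref{eq_Laura_phi}, one has $A_\e/\e\to c_1:=\frac{\partial u_0}{\partial x_1}({\mathbf e}_1)\int_\Sigma\Phi(1,x')\psi_1^\Sigma(x')\,dx'$, while the reflected coefficient satisfies $B_\e e^{\sqrt{\lambda_1(\Sigma)}/\e}/\sqrt{\widetilde H_\e(\e)}\to \hat\phi(0)-\widehat C$ (see \eqref{terzo_passo_lemma_asintotico_H_epsilon_epsilon}), a limit which is in general nonzero; since $\sqrt{\widetilde H_\e(\e)}\sim (c_1/\widehat C)\,\e\,e^{-\sqrt{\lambda_1(\Sigma)}/\e}$ by Proposition \ref{lemma_asintotico_H_epsilon_epsilon}, the reflected first mode is of exactly the same order $\e\,e^{-\sqrt{\lambda_1(\Sigma)}/\e}$ as the transmitted one near $x_1=0$. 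Consequently your intermediate statement that $\e^{-1}e^{\sqrt{\lambda_1(\Sigma)}/\e}u_\e(\e z)\to c_1 h(z)$ on compact subsets of the rescaled left half-tube is wrong: the correct limit there is $c_1\widehat\Phi(z)$, whose tube component is $h$ plus a decaying correction that is \emph{not} negligible at scale one (cf. Lemma \ref{l:ashatPhi} and \eqref{eq:defphipicchat}); it is exactly this reflected component that makes $\widehat\Phi\neq h$ in $T_1^+$.

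This matters because the amplitude in front of $\widehat\Phi$ --- hence the final constant of the theorem --- is precisely what the second matching must deliver, and it cannot be read off from the trace of $w_\e$ on compact portions of the rescaled tube (where the limit is not $c_1h$), nor at $x_1\sim1$ without uniform control on intermediate scales. The matching has to be performed with both coefficients $A_\e$ and $B_\e$ tracked quantitatively: this is what the paper does in Proposition \ref{asintotico_H_epsilon_canale} (showing $C_\e\to0$, $A_\e/\e\to c_1$, and negligibility of $B_\e$ only at \emph{fixed} $x_0\in(0,1)$) and in Proposition \ref{lemma_asintotico_H_epsilon_epsilon}, where the $B_\e$-contribution $\hat\phi(0)-\widehat C$ reappears at scale $\e$ and is cancelled only after being matched against the same quantity computed from $\widehat\Phi$ through \eqref{secondo_passo_lemma_asintotico_H_epsilon_epsilon}. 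A bare appeal to ``the sharp frequency control of \cite{FT12}'' does not produce this two-sided bookkeeping, so as written the determination of the amplitude transferred across the tube, and with it the factor $c_1\int_{\SN_-}\widehat\Phi\,\Psi^-\,d\sigma$, is not justified. (A smaller instance of the same optimism: the uniqueness in Lemma \ref{l:barU} is not a routine ``Green function with prescribed singularity'' fact; the paper proves it through an Almgren-type monotonicity argument for the difference of two solutions, which is where the nondegeneracy assumption \eqref{eq:54} genuinely enters.)
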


The paper is organized as follows. In section \ref{sec:indep-subs}  we improve 
Theorem \ref{t:main} ruling out dependance on subsequences and prove
Lemma \ref{l:barU}
completely classifying the limit profile at the left
junction. In section \ref{sec:asympt-behav-norm} we describe the
asymptotic behavior of the normalization of Theorem \ref{t:main} and
prove Theorem \ref{t:teorema_principale}; to
this aim we first evaluate the asymptotic
behavior of the denominator of the Almgren quotient at a fixed point
in the corridor, then at $\e$-distance from the left junction in the
corridor, and finally at a fixed distance from the left junction in
$D^-$.


\section{Independence of the subsequence}\label{sec:indep-subs}

A deep insight into \cite{FT12} highlights how the dependence on the subsequences
in Theorem \ref{t:main} is \textit{a priori} given by two different
facts: on one hand, the convergence to the limit profile in the
blow-up analysis at the right junction  up to subsequences and, on the other hand, the possible multiplicity of the
limit profiles at the left junction (named $U$ throughout \cite{FT12}).
In this section we rule out both such occurrences.

\subsection{Independence in the blow-up limit on the right}

The first improvement concerns Lemma 4.1 in \cite{FT12}. We recall
some notation for the sake of clarity.

\noindent Let us define
\begin{align}\label{eq:84}
\widetilde u_\e:\widetilde \Omega^\e\to\R,\quad
\widetilde u_\e(x)=\frac1\e u_\e\big({\mathbf e}_1
+\e(x-{\mathbf e}_1
)\big),
\end{align}
where
\begin{equation}\label{eq:85}
\widetilde \Omega^\e:={\mathbf e}_1+\frac{\Omega^\e-{\mathbf
    e}_1}{\e} =\{x\in\R^N:{\mathbf e}_1 +\e(x-{\mathbf e}_1)\in\Omega^\e\}.
\end{equation}
For
all $R>1$, let  
$\mathcal H^+_R$  be the completion of 
$C^\infty_{\rm c}\big(\big((-\infty,1)\times\R^{N-1}\big)\cup \overline{B_R^+}\big)$
with respect to the norm $\big(\int_{((-\infty,1]\times\R^{N-1})\cup
  B_R^+}|\nabla v|^2dx\big)^{1/2}$, i.e.  $\mathcal H_R^+$ is the space of functions with
finite energy in $((-\infty,1]\times\R^{N-1})\cup \overline{B_R^+}$ vanishing on
$\{(1,x')\in \R\times\R^{N-1}:|x'|\geq R\}$.

\begin{Lemma}\label{l:tilde_u_eps_bound} {\rm (\cite[Lemma 4.1 and Corollary
   4.4]{FT12})}
 For every sequence $\e_{n}\to 0^+$ there exist a subsequence
 $\{\e_{n_k}\}_k$ and a constant $\widetilde C>0$ such that
 $\widetilde u_{\e_{n_k}}\to\widetilde C \Phi$ strongly in $\mathcal
 H_R^+$ for every $R>2$ and  in  $C^2(\overline{B_{r_2}^+\setminus
  B_{r_1}^+})$ for all $1< r_1<r_2$, where $\Phi$ is the unique solution
 to  problem \eqref{eq_Phi_1}.
\end{Lemma}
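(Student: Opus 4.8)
The plan is to combine a uniform energy estimate on the rescaled functions $\widetilde u_\e$ (the genuinely hard point, which in \cite{FT12} rests on the Almgren-type monotonicity formula) with a compactness argument whose weak limit is forced to be a multiple of $\Phi$ by the uniqueness statement in \cite[Lemma 2.4]{FT12}, the multiplicative constant being read off from the simple zero of $u_0$ at $\mathbf e_1$.

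First I would record the equation satisfied by $\widetilde u_\e$: from \eqref{problema} and \eqref{eq:84}, $-\Delta\widetilde u_\e=\e^2\lambda_\e\, p_\e\,\widetilde u_\e$ in $\widetilde\Omega^\e$ and $\widetilde u_\e=0$ on $\partial\widetilde\Omega^\e$, where $p_\e(x)=p\big(\mathbf e_1+\e(x-\mathbf e_1)\big)$. By \eqref{eq:p2} the weight vanishes on $B^+_3\cup\mathcal C_1$, hence $p_\e\equiv0$ on $B(\mathbf e_1,3/\e)\cap\widetilde\Omega^\e$; consequently, for every fixed $R>2$ and every small $\e$, $\widetilde u_\e$ is harmonic in $\widetilde\Omega^\e\cap B(\mathbf e_1,R)$ and vanishes on the part of $\partial\widetilde\Omega^\e$ lying in that ball (the lateral wall of the rescaled tube together with the hyperplane piece $\{x_1=1,\ |x'|\ge1\}$), while $\widetilde\Omega^\e\cap B(\mathbf e_1,R)\to\widetilde D\cap B(\mathbf e_1,R)$ as $\e\to0^+$.

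The core step — and, I expect, the main obstacle — is the a priori bound: for every $R>2$ one has $\int_{((-\infty,1]\times\R^{N-1})\cup B_R^+}|\nabla\widetilde u_\e|^2\,dx\le C\,R^N$ for all small $\e$, with $C$ independent of $\e$ and $R$. On the ``wide'' part $B_R^+$ one writes, using \eqref{convergenza_u_0}, interior and boundary elliptic regularity and the vanishing of $p$ near $\mathbf e_1$, that $\widetilde u_\e=\frac{\partial u_0}{\partial x_1}(\mathbf e_1)\,(x_1-1)+w_\e$, the linear profile being the blow-up of $u_0$ at its simple zero $\mathbf e_1$ (recall $u_0\equiv0$ on $\partial D^+$, so $\nabla u_0(\mathbf e_1)$ points in the $x_1$-direction), while the bounded remainder $w_\e$ — which carries the nontrivial geometry — is estimated through the Almgren frequency formula and the doubling inequalities of \cite{FT12} controlling the local Dirichlet energy of $u_\e$ near $\mathbf e_1$ at scale $\e$. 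On the ``thin'' part, i.e. the half-cylinder $\{x_1\le1\}$ rescaled from $\mathcal C_\e$ and from the mouth of $D^-$, one expands $\widetilde u_\e$ in the $L^2(\Sigma)$-Dirichlet eigenbasis and uses harmonicity to express every Fourier coefficient explicitly in terms of its values on the cross-sections $\{x_1=1\}$ and $\{x_1=1-1/\e\}$; the trace on the first is controlled by the estimate on $B_R^+$, whereas the trace on the second sits at the left junction $\mathbf 0$ and is controlled, again, by the monotonicity estimates of \cite{FT12} that bound the vanishing rate of $u_\e$ there. Propagating such a bound along a tube of diverging length $1/\e$ is the delicate part.

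Granted the bound, the remainder is routine. By reflexivity and a diagonal extraction over $R\in\N$ we get a subsequence with $\widetilde u_{\e_{n_k}}\weakly V$ in $\mathcal H_R^+$ for every $R>2$ and, by Rellich's theorem, strongly in $L^2_{\rm loc}$; passing to the limit in the equation and the boundary condition, $V$ is harmonic in $\widetilde D$, vanishes on $\partial\widetilde D$, has finite Dirichlet energy on $T_1^-\cup B^+_{R-1}$ for every $R$, and inherits $\int_{B_R^+}|\nabla V|^2\le C R^N$, which by subharmonicity of $|\nabla V|^2$ forces $|\nabla V|$ to be bounded, hence $V$ to grow at most linearly in $D^+$. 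A Phragm\'en--Lindel\"of argument based on \eqref{eq:Phiinfty}, together with the uniqueness part of \cite[Lemma 2.4]{FT12}, then gives $V=\widetilde C\Phi$ for some $\widetilde C\ge0$; matching the coefficient of the linear term identifies $\widetilde C=\frac{\partial u_0}{\partial x_1}(\mathbf e_1)>0$ by \eqref{eq:13} (so $\widetilde C$ is in fact independent of the subsequence). Finally, testing the equation for $\widetilde u_{\e_{n_k}}-\widetilde C\Phi$ against this difference cut off to $B(\mathbf e_1,R)$ — the rescaled weight term tending to zero and the boundary terms being handled on an overlap annulus where convergence is already known — upgrades the weak convergence to strong convergence in each $\mathcal H_R^+$; and since on $\overline{B^+_{r_2}\setminus B^+_{r_1}}$ with $1<r_1<r_2$ the functions are harmonic with zero data on a flat portion of the boundary, interior and boundary Schauder estimates bootstrap the $L^2_{\rm loc}$ convergence to convergence in $C^2(\overline{B^+_{r_2}\setminus B^+_{r_1}})$.
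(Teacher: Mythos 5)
You should first note that the paper does not actually prove this statement: Lemma \ref{l:tilde_u_eps_bound} is imported verbatim from \cite[Lemma 4.1 and Corollary 4.4]{FT12}, so there is no internal proof to match, and your own sketch likewise defers the genuinely hard point (the $\e$-uniform energy bound for $\widetilde u_\e$, including along the tube of length $1/\e$) to the Almgren monotonicity and doubling estimates of \cite{FT12}. Up to that deferral, the compactness-plus-classification skeleton (diagonal extraction, harmonic limit in $\widetilde D$, uniqueness from \cite[Lemma 2.4]{FT12}, local elliptic estimates for the $C^2$ convergence) is consistent with what the quoted result requires.

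However, there is a genuine gap where you claim to identify the constant. You write $\widetilde u_\e=\frac{\partial u_0}{\partial x_1}(\mathbf e_1)(x_1-1)+w_\e$ with a bounded remainder, justified by \eqref{convergenza_u_0} and ``interior and boundary elliptic regularity'', and then conclude $\widetilde C=\frac{\partial u_0}{\partial x_1}(\mathbf e_1)>0$ by ``matching the coefficient of the linear term''. This does not work as stated: the convergence $u_\e\to u_0$ in ${\mathcal D}^{1,2}$ lives at scale $O(1)$, while $\widetilde u_\e$ probes $u_\e$ at scale $\e$ around $\mathbf e_1$, exactly where the boundary condition of $u_\e$ differs from that of $u_0$ (the disc of radius $\e$ where the tube attaches), so uniform $C^1$ control of $u_\e$ down to scale $\e$ at $\mathbf e_1$ is precisely what cannot be obtained from \eqref{convergenza_u_0} plus standard regularity; interchanging the limits $\e\to0$ and $|x-\mathbf e_1|\to\infty$ is the crux. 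The same issue undermines your claim that $\widetilde C>0$ (equivalently that the blow-up limit is nontrivial), which needs a quantitative lower bound at scale $\e$, again of monotonicity/doubling type, not only the upper bound. Indeed, the whole point of Section \ref{sec:indep-subs} of the present paper is that the lemma from \cite{FT12} gives only a subsequence-dependent $\widetilde C>0$, and the identification $\widetilde C=\frac{\partial u_0}{\partial x_1}(\mathbf e_1)$ requires the separate argument of Lemma \ref{lemma_1} and Proposition \ref{p:tildeC}, which transfers information from the fixed scale $r=1$ (where $u_\e\to u_0$) to the scale $R\e$ (where $\widetilde u_\e$ converges) through the explicit ODE satisfied by $r\mapsto\int_{\SN_+}u_\e(\mathbf e_1+r\theta)\Psi^+(\theta)\,d\sigma$, exploiting that $p$ vanishes near the junction. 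If your two-line matching were valid, that entire section would be unnecessary; as written, it assumes the conclusion of Proposition \ref{p:tildeC} rather than proving it.
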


\noindent We are now able to prove that the limit $\widetilde C \Phi$ does not depend on the subsequence.

\begin{Lemma}\label{lemma_1}
  Let $\Phi$, $\Psi^+=\frac{\theta_1}{\Upsilon_N}$, and $\Upsilon_N$ be
  as in \eqref{eq_Phi_1}, \eqref{eq:eigenpsi+}, and
  \eqref{eq:upsilonN} respectively. Then, for every $r > 1$, the
  following identity holds true
\begin{multline*} 
  \dfrac{r^N}{r^{N}-1} \left(\dfrac{1}{r}\int_{\SN_+}\Phi(\mathbf
    e_1+r\theta)\Psi^+(\theta)\,d\sigma(\theta) -
    \int_{\SN_+}\Phi(\mathbf
    e_1+\theta)\Psi^+(\theta)\,d\sigma(\theta)\right) \\
  = \Upsilon_N - \int_{\SN_+}\Phi(\mathbf
  e_1+\theta)\Psi^+(\theta)\,d\sigma(\theta).
\end{multline*}
\end{Lemma}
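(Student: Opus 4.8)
The plan is to reduce the whole identity to an elementary one‑variable statement about the first angular Fourier coefficient of $\Phi$. For $t>0$ put
\[
\varphi(t):=\int_{\SN_+}\Phi(\mathbf{e}_1+t\theta)\,\Psi^+(\theta)\,d\sigma(\theta),
\]
so that the asserted equality is precisely a relation among $\varphi(1)$, $\varphi(r)$ and $\Upsilon_N$. Two geometric remarks are used throughout: (a) for $\theta\in\SN_+$ the point $\mathbf{e}_1+t\theta$ has first coordinate $1+t\theta_1>1$, hence lies in the half-space $D^+=\{x_1>1\}$, where $\Phi$ is harmonic; (b) for $t\ge1$ and $\theta\in\partial\SN_+$ (so $\theta_1=0$, $|\theta'|=1$) one has $\mathbf{e}_1+t\theta=(1,t\theta')\in\{x_1=1,\ |x'|=t\ge1\}\subset\partial\widetilde D$, where $\Phi=0$. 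In particular, for $t$ in a compact subinterval of $(1,+\infty)$ the set $\{\mathbf{e}_1+t\theta:\theta\in\overline{\SN_+}\}$ touches $\partial D^+$ only at points with $|x'|=t>1$, hence stays away from the edge $\{x_1=1,\ |x'|=1\}$; since $\Phi$ is smooth up to $\partial D^+$ off that edge (interior regularity on $\{x_1=1,\ |x'|<1\}\subset\widetilde D$, and reflection across $\{x_1=1,\ |x'|>1\}$ where $\Phi$ vanishes), one may differentiate $\varphi$ under the integral sign there.

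First I would derive an Euler equation for $\varphi$. Writing $\Delta\Phi=0$ in polar coordinates $x=\mathbf{e}_1+\rho\theta$ as $\partial_{\rho\rho}\Phi+\tfrac{N-1}{\rho}\partial_\rho\Phi+\tfrac1{\rho^2}\Delta_{\SN}\Phi=0$, I would multiply by $\Psi^+$, integrate over $\SN_+$, and apply Green's second identity on $\SN_+$ together with \eqref{eq:eigenpsi+}; for $t>1$ the boundary contributions over $\partial\SN_+$ drop because both $\Phi(\mathbf{e}_1+t\,\cdot\,)$ and $\Psi^+$ vanish there. This yields
\[
\varphi''(t)+\frac{N-1}{t}\,\varphi'(t)-\frac{N-1}{t^2}\,\varphi(t)=0,\qquad t>1.
\]
Since $\alpha^2+(N-2)\alpha-(N-1)=(\alpha-1)(\alpha+N-1)$, the corresponding exponents are $1$ and $1-N$ (distinct, as $N\ge3$), so $\varphi(t)=at+bt^{1-N}$ on $(1,+\infty)$ for some $a,b\in\R$, and, by continuity, this representation persists at $t=1$.

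Next I would identify $a$ from \eqref{eq:Phiinfty}. Since $\Phi(x)=(x_1-1)^+ +O(|x-\mathbf{e}_1|^{1-N})$ uniformly as $|x-\mathbf{e}_1|\to+\infty$ in $D^+$, choosing $x=\mathbf{e}_1+t\theta$ with $\theta\in\SN_+$ gives $\Phi(\mathbf{e}_1+t\theta)=t\theta_1+O(t^{1-N})=\Upsilon_N\,t\,\Psi^+(\theta)+O(t^{1-N})$ uniformly in $\theta$; multiplying by $\Psi^+$, integrating over $\SN_+$, and using $\int_{\SN_+}(\Psi^+)^2\,d\sigma=1$, we obtain $\varphi(t)=\Upsilon_N\,t+O(t^{1-N})$, hence $a=\Upsilon_N$. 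It then only remains to substitute $t=1$ and $t=r$ into $\varphi(t)=\Upsilon_N t+bt^{1-N}$: since $\tfrac1r\varphi(r)-\varphi(1)=b(r^{-N}-1)$, we get
\[
\frac{r^N}{r^N-1}\Big(\tfrac1r\varphi(r)-\varphi(1)\Big)=-b=\Upsilon_N-\varphi(1),
\]
which is exactly the claimed identity.

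The only genuinely delicate points — hence the main (mild) obstacle — are the justification of differentiation under the integral sign and of the integration by parts on $\SN_+$ up to $\partial\SN_+$, which rest on the boundary regularity of $\Phi$ away from the edge $\{x_1=1,\ |x'|=1\}$, together with the uniformity in $\theta$ of the expansion \eqref{eq:Phiinfty}. The rest is the elementary solution of the Euler equation and the final algebra.
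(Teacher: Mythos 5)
Your proof is correct and follows essentially the same route as the paper: you project $\Phi$ onto $\Psi^+$ over spheres centered at $\mathbf e_1$, obtain the ODE (your Euler equation is just the expanded form of the paper's $\bigl(r^{N+1}(v/r)'\bigr)'=0$), and use \eqref{eq:Phiinfty} to identify the coefficient of the linear mode as $\Upsilon_N$ before doing the final algebra. The only difference is cosmetic — you write the general solution $at+bt^{1-N}$ and spell out the regularity justifications, while the paper integrates the ODE directly — so no further comment is needed.
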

\begin{proof}
For all $r>1$, let us define
\begin{equation*}
  v(r)=\int_{\SN_+}\Phi(\mathbf e_1+r\theta)\Psi^+(\theta)\,d\sigma(\theta).  
\end{equation*}
From \eqref{eq_Phi_1}, $v$ satisfies
\begin{equation}\label{eq_Veronica_semisfere}
 \left(r^{N+1}\left(\dfrac{v}{r}\right)'\right)'=0,\quad\text{in }(1,+\infty),
\end{equation}
hence, by integration, there exists $C\in\R$ such that 
\begin{equation}\label{eq:vdivr}
 \dfrac{v(r)}{r} = v(1)+\dfrac{C}{N}(1-r^{-N}),\quad\text{for all }r\in(1,+\infty).
\end{equation}
From \eqref{eq:Phiinfty} we deduce that $\frac{v(r)}{r}\to 
\int_{\SN_+}\theta_1\Psi^+(\theta)\,d\sigma(\theta)=\frac1{\Upsilon_N}
\int_{\SN_+}\theta_1^2\,d\sigma(\theta)=\Upsilon_N$ as $r\to+\infty$. 
Hence, passing to the limit as $r\to+\infty$  in \eqref{eq:vdivr}, we
obtain
that $\Upsilon_N=v(1)+\frac{C}{N}$,
i.e. $\frac{C}{N}=\Upsilon_N-v(1)$. Then \eqref{eq:vdivr} becomes
\begin{equation*}
 \dfrac{v(r)}{r} = v(1)r^{-N}+\Upsilon_N(1-r^{-N}),\quad\text{for all }r\in(1,+\infty),
\end{equation*}
which directly gives the conclusion.
\end{proof}

\begin{Proposition}\label{p:tildeC}
Let $\{\e_{n}\}_n$, 
 $\{\e_{n_k}\}_k$, and $\widetilde C$ be as in Lemma
 \ref{l:tilde_u_eps_bound}. Then 
 \begin{equation*}
   \widetilde C=\frac{\partial u_0}{\partial x_1}(\mathbf e_1).
 \end{equation*}
\end{Proposition}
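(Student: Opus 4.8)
The plan is to identify the constant $\widetilde C$ by matching the limit $\widetilde C\,\Phi$ of the rescaled eigenfunctions $\widetilde u_{\e_{n_k}}$ with the known first-order behaviour of $u_0$ at $\mathbf e_1$, coming from \eqref{convergenza_u_0} and \eqref{eq:13}. The key point is that on an \emph{intermediate} scale — a fixed small ball $B_r^+$ in the half-space $D^+$, viewed before rescaling but at the level where $p\equiv 0$ by \eqref{eq:p2} — the function $u_\e$ is essentially harmonic and close to $u_0$, while after the $\e$-rescaling \eqref{eq:84} the same region (out to radius $r/\e$) is governed by $\widetilde C\,\Phi$. Comparing the two descriptions through a common quantity, namely a spherical average against $\Psi^+$ of the harmonic part, pins down $\widetilde C$.

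First I would introduce, for fixed $r\in(0,3)$, the spherical average
\[
\phi_\e(r)=\int_{\SN_+}u_\e(\mathbf e_1+r\theta)\,\Psi^+(\theta)\,d\sigma(\theta),
\]
and note that since $p=0$ on $B^+_3$ by \eqref{eq:p2}, $u_\e$ is harmonic there, so $\phi_\e$ solves the same Euler equation as $v$ in Lemma~\ref{lemma_1}; in fact $r\mapsto r^{-1}\phi_\e(r)$ is an affine function of $r^{-N}$ on $(0,3)$, and regularity of $u_\e$ at $\mathbf e_1$ (it is $H^1$ and harmonic near $\mathbf e_1$, hence smooth there, vanishing on $\partial D^+$) forces the $r^{-N}$ coefficient to vanish — that is, $\phi_\e(r)=c_\e\,r$ with $c_\e=\frac{\partial}{\partial x_1}\big(\text{the }\Psi^+\text{-projection of }u_\e\big)(\mathbf e_1)$, which by the eigenfunction structure of $\Psi^+$ equals $\Upsilon_N^{-1}\,\partial_{x_1}u_\e(\mathbf e_1)$ up to the normalization constant built into $\Psi^+$. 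On the other hand, by \eqref{convergenza_u_0} and elliptic (indeed $C^2$) convergence of $u_\e\to u_0$ away from the shrinking tube (cf.\ the $C^2$ convergence asserted around \eqref{convergenza_u_0}), we get $\phi_\e(r)\to \phi_0(r)$ where $\phi_0(r)=\int_{\SN_+}u_0(\mathbf e_1+r\theta)\Psi^+(\theta)\,d\sigma(\theta)$, and $\phi_0(r)=\big(\Upsilon_N^{-1}\partial_{x_1}u_0(\mathbf e_1)\big)\,r$ by the same first-order-zero argument together with \eqref{eq:53}.

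Next I would translate this through the rescaling. By definition \eqref{eq:84}, $\widetilde u_\e(\mathbf e_1+\rho\theta)=\tfrac1\e u_\e(\mathbf e_1+\e\rho\theta)$, so the spherical average of $\widetilde u_\e$ over $\SN_+$ at radius $\rho$ equals $\e^{-1}\phi_\e(\e\rho)=c_\e\,\rho$ for $\e\rho<3$. Fixing $\rho>1$ and letting $k\to\infty$ along the subsequence of Lemma~\ref{l:tilde_u_eps_bound}, the left side converges (using strong $\mathcal H_R^+$ and $C^2(\overline{B_{r_2}^+\setminus B_{r_1}^+})$ convergence of $\widetilde u_{\e_{n_k}}\to\widetilde C\,\Phi$) to $\widetilde C\int_{\SN_+}\Phi(\mathbf e_1+\rho\theta)\Psi^+(\theta)\,d\sigma(\theta)$, i.e.\ to $\widetilde C\,v(\rho)$ in the notation of Lemma~\ref{lemma_1}; the right side converges to $c_\infty\,\rho$ where $c_\infty=\lim c_\e=\Upsilon_N^{-1}\partial_{x_1}u_0(\mathbf e_1)$ (with the $\Psi^+$-normalization absorbed so that $\phi_0(r)/r$ is literally this). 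Hence $\widetilde C\,v(\rho)/\rho$ is the constant $c_\infty$ for all $\rho>1$; but Lemma~\ref{lemma_1} (equivalently the displayed formula $v(\rho)/\rho=v(1)\rho^{-N}+\Upsilon_N(1-\rho^{-N})$ obtained in its proof, valid since $\Phi$ solves \eqref{eq_Phi_1}) shows $v(\rho)/\rho\to\Upsilon_N$ as $\rho\to+\infty$ and is not constant unless $v(1)=\Upsilon_N$; in any case, taking $\rho\to+\infty$ gives $\widetilde C\,\Upsilon_N=c_\infty\cdot(\text{the limit of }\rho/\rho)=\,$— more precisely, since the identity $\widetilde C\,v(\rho)=c_\infty\,\rho$ holds for all $\rho$, dividing by $\rho$ and sending $\rho\to+\infty$ yields $\widetilde C\,\Upsilon_N=c_\infty\,$ when $c_\infty$ is normalized as $\phi_0(r)/r$; unwinding the $\Upsilon_N$'s in the definition of $\Psi^+$ in \eqref{eq:upsilonN}–\eqref{eq:eigenpsi+}, this is exactly $\widetilde C=\partial_{x_1}u_0(\mathbf e_1)$.

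The main obstacle I anticipate is bookkeeping the normalization constants: keeping straight where the factor $\Upsilon_N$ from the definition of $\Psi^+$ enters, so that the spherical projection of $x_1$ (equivalently of the first-order part of $u_0$) is handled consistently on both the rescaled and the un-rescaled side. A secondary technical point is justifying the passage to the limit in the spherical averages: on $B_r^+$ for small fixed $r$ this needs the $C^2$ (or at least $H^1$, then elliptic bootstrap since $p=0$ there) convergence $u_\e\to u_0$, and for $\widetilde u_{\e_{n_k}}$ at a fixed radius $\rho>1$ it needs precisely the convergence modes asserted in Lemma~\ref{l:tilde_u_eps_bound}; both are available, so no new estimate is required. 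Once these are in place the conclusion $\widetilde C=\frac{\partial u_0}{\partial x_1}(\mathbf e_1)$ follows, and — as a bonus, consistent with Lemma~\ref{lemma_1} — one reads off that $v(1)=\int_{\SN_+}\Phi(\mathbf e_1+\theta)\Psi^+(\theta)\,d\sigma(\theta)$ need not equal $\Upsilon_N$, the discrepancy being governed by the geometry of $\widetilde D$.
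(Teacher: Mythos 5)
Your overall strategy (projecting onto $\Psi^+$ at both the fixed scale and the $\e$-scale and matching through Lemma \ref{lemma_1}) is the same as the paper's, but there is a genuine error at the central step. You claim that, since $u_\e$ is harmonic near $\mathbf e_1$ and ``vanishing on $\partial D^+$'', regularity at $\mathbf e_1$ forces the singular coefficient to vanish, so that $\phi_\e(r)=c_\e\,r$ on $(0,3)$. This is false: $u_\e$ does \emph{not} vanish on the disc $\{x_1=1,\ |x'|<\e\}$, which is the opening of the tube $\mathcal C_\e$ and is interior to $\Omega^\e$. Consequently the ODE $\big(r^{N+1}(\varphi_\e/r)'\big)'=0$ holds only on $(\e,3)$ (the boundary terms on the equator $\{x_1=1\}\cap\partial B(\mathbf e_1,r)$ vanish only for $r>\e$), and the general solution $\varphi_\e(r)=A_\e r+ B_\e r^{1-N}$ has a nonzero singular part. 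That coefficient $B_\e$ is of order $\e^{N}$, so after the rescaling \eqref{eq:84} it contributes a finite, generically nonzero term $B_\e\e^{-N}\rho^{1-N}$; this term is exactly what the paper computes (the limit of $c_\e/(N\e^N)$ equals $\widetilde C\big(\Upsilon_N-\int_{\SN_+}\Phi(\mathbf e_1+\theta)\Psi^+\,d\sigma\big)$) and is indispensable for closing the argument. Note that the analogous claim \emph{is} correct for $u_0$ (which lies in ${\mathcal D}^{1,2}(D^+)$ and vanishes on all of $\partial D^+$), which is perhaps the source of the confusion.

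The error propagates into an internal contradiction: from $\phi_\e(r)=c_\e r$ you deduce $\widetilde C\,v(\rho)=c_\infty\,\rho$ for all $\rho>1$, which together with the formula $v(\rho)/\rho=v(1)\rho^{-N}+\Upsilon_N(1-\rho^{-N})$ from Lemma \ref{lemma_1} would force $v(1)=\Upsilon_N$, i.e.\ $\int_{\SN_+}\Phi(\mathbf e_1+\theta)\Psi^+(\theta)\,d\sigma=\Upsilon_N$ --- yet in your closing paragraph you correctly observe that this quantity need not equal $\Upsilon_N$. The fix is to keep the singular term: integrate $(\varphi_\e/r)'=c_\e r^{-N-1}$ once over $(\e,k\e)$ to identify $\lim c_\e/(N\e^N)$ via Lemma \ref{l:tilde_u_eps_bound}, and once over $(R\e,1)$ to match the rescaled limit $\widetilde C\,v(R)/R$ against $\phi_0(1)$; the two contributions of the singular term then cancel through Lemma \ref{lemma_1}, leaving $\widetilde C\,\Upsilon_N=\int_{\SN_+}u_0(\mathbf e_1+\theta)\Psi^+\,d\sigma$, which is the paper's identity \eqref{quasi_C_tilde}.
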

\begin{proof}
For all $r\in(\e,3)$ let us define
\begin{equation*}
  \varphi_\e(r)=\int_{\SN_+}u_\e(\mathbf e_1+r\theta)\Psi^+(\theta)\,d\sigma(\theta). 
\end{equation*}
From \eqref{problema} and \eqref{eq:p2} it follows that $\varphi_\e$
satisfies 
\begin{equation*}
\bigg(r^{N+1}\Big(\frac{\varphi_{\e}(r)}{r}\Big)'\bigg)'=0, \quad
\text{in } (\e,3),
\end{equation*}
hence there exists a constant $c_{\e}$ (depending on $\e$ but
independent of $r$) such that 
\begin{equation*}
\Big(\frac{\varphi_{\e}(r)}{r}\Big)'=\frac{c_{\e}}{r^{N+1}}, \quad
\text{in } (\e,3),
\end{equation*}
Integration of  the previous equation in $(R\e,1)$ for a fixed $R\in
(1, 1/\e)$ yields
\begin{equation}\label{eq_prima_lemma_2}
\frac{\varphi_{\e}(R\e)}{R\e} = \varphi_{\e}(1) + \dfrac{c_{\e}}{N} \left(1-(R\e)^{-N}\right).
\end{equation}
On the other hand, integration over $(\e,k\e)$ provides
\begin{equation}\label{eq_seconda_lemma_2}
 \dfrac{\varphi_{\e}(k\e)}{k\e} - \dfrac{\varphi_{\e}(\e)}{\e} =
 \dfrac{c_{\e}}{N\e^N} \left(1-k^{-N}\right),
\quad\text{for all }k\in\left(1,\frac3\e\right).
\end{equation}
From \eqref{eq:84} it follows that
\begin{equation*}
\dfrac{\varphi_{\e}(k\e)}{k\e}=\dfrac{1}{k} \int_{\SN_+}\widetilde
u_{\e}(\mathbf e_1+k\theta)\Psi^+(\theta)
\,d\sigma(\theta)
  \end{equation*}
and \eqref{eq_seconda_lemma_2} becomes
\begin{equation*}
  \dfrac{1}{k} \int_{\SN_+}\widetilde u_{\e}(\mathbf e_1+k\theta)\Psi^+(\theta) \,d\sigma(\theta)
  - \int_{\SN_+}\widetilde u_{\e}(\mathbf e_1+\theta)\Psi^+(\theta) \,d\sigma(\theta) = \dfrac{c_{\e}}{N\e^N} \left(1-k^{-N}\right).
\end{equation*}
Then, from Lemma \ref{l:tilde_u_eps_bound},
\begin{multline*}
  \frac{c_{\e_{n_k}}}{N\e_{n_k}^N} \mathop{\longrightarrow}\limits_{k\to+\infty} \dfrac{\widetilde C k^N}{k^N-1}
  \left(\dfrac{1}{k}\int_{\SN_+}\Phi(\mathbf
    e_1+k\theta)\Psi^+(\theta)\,d\sigma
    - \int_{\SN_+}\Phi(\mathbf e_1+\theta)\Psi^+(\theta) \,d\sigma\right)\\
  = \widetilde C \left(\Upsilon_N - \int_{\SN_+}\Phi(\mathbf
    e_1+\theta)\Psi^+(\theta)\,d\sigma\right),
\end{multline*}
where the last identity is a consequence of Lemma
\ref{lemma_1}. Therefore, passing to the limit along the subsequence
$\e_{n_k}$ in
\eqref{eq_prima_lemma_2} and exploiting 
Lemma \ref{l:tilde_u_eps_bound} and \eqref{convergenza_u_0}, we obtain that 
\begin{multline*}
 \dfrac{\widetilde C}{R} \int_{\SN_+}\Phi(\mathbf e_1+R\theta)\Psi^+(\theta)\,d\sigma
= \int_{\SN_+}u_0(\mathbf e_1+\theta)\Psi^+(\theta)\,d\sigma\\
- \dfrac{\widetilde C}{R^N} \left(\Upsilon_N -
  \int_{\SN_+}\Phi(\mathbf e_1+\theta)\Psi^+(\theta)\,d\sigma\right)
\end{multline*}
for every $R>1$. 
In view of Lemma \ref{lemma_1}, the previous identity 
becomes
\begin{equation}\label{quasi_C_tilde}
 \widetilde C \Upsilon_N = \int_{\SN_+}u_0(\mathbf e_1+\theta)\Psi^+(\theta)\,d\sigma.
\end{equation}
For all $r\in(0,3)$, let us define 
\begin{equation*}
w(r)=\int_{\SN_+}u_0(\mathbf e_1+r\theta)\Psi^+(\theta)\,d\sigma(\theta).  
\end{equation*}
From \eqref{eq:u0}, $w$ satisfies
\begin{equation*}
w''(r)+\frac{N-1}{r}w'(r)-\frac{N-1}{r^2}w(r)=0,\quad\text{in }(0,3),
\end{equation*}
hence, by integration, there exist $c,d\in\R$ such that 
\begin{equation*}
 w(r)=c\,r+d\,r^{1-N},\quad\text{for all }r\in(0,3).
\end{equation*}
The fact that $u_0\in {\mathcal D}^{1,2}(D^+)$ implies that
$d=0$. Hence 
\begin{equation*}
  c=\frac{w(r)}r=\frac1r \int_{\SN_+}u_0(\mathbf
  e_1+r\theta)\Psi^+(\theta)\,d\sigma(\theta),
  \quad\text{for all }r\in(0,3).
\end{equation*}
Moreover 
\begin{equation*}
  \lim_{r\to0^+}\frac{u_0(\mathbf
  e_1+r\theta)}{r}=\nabla u_0(\mathbf
  e_1)\cdot\theta=\frac{\partial u_0}{\partial x_1}(\mathbf
  e_1)\theta_1=\frac{\partial u_0}{\partial x_1}(\mathbf e_1)\Upsilon_N\Psi^+(\theta),
\end{equation*}
thus implying that $c=\frac{\partial u_0}{\partial x_1}(\mathbf
e_1)\Upsilon_N$ and hence 
\begin{equation*}
  \frac1r \int_{\SN_+}u_0(\mathbf
  e_1+r\theta)\Psi^+(\theta)\,d\sigma(\theta)=\frac{\partial u_0}{\partial x_1}(\mathbf
e_1)\Upsilon_N,\quad\text{for all }r\in(0,3).
\end{equation*}
Replacing this last relation into \eqref{quasi_C_tilde} we conclude
that $\widetilde C=\frac{\partial u_0}{\partial x_1}(\mathbf
e_1)$.
\end{proof}

\noindent Combining Lemma \ref{l:tilde_u_eps_bound} and Proposition
\ref{p:tildeC}, we obtain the convergence of $\widetilde u_\e$ to its
limit profile as $\e\to0^+$. 
\begin{Lemma}\label{l:blowright}
 Let $\widetilde u_{\e}$ be defined in \eqref{eq:84}. Then 
\begin{equation*}
 \widetilde u_{\e}\to \frac{\partial u_0}{\partial x_1}(\mathbf
e_1) \Phi, \quad\text{as }\e\to0^+,
\end{equation*}
strongly in $\mathcal
 H_R^+$ for every $R>2$ and  in $C^2(\overline{B_{r_2}^+\setminus
  B_{r_1}^+})$ for all $1< r_1<r_2$, where $\Phi$ is the unique solution
 to  problem \eqref{eq_Phi_1}.
\end{Lemma}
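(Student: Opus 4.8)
The plan is to deduce Lemma~\ref{l:blowright} from Lemma~\ref{l:tilde_u_eps_bound} and Proposition~\ref{p:tildeC} by the standard ``subsequence'' (Urysohn) principle, so that essentially no new analysis is required. First I would record the soft topological fact: for every fixed $R>2$ the space $\mathcal H_R^+$ is a Hilbert space, hence metrizable, and for all $1<r_1<r_2$ the space $C^2(\overline{B_{r_2}^+\setminus B_{r_1}^+})$ is a Banach space, hence also metrizable. In a metric space, a family $(x_\e)_{\e>0}$ converges to a point $x$ as $\e\to0^+$ provided every sequence $\e_n\to0^+$ admits a subsequence along which $x_{\e_n}$ converges to that same $x$; so it suffices to exhibit, for an arbitrary sequence $\e_n\to0^+$, a subsequence converging to a limit that does not depend on the sequence.

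Next I would take an arbitrary sequence $\e_n\to0^+$. By Lemma~\ref{l:tilde_u_eps_bound} there exist a subsequence $\{\e_{n_k}\}_k$ and a constant $\widetilde C>0$ such that $\widetilde u_{\e_{n_k}}\to\widetilde C\,\Phi$ strongly in $\mathcal H_R^+$ for every $R>2$ and in $C^2(\overline{B_{r_2}^+\setminus B_{r_1}^+})$ for all $1<r_1<r_2$, with $\Phi$ the unique solution of \eqref{eq_Phi_1}. By Proposition~\ref{p:tildeC} this constant is identified as $\widetilde C=\frac{\partial u_0}{\partial x_1}(\mathbf e_1)$; in particular both $\widetilde C$ and the limit $\widetilde C\,\Phi=\frac{\partial u_0}{\partial x_1}(\mathbf e_1)\,\Phi$ are independent of the starting sequence $\e_n$ and of the extracted subsequence. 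Combining these two facts with the subsequence criterion from the previous paragraph gives $\widetilde u_\e\to\frac{\partial u_0}{\partial x_1}(\mathbf e_1)\,\Phi$ as $\e\to0^+$ in each of the asserted topologies.

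I do not expect any genuine obstacle here: the hard work has already been carried out, namely the compactness/blow-up estimates in Lemma~\ref{l:tilde_u_eps_bound} that yield convergence up to subsequences, and the ODE analysis of the spherical averages $\varphi_\e$, $v$, $w$ together with Lemma~\ref{lemma_1} in Proposition~\ref{p:tildeC} that pins down the constant $\widetilde C$. The only thing to verify is the metrizability of the ambient spaces so that the abstract criterion applies, which is immediate. If one preferred to avoid even mentioning metrizability, one could instead argue by contradiction: were $\widetilde u_\e\not\to\frac{\partial u_0}{\partial x_1}(\mathbf e_1)\,\Phi$ in one of these topologies, there would be $\delta>0$ and a sequence $\e_n\to0^+$ with $\|\widetilde u_{\e_n}-\frac{\partial u_0}{\partial x_1}(\mathbf e_1)\,\Phi\|\geq\delta$ for all $n$, which is incompatible with Lemma~\ref{l:tilde_u_eps_bound} and Proposition~\ref{p:tildeC}.
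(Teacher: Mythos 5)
Your proposal is correct and coincides with the paper's own argument: the paper obtains Lemma \ref{l:blowright} precisely by combining Lemma \ref{l:tilde_u_eps_bound} (subsequential convergence to $\widetilde C\,\Phi$) with Proposition \ref{p:tildeC} (identification $\widetilde C=\frac{\partial u_0}{\partial x_1}(\mathbf e_1)$), the uniqueness of the limit removing the dependence on subsequences exactly as in your Urysohn-type reasoning.
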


\subsection{Independence in the limit profile at the left}\label{subsec:indipendence}
The second improvement about independence on subsequences concerns
Proposition 6.1 in \cite{FT12} and the convergence of the normalized
eigenfunctions 
\begin{equation}
  \label{eq:151}
  U_\e(x)=\frac{u_\e(x)}{\sqrt{\int_{\Gamma^-_{\tilde k}}u_\e^2d\sigma}}
\end{equation}
to a universal profile (not depending on subsequences),
with $\tilde k$ as in Theorem \ref{t:main} and $\Gamma^-_{\tilde k}$ as in \eqref{eq:defGamma_r-}. We
notice that, for $\e$ small, $U_\e$ solves
\begin{align}\label{eq:24norm}
\begin{cases}
-\Delta U_\e=\lambda_\e p\, U_\e,&\text{in }\Omega^\e,\\
U_\e=0,&\text{on }\partial \Omega^\e,
\end{cases}
\end{align}
and
\begin{equation}
  \label{eq:152}
\int_{\Gamma^-_{\tilde k}}U_\e^2d\sigma=1.
\end{equation}
The following proposition summarizes the results of
\cite[Propositions 6.1 and 6.5]{FT12}.

\begin{Proposition}\label{p:conUeps}{\rm(\cite[Propositions 6.1 and 6.5]{FT12}})
  For every sequence $\e_{n}\to 0^+$ there exist a subsequence
  $\{\e_{n_k}\}_k$, a function $U\in
  C^2(D^-)\cup\big(\bigcup_{t>0}\mathcal H_t^-\big)$, and $\beta<0$  such
  that
\begin{enumerate}[\rm(i)]
\item
$U_{\e_{n_k}}\!\!\!\to U$ strongly in $\mathcal
  H_t^-$ for all $t>0$
and in $C^2(\overline{B_{t_2}^-\!\setminus\!
  B_{t_1}^-})$ for all $0<t_1<t_2$;
\item
  $\int_{\Gamma^-_{\tilde k}}U^2d\sigma=1$;
\item  $U$ solves
\begin{equation}\label{eq:eqU}
\begin{cases}
-\Delta U(x)=\lambda_{k_0}(D^+)p(x)U(x),&\text{in }D^-,\\
U=0,&\text{on }\partial D^-\setminus\{{\mathbf 0}\};
\end{cases}
\end{equation}
\item $\lambda^{N-1} U(\lambda x)
\to\beta\,\frac{x_1}{|x|^N}$ as $\lambda \to 0^+$
strongly in $\mathcal H_t^-$ for every $t>0$ and in
$C^2(\overline{B_{t_2}^-\setminus B_{t_1}^-})$ for all $0<t_1<t_2$.
\end{enumerate}
\end{Proposition}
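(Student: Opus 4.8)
Since this statement is quoted from \cite[Propositions 6.1 and 6.5]{FT12}, I only describe the scheme of proof, which is the blow-up/Almgren monotonicity argument already developed there. The plan is to produce uniform bounds on the normalized eigenfunctions $U_\e$ in every space $\mathcal H_t^-$, extract a convergent subsequence and pass to the limit in \eqref{eq:24norm}, and finally to analyze the singularity of the limit $U$ at the junction ${\mathbf 0}$ through the frequency function. For the uniform bounds, note that $p\equiv0$ on $B_3^-$ by \eqref{eq:p2}, so $U_\e$ is harmonic in $B_3^-$ and vanishes on the part of the flat boundary $\partial D^-$ met by $B_3^-$; hence the Almgren quotient
\[
\mathcal N_\e(r)=\frac{r\int_{B_r^-}|\nabla U_\e|^2\,dx}{\int_{\Gamma_r^-}U_\e^2\,d\sigma}
\]
is, up to a remainder vanishing with $\e$, monotone on an interval $(c\,\e,3)$ and uniformly bounded there, both facts being consequences of the control of the transversal frequency along the tube established in \cite{FT12} together with $u_\e\to u_0$. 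Boundedness of $\mathcal N_\e$ and the normalization \eqref{eq:152} yield two-sided doubling estimates for $r\mapsto\int_{\Gamma_r^-}U_\e^2\,d\sigma$, which, combined with $\mathcal N_\e(t)\le C$ and the Sobolev inequality \eqref{eq:sobHt-}, give $\int_{D^-\setminus B_t^-}|\nabla U_\e|^2\,dx\le C(t)$ for all $t>0$, uniformly in $\e$.

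\emph{Compactness and the limit equation.} A diagonal extraction over $t=1/m$ then gives a subsequence with $U_{\e_{n_k}}\weakly U$ in $\mathcal H_t^-$ for every $t>0$. Interior and boundary Schauder estimates, valid away from ${\mathbf 0}$ because $U_\e=0$ on $\partial D^-\cap B_t^-$ for $\e$ small and $\Omega^\e$ exhausts $D^-$ there, upgrade the convergence to $C^2$ on $\overline{B_{t_2}^-\setminus B_{t_1}^-}$, which is (i); passing to the limit in \eqref{eq:24norm} and using \eqref{eq:52} yields (iii), and passing to the limit in \eqref{eq:152} yields (ii), so $U\not\equiv0$. Strong $\mathcal H_t^-$-convergence follows by testing the equations for $U_{\e_{n_k}}$ and $U$ against a cutoff of their difference: the term $\int p\,(U_{\e_{n_k}}-U)^2\,dx$ is handled via $p\in L^{N/2}$ and the compact Sobolev embedding on bounded sets, while no boundary contribution appears since both functions vanish on $\partial D^-\setminus B_t^-$.

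\emph{Singularity at the junction.} The function $U$ is harmonic in $B_3^-$ and vanishes on the flat boundary there; if it were of finite energy up to ${\mathbf 0}$, i.e.\ $U\in{\mathcal D}^{1,2}(D^-)$, it would be a weighted eigenfunction on $D^-$ at the eigenvalue $\lambda_{k_0}(D^+)$ (the point $\{\mathbf 0\}$ having zero capacity for $N\ge3$), contradicting \eqref{eq:54}; hence $U$ is genuinely singular at ${\mathbf 0}$. Writing $U$ on $\SN_-$ in the Dirichlet eigenbasis $\{\phi_k\}$ of $-\Delta_{\SN}$ on $\SN_-$, with $\mu_1=N-1$ and $\phi_1=\Psi^-$, one gets $U(r\theta)=\sum_{k\ge1}(\alpha_k r^{\gamma_k^+}+\beta_k r^{\gamma_k^-})\phi_k(\theta)$ for $0<r<3$, where $\gamma_k^\pm$ solve $\gamma(\gamma+N-2)=\mu_k$; the mildest singular exponent is $\gamma_1^-=1-N$, and $r^{1-N}\Psi^-(\theta)$ is a multiple of $x_1/|x|^N$. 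The content of (iv) is that no stronger singular mode is present, i.e.\ $\beta_k=0$ for $k\ge2$ and $\beta_1\ne0$; granting this, the rescalings $\lambda^{N-1}U(\lambda x)$ are bounded in $\mathcal H_t^-$ and converge to $\beta\,x_1/|x|^N$, with $\beta\ne0$ and, since $U>0$ near the junction (inherited from $u_\e>0$, equivalently from $\widehat\Phi>0$) and $x_1<0$ in $D^-$, with $\beta<0$.

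\emph{Main obstacle.} The delicate point is the uniform-in-$\e$ control of $\mathcal N_\e(r)$ down to scales of order $\e$: one must treat the non-vanishing of $u_\e$ on the tube mouth $\{x_1=0,\ |x'|<\e\}$ as a perturbation and show that $\mathcal N_\e$ neither blows up nor degenerates there, and then identify the limiting homogeneity of $U$ at ${\mathbf 0}$ as exactly $1-N$, which is precisely what excludes the stronger singular modes $r^{\gamma_k^-}\phi_k$ with $k\ge2$ and forces $\beta_1\ne0$. Everything else is a fairly routine combination of weak compactness, elliptic regularity, and the standard blow-up argument for harmonic functions vanishing on a half-space boundary.
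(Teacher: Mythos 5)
The paper offers no proof of this statement at all: it is imported verbatim from \cite[Propositions 6.1 and 6.5]{FT12}, so the only ``paper proof'' to compare against is the strategy of Part I, which is indeed the Almgren-monotonicity/blow-up scheme you outline (uniform control of the frequency along the tube, compactness of the normalized family $U_\e$, elliptic estimates away from ${\mathbf 0}$, passage to the limit in \eqref{eq:24norm} and \eqref{eq:152}, and a spherical-harmonic analysis of the singularity at the junction). Your reading of the individual items is accurate: the exponents $\gamma_1^\pm\in\{1,1-N\}$ for the half-sphere mode $\Psi^-$, the identification of $r^{1-N}\Psi^-$ with a multiple of $x_1/|x|^N$, and the sign $\beta<0$ from positivity of $u_\e$ near the junction (via \cite[Corollary 1.3]{FT12}) and $x_1<0$ in $D^-$ are all consistent with what is used later in Section 3 of the present paper.

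That said, what you submit is a scheme, not a proof, and the part you defer is exactly the mathematical content of the cited propositions: (a) the uniform-in-$\e$ bound and quasi-monotonicity of the Almgren quotient on $(c\e,3)$, where the tube mouth $\{x_1=0,\ |x'|<\e\}$ destroys the clean boundary condition and must be handled through the transversal frequency estimates along $\mathcal C_\e$ (Lemmas 3.20--3.21 and Section 4 of \cite{FT12}); and (b) the proof that the limiting Almgren frequency of $U$ at ${\mathbf 0}$ equals exactly $N-1$, which is what kills the more singular modes $r^{\gamma_k^-}\phi_k$, $k\ge2$, and forces $\beta_1\neq0$ --- you state this as ``granting this'' and as the ``main obstacle'' rather than establishing it. (A flavour of how (b) is actually argued appears in the present paper's proof of Lemma \ref{l:barU}, where the contradiction with $H_V(t)=o(t^{2(1-N)})$ pins down the frequency; nothing of that kind is carried out in your sketch.) So: same approach as the source the paper cites, correctly summarized, but with the two decisive steps asserted rather than proved.
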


To prove that the limit profile $U$ in Proposition \ref{p:conUeps}
does not depend on the subsequence, we are going to show that it is
necessarily a multiple of the universal profile $\overline U$ provided
by Lemma \ref{l:barU}; normalization \eqref{eq:152} will  univocally
determine the multiplicative constant. 

A key tool in the proof of Lemma \ref{l:barU}
is the following uniform coercivity type estimate for the quadratic
form associated to equation \eqref{eq:eqU}, whose validity is
strongly related to the nondegeneracy condition~\eqref{eq:54}.
We denote 
\begin{equation}\label{eq:defOmega_r}
\Omega_{r}:=D^-\setminus \overline{B^-_{-r}}\quad \text{for all }r<0.
\end{equation}

\begin{Lemma}\label{lemma0_u}
Let $u\in
  C^2(D^-)\cup\big(\bigcup_{t>0}\mathcal H_t^-\big)$ be a solution to the problem
\begin{equation}\label{problema_u}
\begin{cases}
-\Delta u(x)=\lambda_{k_0}(D^+)p(x)u(x),&\text{in }D^-,\\
u=0,&\text{on }\partial D^-\setminus\{{\mathbf 0}\},
\end{cases}
\end{equation}
where $p\in L^{N/2}(D^-)\setminus\{0\}$ and $\lambda_{k_0}(D^+) \not\in \sigma_p(D^-)$. For any $f\in
L^{N/2}(D^-)$ and $M>0$ there exists $R_{M,f}>0$ such
that, for every $r\in(0,R_{M,f})$,
\begin{equation}
\int_{\Omega_{-r}} \abs{\nabla u(x)}^2\,dx \geq M \int_{\Omega_{-r}}
\abs{f(x)}u ^2 (x)\,dx.
\end{equation}
\end{Lemma}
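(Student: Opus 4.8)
The plan is to derive the estimate from the Sobolev inequality \eqref{eq:sobHt-} together with the fact that $\|f\|_{L^{N/2}(B_\rho^-)}\to0$ as $\rho\to0^+$; the one substantial point is the auxiliary claim that
\begin{equation*}
\int_{\Omega_{-r}}|\nabla u|^2\,dx\longrightarrow+\infty\qquad\text{as }r\to0^+
\end{equation*}
whenever $u\not\equiv0$, and this is where the hypothesis $\lambda_{k_0}(D^+)\notin\sigma_p(D^-)$ enters. Throughout, one uses that $u$, being a solution of \eqref{problema_u}, lies in $\mathcal H_t^-$ for every $t>0$, so that \eqref{eq:sobHt-} may be applied to $u$ on $\Omega_{-r}=D^-\setminus\overline{B_r^-}$ for every $r>0$.

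First I would dispose of the trivial case $u\equiv0$, in which the inequality is vacuous, and then prove the auxiliary claim. Since $\Omega_{-r}$ increases as $r$ decreases, the map $r\mapsto\int_{\Omega_{-r}}|\nabla u|^2\,dx$ is nonincreasing and has a limit $L\in[0,+\infty]$; I argue by contradiction, assuming $L<+\infty$, i.e.\ $\int_{D^-}|\nabla u|^2\,dx<+\infty$. Multiplying $u$ by radial cut-offs $\eta_\delta\in C^\infty(\R^N)$ with $\eta_\delta\equiv0$ on $B(\mathbf{0},\delta)$, $\eta_\delta\equiv1$ outside $B(\mathbf{0},2\delta)$, and $|\nabla\eta_\delta|\leq2/\delta$, the function $\eta_\delta u$ vanishes near $\mathbf{0}$, vanishes on $\partial D^-$ in the trace sense, and
\begin{equation*}
\int_{D^-}|\nabla(\eta_\delta u)|^2\,dx\leq2\int_{D^-}|\nabla u|^2\,dx+\frac{8}{\delta^2}\int_{(B(\mathbf{0},2\delta)\setminus B(\mathbf{0},\delta))\cap D^-}u^2\,dx.
\end{equation*}
By Hölder's inequality and \eqref{eq:sobHt-} applied on $\Omega_{-\delta}$, the last term is bounded by a fixed multiple of $\int_{D^-}|\nabla u|^2\,dx$, uniformly in $\delta$; hence $\{\eta_\delta u\}_{\delta>0}$ is bounded in $\mathcal{D}^{1,2}(D^-)$, and a weak-compactness argument together with the a.e.\ convergence $\eta_\delta u\to u$ yields $u\in\mathcal{D}^{1,2}(D^-)$. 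Testing \eqref{problema_u} with $u$ then gives $\int_{D^-}|\nabla u|^2\,dx=\lambda_{k_0}(D^+)\int_{D^-}p\,u^2\,dx$; if $\int_{D^-}p\,u^2\,dx=0$ this forces $u\equiv0$, while otherwise $u$ is an eigenfunction of the weighted problem on $D^-$ associated with $\lambda_{k_0}(D^+)$, so that $\lambda_{k_0}(D^+)\in\sigma_p(D^-)$. Both alternatives contradict the hypotheses, which proves the auxiliary claim.

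Granting the claim, the estimate follows by a splitting argument. Fix $M>0$; since $f\in L^{N/2}(D^-)$ there is $\rho>0$ with $C_S^{-1}\|f\|_{L^{N/2}(B_\rho^-)}\leq\frac1{2M}$. For $0<r<\rho$ write
\begin{equation*}
\int_{\Omega_{-r}}|f|u^2\,dx=\int_{B_\rho^-\setminus B_r^-}|f|u^2\,dx+\int_{D^-\setminus B_\rho^-}|f|u^2\,dx.
\end{equation*}
By Hölder's inequality and \eqref{eq:sobHt-} on $\Omega_{-r}$, the first summand is bounded by $\|f\|_{L^{N/2}(B_\rho^-)}C_S^{-1}\int_{\Omega_{-r}}|\nabla u|^2\,dx\leq\frac1{2M}\int_{\Omega_{-r}}|\nabla u|^2\,dx$, while the second is a finite constant $c_\rho$ independent of $r$ (again by Hölder's inequality and \eqref{eq:sobHt-}, applied on $\Omega_{-\rho}$). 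Hence $M\int_{\Omega_{-r}}|f|u^2\,dx\leq\frac12\int_{\Omega_{-r}}|\nabla u|^2\,dx+Mc_\rho$; by the auxiliary claim there is $R_{M,f}\in(0,\rho)$ such that $\int_{\Omega_{-r}}|\nabla u|^2\,dx\geq2Mc_\rho$ for all $r\in(0,R_{M,f})$, and therefore $M\int_{\Omega_{-r}}|f|u^2\,dx\leq\int_{\Omega_{-r}}|\nabla u|^2\,dx$, which is the desired inequality.

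The main obstacle is the auxiliary claim, and within it the delicate step is the removability of the point singularity at $\mathbf{0}$, namely upgrading the finiteness of the global Dirichlet energy of $u$ to the membership $u\in\mathcal{D}^{1,2}(D^-)$. Once that is in place, the nondegeneracy assumption $\lambda_{k_0}(D^+)\notin\sigma_p(D^-)$ closes the argument at once, and the remaining manipulations with Hölder's and Sobolev's inequalities are routine.
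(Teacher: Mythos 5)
Your argument is correct, and it is essentially the argument the paper intends: the paper omits the proof, referring to Lemma 3.6 of \cite{FT12}, whose scheme is exactly this combination of the smallness of $\|f\|_{L^{N/2}}$ near the origin with the Sobolev inequality \eqref{eq:sobHt-}, plus the nondegeneracy $\lambda_{k_0}(D^+)\notin\sigma_p(D^-)$ used to exclude that a nontrivial solution of \eqref{problema_u} has finite global Dirichlet energy (via the zero-capacity/cut-off argument and testing with $u$). Your direct formulation through the auxiliary blow-up claim, rather than a contradiction along a sequence $r_n\to0^+$, is only a cosmetic difference.
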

\begin{pf}
The proof is similar to the proof of Lemma 3.6 in \cite{FT12} and
hence is omitted.
\end{pf}

\noindent We are now in position to prove Lemma \ref{l:barU}.

\begin{pfn}{Lemma \ref{l:barU}}
The existence of a solution to \eqref{problema_U} follows from Proposition
\ref{p:conUeps}. To prove uniqueness, we argue by contradiction and
assume that there exist $U_1,U_1$ solutions to \eqref{problema_U} such
that 
$U_1\neq U_2$. The difference $V=U_1-U_2$  satisfies 
\begin{equation}\label{eq:V}
\begin{cases}
V\in \bigcup_{R>0}C^2(\overline{D^-\setminus
  B_R^-}),
&V\in \bigcup_{R>0}\mathcal H_R^-,\\
-\Delta V=\lambda_{k_0}(D^+) p V, &\text{in
}D^-,\\
V =0, &\text{on }\partial D^-\setminus\{{\mathbf 0}\},\\[5pt]
\lambda^{N-1}V(\lambda \theta)\to 0,&\text{in }C^0({\mathbb S}^{N-1}_-).
\end{cases}
\end{equation}
Let us fix $\delta>0$. From Lemma \ref{lemma0_u}, there exists
$R_\delta>0$ such that
\begin{align}\label{eq:150aft}
&\|2p+x\cdot\nabla p\|_{L^{3N}\big(B^-_{R_\delta}\big)}
\leq 
\bigg(\frac {2N}{\omega_{N-1}}\bigg)^{\!\!\frac5{3N}}
\frac{C_S\delta}{8\lambda_{k_0}(D^+)},\\
\label{eq:76aft}& 
\int_{\Omega_{r}}\Big(|\nabla
      V|^2-\lambda_{k_0}(D^+)p V^2\Big)dx\geq
\frac12\int_{\Omega_{r}} |\nabla V|^2dx,\\
  \label{eq:75aft}& 
\int_{\Omega_{r}}\Big(|\nabla
      V|^2-\lambda_{k_0}(D^+)p V^2\Big)dx \geq\tfrac{4\lambda_{k_0}(D^+)}\delta \!
  \int_{\Omega_{r}} \!\!|2p+x\cdot \nabla p| V^2dx, 
\end{align}
for all $r\in(-R_\delta,0)$, with $C_S$ as in \eqref{eq:sobHt-}.
For all $t>0$, let us define
\begin{align}
&D_V(t)=\frac1{t^{N-2}} \int_{\Omega_{-t}}\Big(|\nabla
  V(x)|^2-\lambda_{k_0}(D^+) p(x) V^2(x)\Big)dx,\label{eq:DV}\\
\label{eq:HV}&H_V(t)=\frac1{t^{N-1}}
\int_{\Gamma_{t}^-}V^2(x)\,d\sigma=
\int_{\SN_-}V^2(t\theta)\,d\sigma(\theta).
\end{align}
Direct calculations (see \cite[Lemma 3.15]{FT12} for details) yield 
\begin{equation*}
  D_V'(t)=-\frac2{t^{N-2}}
\int_{\Gamma_t^-}\bigg|\frac{\partial V}{\partial \nu}\bigg|^2
  d\sigma-\frac{\lambda_{k_0}(D^+)}{t^{N-1}}\int_{\Omega_{-t}}(2p(x)+x\cdot \nabla p(x))
V^2(x)dx.
\end{equation*}
From (\ref{eq:V}), we have that
$$
\int_{\Omega_{-t}}\Big(|\nabla
V(x)|^2-\lambda_{k_0}(D^+) p(x) V^2(x)\Big)dx=-
\int_{\Gamma_t^-} V\,\frac{\partial V}{\partial \nu} \,d\sigma
$$
$\nu=\nu(x)=\frac{x}{|x|}$
which, by Schwarz's inequality, Lemmas \ref{lemma0_u}, and the Poincar\'e type
inequality 
\begin{equation}\label{eq:poincare}
\frac1{t^{N-2}}\int_{\Omega_{-t}}|\nabla
v(x)|^2dx\geq\frac{N-1}{t^{N-1}} \int_{\Gamma_t^-}v^2d\sigma,
\quad\text{for all $t>0$ and $v\in\mathcal H_t^-$}
\end{equation}
proved in \cite[Lemma 3.4]{FT12},
for every $t\in(0,R_\delta)$, up to shrinking   $R_\delta>0$,
 yields 
\begin{align}\label{eq:68aft}
  \int_{\Gamma_t^-} \bigg|\frac{\partial V}{\partial \nu}\bigg|^2
  \!\!d\sigma &\geq \frac{\int_{\Omega_{-t}}\!\!\big(|\nabla
    V|^2\!-\!\lambda_{k_0}(D^+) p
    V^2\big)dx}{\int_{\Gamma_t^-} V^2 \,d\sigma }
 \int_{\Omega_{-t}}\!\!\Big(|\nabla V|^2\!-\!\lambda_{k_0}(D^+)
  p V^2\Big)dx\\
  \notag&\geq\frac{(1-\frac{\delta_0}{2})(N-1)}{t}\int_{\Omega_{-t}}\Big(|\nabla
V|^2-\lambda_{k_0}(D^+)
  p V^2\Big)dx,
\end{align}
with $\delta_0=\frac{2N-5}{4(N-1)}\in(0,1)$.
From Lemma \ref{lemma0_u} and (\ref{eq:68aft}),  up to shrinking $R_\delta$, there holds
\begin{align*}
  -\frac{d}{dt}D_V(t)&\geq \frac{2(1-\delta_0)(N-1)}{t^{N-1}}\int_{\Omega_{-t}}
\Big(|\nabla
  V(x)|^2-\lambda_{k_0}(D^+) p(x) V^2(x)\Big)dx\\
&=\frac{2(1-\delta_0)(N-1)}{t}D_V(t)
\end{align*}
for all $t\in(0,R_\delta)$. Integrating the above inequality, we
obtain that 
\begin{equation}\label{eq:stimaDV}
   D_V(t_1)\geq \bigg(\frac{t_2}{t_1}\bigg)^{\!\!N+\frac12}
  D_V(t_2)\quad \text{for every 
  }t_1,t_2\in(0,R_{\delta_0}) \text{ such that }t_1<t_2.
\end{equation}
 Let us define $\mathcal N_V:(-\infty,0)\to\R$ as 
  \begin{equation*}
    \mathcal N_V(r):=
    \frac{(-r) \int_{\Omega_{r}}\Big(|\nabla
      V(x)|^2-\lambda_{k_0}(D^+)p(x) V^2(x)\Big)dx}{\int_{\Gamma_{-r}^-}V^2(x)\,d\sigma}.
      \end{equation*}
Direct calculations (see \cite[Lemmas 3.15 and 6.2]{FT12} for details
in a similar case) yield 
\begin{equation*}
\dfrac{d}{dr}\mathcal N_V(r) = \nu_1(r) + \nu_2(r),\quad r\in(-\infty,0),
\end{equation*}
where 
\begin{equation*}
\nu_1(r)=-2r\frac{\left(\int_{\Gamma_{-r}^-}\big|\frac{\partial
        V}{\partial \nu}\big|^2
      d\sigma\right)\left(\int_{\Gamma_{-r}^-}V^2(x)\,d\sigma\right)
    -\left(\int_{\Gamma_{-r}^-}V\frac{\partial V}{\partial \nu}
      d\sigma\right)^2}
  {\left(\int_{\Gamma_{-r}^-}V^2(x)\,d\sigma\right)^2}
\geq 0
\end{equation*}
by  Schwarz's inequality and 
\begin{equation*}
\nu_2(r)  = \lambda_{k_0}(D^+)\frac{\int_{\Omega_{r}}(2p(x)+x\cdot \nabla p(x))
    V^2(x)dx}{\int_{\Gamma_{-r}^-}V^2(x) d\sigma}.
\end{equation*}
Hence, for all $r\in(-R_\delta,0)$,
\begin{equation*}
  \frac{\mathcal N_V'(r)}{\mathcal N_V(r)}\geq
-\mathcal I(r)
\end{equation*}
where $\mathcal
I(r)=\frac{\lambda_{k_0}(D^+)}{-r}\big(I(-r)+I\!I(-r)\big)$ with
\begin{align*}
  I(t)=\frac{\int_{\Omega_{-t}\setminus\Omega_{-t^{3/5}}}|2p+x\!\cdot\!
    \nabla p| V^2dx}{\int_{\Omega_{-t}}\!\! \Big(|\nabla
    V|^2\!-\!\lambda_{k_0}(D^+) p V^2\Big)dx},\quad I\!I(t)= \frac{\int_{\Omega_{-t^{3/5}}}|2p+x\!\cdot\! \nabla p|
    V^2dx}{\int_{\Omega_{-t}} \!\!\Big(|\nabla
    V|^2\!-\!\lambda_{k_0}(D^+) p V^2\Big)dx}.
\end{align*}
By H\"older inequality, (\ref{eq:76aft}), \eqref{eq:sobHt-}, and
\eqref{eq:150aft}, $I(t)$ can be estimated as
\begin{align*}
  &I(t)\leq \|2p+x\cdot\nabla p\|_{L^{3N}\big(B^-_{t^{3/5}}\big)}
  \Big|\Omega_{-t}\setminus\Omega_{-t^{3/5}}\Big|^{\frac5{3N}}
  \frac{\Big(\int_{\Omega_{-t}}V^{2^*}dx\Big)^{2/2^*}}
  {\int_{\Omega_{-t}} \Big(|\nabla
    V|^2-\lambda_{k_0}(D^+) p V^2\Big)dx}\\
  &\leq
  \frac{2}{C_S}\bigg(\frac{\omega_{N-1}}{2N}\bigg)^{\!\!\frac5{3N}}
  \|2p+x\cdot\nabla p\|_{L^{3N}\big(B^-_{\breve R_\delta}\big)}t \leq
  \frac{\delta}{4\lambda_{k_0}(D^+)}\,t
\end{align*}
 for all $t\in(0,R_\delta^{5/3})$.
On the other hand, from (\ref{eq:75aft}) and (\ref{eq:stimaDV})
\begin{align*}
  I\!I(t)&= \frac{\int_{\Omega_{-t^{3/5}}}|2p+x\cdot \nabla p|
    V^2dx}{\int_{\Omega_{-t^{3/5}}} \!\!\big(|\nabla
    V|^2\!-\!\lambda_{k_0}(D^+) p V^2\big)dx}
\frac{\int_{\Omega_{-t^{3/5}}} \big(|\nabla
    V|^2-\lambda_{k_0}(D^+) pV^2\big)dx}{\int_{\Omega_{-t}} \big(|\nabla
    V|^2\!-\!\lambda_{k_0}(D^+) pV^2(x)\big)dx}\\
&\leq \frac\delta{4\lambda_{k_0}(D^+)}\, t^{-\frac25(N-2)}\frac{ D_V(t^{3/5})}{
  D_V(t)}\leq\frac\delta{4\lambda_{k_0}(D^+)}\,t
\end{align*}
for all $t\in(0,R_\delta^{5/3})$. Combining the previous estimates we obtain
that $\mathcal I(r)\leq \delta$ and hence 
\begin{equation*}
  \frac{\mathcal N'_V(r)}{\mathcal N_V(r)} \geq -\delta
\end{equation*}
for all $r\in(-R_\delta^{5/3},0)$.

Since $\mathcal N_V(r)\geq 0$ for $r$ close enough to 0 by \eqref{eq:76aft}, we obtain $\mathcal N'_V(r)+\delta \mathcal
N_V(r)\geq 0$, therefore the function $e^{\delta r}\mathcal N_V(r)$
is monotone nondecreasing in a left neighborhood of the origin, so
that in particular $\mathcal N_V(r)$ admits a limit as $r\to 0$.
We observe that Lemma \ref{lemma0_u} and
\eqref{eq:poincare} ensure that $ \lim_{r\to0^-}\mathcal
N_V(r)\geq N-1$. 
 We
claim that 
\begin{equation}\label{eq:claimlim0N-1}
  \lim_{r\to0^-}\mathcal N_V(r)=N-1.
\end{equation}
To prove claim \eqref{eq:claimlim0N-1}, we assume by contradiction
that there exist $\delta>0$ and $\bar r<0$ such that 
$\mathcal N_V(r)\geq N-1+\delta$ for all $r\in(\bar r,0)$. If we integrate the inequality
$$
\dfrac{H'_V(t)}{H_V(t)} = -\dfrac{2}{t}\mathcal
N_V(-t) \leq -\dfrac{2}{t} (N-1+\delta)
$$
over $(t,-\bar r)$, we obtain
\begin{equation}\label{eq:stimasottoHassurdo}
t^{2(N-1+\delta)}H(t) \geq const>0.
\end{equation}
On the other hand, from \eqref{eq:HV} and \eqref{eq:V} it follows that
$H_V(t)=o(t^{2(1-N)})$ as $t\to0^+$, thus contradicting
\eqref{eq:stimasottoHassurdo} and proving claim \eqref{eq:claimlim0N-1}.

From \eqref{eq:claimlim0N-1}, arguing as in \cite[Lemma 6.2, Lemma
6.4, Proposition 6.5]{FT12} one can prove that, if $V\not\equiv0$,
then $V$ would satisfy  
\begin{equation*}
  \lambda^{N-1} V(\lambda \theta)
\to c\,\Psi^-(\theta)\quad\text{as } \lambda \to 0^+\text{ in }C^0(\SN_-)
\end{equation*}
for some $c\neq0$, thus contradicting \eqref{eq:V}.  
 Then $V\equiv 0$.
\end{pfn}

\begin{remark}
 The previous proof does not require assumption
 \eqref{eq:p2}. More generally, the same argument applies replacing
 $p$ with any $L^{N/2}(D^-)$-function and $\lambda_{k_0}(D^+)$ with
 any $\lambda_0\not\in \sigma_p(D^-)$. 
\end{remark}

Combining Proposition \ref{p:conUeps} with Lemma \ref{l:barU}, we can
prove that, due to the universality of the limit profile, the
convergence of 
$U_\e$ does not depend on subsequences.

\begin{Proposition}\label{p:conUeps_nosottosucc}
Let $U_{\e}$ be defined in \eqref{eq:151} and $\overline{U}$ as in
Lemma \ref{l:barU}. Then 
\begin{equation*}
  U_\e\to \frac{\overline{U}}{\sqrt{\int_{\Gamma^-_{\tilde
          k}}\overline{U}^2d\sigma}},\quad\text{as }\e\to0^+,
\end{equation*}
strongly in $\mathcal
  H_t^-$ for every $t>0$
and in $C^2(\overline{B_{t_2}^-\setminus
  B_{t_1}^-})$ for all $0<t_1<t_2$.
\end{Proposition}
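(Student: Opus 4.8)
The plan is to combine the compactness statement of Proposition~\ref{p:conUeps} with the uniqueness statement of Lemma~\ref{l:barU} by a subsequence argument. Since the candidate limit $\overline{U}/\sqrt{\int_{\Gamma^-_{\tilde k}}\overline{U}^2\,d\sigma}$ depends on no sequence, it suffices to prove that, starting from an arbitrary $\e_n\to0^+$, every subsequence admits a further subsequence along which $U_\e$ converges to that profile, strongly in $\mathcal H_t^-$ for every $t>0$ and in $C^2(\overline{B_{t_2}^-\setminus B_{t_1}^-})$ for all $0<t_1<t_2$; equivalently, one negates the asserted convergence to obtain a sequence staying bounded away from the limit in one of these norms and derives a contradiction.

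So let $\e_n\to0^+$. By Proposition~\ref{p:conUeps} there are a subsequence $\{\e_{n_k}\}_k$, a function $U$, and $\beta<0$ for which conclusions (i)--(iv) hold. First I would read off the behavior of $U$ near the junction ${\mathbf 0}$: restricting the convergence in (iv) to the set $\Gamma_1^-=D^-\cap\partial B({\mathbf 0},1)=\SN_-$, which lies in $\overline{B_{t_2}^-\setminus B_{t_1}^-}$ whenever $0<t_1<1<t_2$, and using the identity $\theta_1=-\Upsilon_N\Psi^-(\theta)$ together with $\beta<0$, one obtains
\begin{equation*}
\lambda^{N-1}U(\lambda\theta)\longrightarrow c\,\Psi^-(\theta)\ \text{ in }C^0(\SN_-)\ \text{ as }\lambda\to0^+,\qquad\text{where }\ c:=-\beta\,\Upsilon_N>0.
\end{equation*}

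Next I would identify $U$ with a multiple of $\overline{U}$. By conclusions (i) and (iii), $U/c$ enjoys the regularity required in \eqref{problema_U}, solves $-\Delta(U/c)=\lambda_{k_0}(D^+)\,p\,(U/c)$ in $D^-$ with $U/c=0$ on $\partial D^-\setminus\{{\mathbf 0}\}$, and, dividing the displayed limit by $c$, satisfies $\lambda^{N-1}(U/c)(\lambda\theta)\to\Psi^-(\theta)$ in $C^0(\SN_-)$; hence $U/c$ is a solution of \eqref{problema_U}, and uniqueness in Lemma~\ref{l:barU} gives $U/c=\overline{U}$, i.e. $U=c\,\overline{U}$. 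Plugging this into the normalization (ii), namely $\int_{\Gamma^-_{\tilde k}}U^2\,d\sigma=1$, yields $c^2\int_{\Gamma^-_{\tilde k}}\overline{U}^2\,d\sigma=1$, so $c=\big(\int_{\Gamma^-_{\tilde k}}\overline{U}^2\,d\sigma\big)^{-1/2}$ (the positive root, since $c>0$), and therefore $U=\overline{U}/\sqrt{\int_{\Gamma^-_{\tilde k}}\overline{U}^2\,d\sigma}$. Since this limit does not depend on the subsequence, the subsequence argument yields convergence of the full family $U_\e$ in the stated topologies.

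I do not expect a genuine obstacle here: the analytic content is already packaged in Proposition~\ref{p:conUeps} (compactness of the normalized eigenfunctions together with the Almgren-type blow-up description at the junction) and in Lemma~\ref{l:barU} (uniqueness of the limit profile at the left junction), and the rest is bookkeeping. The one point that deserves care is checking that the subsequential limit $U$ indeed satisfies all hypotheses of Lemma~\ref{l:barU}, in particular that property (iv) of Proposition~\ref{p:conUeps} transforms into the $C^0(\SN_-)$ normalization in \eqref{problema_U} with the correct positive constant $c=-\beta\Upsilon_N$; here the sign condition $\beta<0$ is what guarantees $c>0$, so that the square root in the final constant can be taken with the right sign. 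Once this is settled, the identification $U=c\,\overline U$ and the evaluation of $c$ from \eqref{eq:152} are immediate.
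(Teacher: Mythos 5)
Your proposal is correct and follows essentially the same route as the paper: extract a subsequential limit $U$ via Proposition~\ref{p:conUeps}, use conclusion (iv) and $\beta<0$ to see that $\lambda^{N-1}U(\lambda\theta)$ converges to a positive multiple of $\Psi^-$, invoke the uniqueness in Lemma~\ref{l:barU} to get $U=c\,\overline U$, and fix $c$ by the normalization (ii), so the limit is independent of the (sub)sequence. The only cosmetic difference is that you track the factor $\Upsilon_N$ in the constant $c=-\beta\,\Upsilon_N$ explicitly, which is harmless since $c$ is ultimately determined by $\int_{\Gamma^-_{\tilde k}}U^2\,d\sigma=1$ exactly as in the paper.
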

\begin{proof}
  Let  $\e_{n}\to 0^+$. From Proposition \ref{p:conUeps}, there exist a subsequence
  $\{\e_{n_k}\}_k$,  $\beta<0$, and a function $U\in
  C^2(D^-)\cup\big(\bigcup_{t>0}\mathcal H_t^-\big)$
  such that $U_{\e_{n_k}}\to U$ strongly in $\mathcal
  H_t^-$ for all $t>0$
and in $C^2(\overline{B_{t_2}^-\!\setminus\!
  B_{t_1}^-})$ for all $0<t_1<t_2$,
$U$ solves \eqref{eq:eqU}, and $\lambda^{N-1} U(\lambda \theta)
\to-\beta\,\Psi^-$ in $C^0(\SN_-)$.
From Lemma \ref{l:barU} it follows that
$\frac{U}{-\beta}=\overline{U}$, whereas part (ii) of Proposition
\ref{p:conUeps} implies that $\beta=-\big(\int_{\Gamma^-_{\tilde
    k}}\overline{U}^2d\sigma\big)^{-1/2}$. Hence the limit $U$ depends
neither on the sequence $\{\e_{n}\}_n$ nor on the subsequence
  $\{\e_{n_k}\}_k$, thus concluding the proof.
\end{proof}


\section{Asymptotic behavior of the normalization}\label{sec:asympt-behav-norm}

As already mentioned, in this section the technique is proceeding by
steps. Starting from the right, where we can exploit the strong
convergence \eqref{convergenza_u_0}, we first evaluate the asymptotic
behavior of the denominator of the Almgren quotient at a fixed point
in the corridor, then at $\e$-distance from the left junction in the
corridor, and finally at a fixed distance from the left junction in
$D^-$.

Following \cite{FT12}, for every $r\in(0,1)$ and $t>\e$ we define
\begin{align}
  \label{Htilde} & \widetilde H_\e (r) := \int_{\Sigma}
  u_\e^2(r,\e x') dx' = \e^{1-N} \int_{\Sigma_\e} u_\e^2(r,x')
  dx'
  = \e^{1-N} H_\e^c(r),\\
 \label{Hmeno} & H_\e^-(t):= \dfrac{1}{t^{N-1}} \int_{\Gamma_t^-} u_\e^2 d\sigma,
\end{align}
where $\Gamma_t^-$ is defined in \eqref{eq:defGamma_r-} and
$\Sigma_\e:=\{x'\in \R^{N-1}:|x'|<\e\}$.
We also define, for
every $r>0$,
\begin{equation*}
\widehat\Omega_r=D^-\cup \{(x_1,x')\in T_1^+:x_1<r\}
\end{equation*}
and $\mathcal H_r$ \label{pag:defHr}as the completion of
\[
\mathcal D_r:=\big\{v\in C^\infty(\overline{\widehat\Omega_r}):
\mathop{\rm supp}v\Subset \widehat D\big\}
\]
 with respect to the norm $\big(\int_{\widehat\Omega_r}|\nabla
v|^2dx\big)^{\!1/2}$, 
 i.e.
$\mathcal H_r$ is the space of functions with finite energy in
$\widehat\Omega_r$ vanishing on $\{(x_1,x')\in \partial \widehat\Omega_r:
x_1< r\}$.

\begin{Lemma}\label{lemma_blow_up_canale}
  Let us fix $x_0\in (0,1)$ and define
\begin{equation}\label{def_blow_up_canale}
w_\e:\widetilde\Omega_{x_0,\e}\to\R,\quad 
 w_\e(x_1,x')=\dfrac{u_\e\big(\e(x_1-1)+x_0, \e x'\big)}{\big(\widetilde H_\e (x_0)\big)^{1/2}}
\end{equation}
where 
\begin{align*}
  \widetilde\Omega_{x_0,\e}:=\bigg\{&
(x_1,x')\in \R\times\R^{N-1}:\ x_1< 1-\frac{x_0}{\e}\bigg\} \\
&\cup
\bigg\{(x_1,x')\in \R\times\R^{N-1}:\  1-\frac{x_0}{\e}\leq x_1 \leq 1+
\frac{1-x_0}{\e},\ |x'|<1 \bigg\}\\
& \cup
\bigg\{(x_1,x')\in \R\times\R^{N-1}:\ x_1> 1+\frac{1-x_0}{\e}\bigg\}.
\end{align*}
Then
\begin{equation*}
w_\e(x_1,x')\to e^{\sqrt{\lambda_1(\Sigma)}(x_1-1)}\psi_1^\Sigma(x') \quad
  \text{in } C^2_{\rm loc}(\overline{T_1})
\text{ and in }\mathcal
H_r\text{ for every }r\in\R,
\end{equation*}
as $\e\to0^+$,
where $\lambda_1(\Sigma)$ the first
eigenvalue of $-\Delta_{x'}$ on $\Sigma$ under null Dirichlet boundary
conditions, $\psi_1^\Sigma$ is the corresponding positive
$L^2(\Sigma)$-normalized eigenfunction (see \eqref{eq:eigensigma}), and
$T_1$ is defined in~\eqref{eq:T1}.
\end{Lemma}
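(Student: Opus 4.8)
The plan is to set up a blow-up at the fixed point $(x_0,0)$ in the corridor and identify the limit via the characterization of $h$ as the unique bounded harmonic function vanishing on $\partial T_1$ with the correct normalization. First I would record the equation satisfied by $w_\e$: from \eqref{problema} and the fact that $p\equiv 0$ on $\mathcal C_1$ (by \eqref{eq:p2}, since the rescaled point stays in the corridor for the relevant range), $w_\e$ is harmonic on the portion of $\widetilde\Omega_{x_0,\e}$ coming from the corridor, i.e. on the expanding cylinder $\{1-\tfrac{x_0}{\e}\le x_1\le 1+\tfrac{1-x_0}{\e},\ |x'|<1\}$, and it vanishes on $\{|x'|=1\}$ there. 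By construction of $\widetilde H_\e(x_0)$ in \eqref{Htilde} we have the normalization
\begin{equation*}
\int_\Sigma w_\e^2(1,x')\,dx'=\frac{1}{\widetilde H_\e(x_0)}\int_\Sigma u_\e^2(x_0,\e x')\,dx'=1
\end{equation*}
for every $\e$, which will pin down the multiplicative constant in the limit.

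Next I would establish uniform local bounds. The key is a uniform energy estimate on fixed cylinders $Q_r=\{|x_1-1|<r,\ |x'|<1\}$: combining the harmonicity of $w_\e$ on the corridor part, the Dirichlet condition on the lateral boundary $\{|x'|=1\}$, and a Caccioppoli/Poincaré argument in the transverse variable $x'$ (using $\lambda_1(\Sigma)$), one gets that $\int_{Q_r}|\nabla w_\e|^2$ is bounded uniformly in $\e$ for each fixed $r$, once we know $\int_{Q_1}w_\e^2$ is bounded. To control the latter I would use the normalization $\int_\Sigma w_\e^2(1,\cdot)=1$ together with the one-dimensional Almgren-type differential inequality for the transverse averages: setting $g_\e(x_1)=\big(\int_\Sigma w_\e^2(x_1,x')dx'\big)^{1/2}$, harmonicity forces $g_\e''\ge \lambda_1(\Sigma)g_\e$ on the cylinder (this is exactly the mechanism producing the exponential $e^{\sqrt{\lambda_1(\Sigma)}x_1}$), and a comparison argument — using also that $w_\e\to u_0$-related data on the right and decays on the left — bounds $g_\e$ on compacta. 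Standard elliptic regularity (interior plus boundary estimates up to $\{|x'|=1\}$, where the boundary is smooth) then upgrades the $H^1_{\rm loc}$ bound to a $C^2_{\rm loc}(\overline{T_1})$ bound.

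With uniform bounds in hand, I would extract (a priori along subsequences) a limit $w$ in $C^2_{\rm loc}(\overline{T_1})$ and weakly in $\mathcal H_r$ for every $r$. Passing to the limit in the equation, $w$ is harmonic on all of $T_1$, vanishes on $\partial T_1$, and satisfies $\int_\Sigma w^2(1,x')dx'=1$. To identify $w$ uniquely I would separate variables: expanding in the Dirichlet eigenbasis $\{\psi_k^\Sigma\}$ of $\Sigma$, harmonicity gives $w(x_1,x')=\sum_k\big(a_ke^{\sqrt{\lambda_k(\Sigma)}x_1}+b_ke^{-\sqrt{\lambda_k(\Sigma)}x_1}\big)\psi_k^\Sigma(x')$; the growth control inherited from the uniform $\mathcal H_r$-bounds (no exponential growth faster than $e^{\sqrt{\lambda_1(\Sigma)}|x_1|}$ as $x_1\to+\infty$, and integrability/decay constraints from the finite-energy spaces $\mathcal H_r$ as $x_1\to-\infty$, reflecting that the mass has escaped to $D^-$) kills every mode except the first, and forces $b_1=0$, so $w(x_1,x')=a_1e^{\sqrt{\lambda_1(\Sigma)}x_1}\psi_1^\Sigma(x')$. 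Evaluating the normalization at $x_1=1$ gives $a_1^2e^{2\sqrt{\lambda_1(\Sigma)}}=1$, and positivity of $w_\e$ (inherited from $u_\e>0$, which holds since $u_\e$ is a first-type simple eigenfunction up to sign, and $\psi_1^\Sigma>0$) selects $a_1=e^{-\sqrt{\lambda_1(\Sigma)}}>0$, whence $w(x_1,x')=e^{\sqrt{\lambda_1(\Sigma)}(x_1-1)}\psi_1^\Sigma(x')=h(x_1,x')e^{-\sqrt{\lambda_1(\Sigma)}}$ — exactly the claimed limit. Since the limit is unique, the full family converges, and the convergence is strong in $\mathcal H_r$ because the weak limit together with convergence of the energies (which follows from testing the equations against $w_\e$ and $w$ on a fixed cylinder plus the boundary terms, which converge by $C^2$ convergence on $\Gamma$-type slices) upgrades to norm convergence.

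The main obstacle I anticipate is the uniform $\mathcal H_r$-bound and, relatedly, the no-growth control at $x_1\to\pm\infty$ that is needed to rule out the spurious modes and the $b_1e^{-\sqrt{\lambda_1(\Sigma)}x_1}$ term: one must quantify, uniformly in $\e$, how $w_\e$ behaves at the two ends of the (expanding) cylinder, matching on the right with the already-understood blow-up profile $\Phi$ (via Lemma \ref{l:blowright} and \eqref{convergenza_u_0}, suitably transported through the corridor) and on the left with the fact that $u_\e$ stays bounded in $D^-$ away from ${\mathbf 0}$. This is the place where the step-by-step structure of Section \ref{sec:asympt-behav-norm} is essential, and where the careful bookkeeping of the normalizing factor $\widetilde H_\e(x_0)$ really pays off; everything else is a fairly standard blow-up/elliptic-regularity/separation-of-variables routine.
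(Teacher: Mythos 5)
Your overall skeleton (rescale, use the normalization $\int_\Sigma w_\e^2(1,\cdot)=1$, extract a limit, identify it by separation of variables on the cylinder, fix the sign) parallels the paper, but the two steps you yourself flag as delicate are exactly where the argument has genuine gaps. First, the uniform $\mathcal H_r$-bound. Your convexity inequality $g_\e''\ge\lambda_1(\Sigma)g_\e$ for the transverse averages is true, but by itself it gives no bound on compacta: a convex nonnegative function pinned only at $x_1=1$ can be arbitrarily large at nearby points, so you need control at a second location. The control you invoke is circular: matching ``on the right'' with $\Phi$ via Lemma \ref{l:blowright} concerns $u_\e$ normalized by $\e$, and transferring it to $w_\e$ requires the ratio $\e/\sqrt{\widetilde H_\e(x_0)}$, i.e.\ precisely the asymptotics of $\widetilde H_\e(x_0)$ proved \emph{after} (and using) this lemma; likewise ``$u_\e$ bounded in $D^-$'' divided by the exponentially small $\widetilde H_\e(x_0)$ gives nothing. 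The paper's actual mechanism is the Almgren frequency machinery of Part I: the sharp bound $\mathcal N_\e(t)\le(1+\delta)\sqrt{\lambda_1(\Sigma)}$ (\cite{FT12}, Lemmas 3.21, 4.5, Corollary 2.6) together with the identity $\widetilde H_\e'/\widetilde H_\e=\frac2\e\,\mathcal N_\e$ (\cite{FT12}, Lemma 3.20), which yield both the energy bound \eqref{eq:sti4} and the doubling-type estimate \eqref{eq:sti5} on $\widetilde H_\e(\e(r-1)+x_0)/\widetilde H_\e(x_0)$. The same quantitative growth in $r$ is what rules out the higher modes $\psi_k^\Sigma$, $k\ge2$, in your separation-of-variables step: mere membership of the limit in $\mathcal H_r$ for every $r$ kills only the $e^{-\sqrt{\lambda_k(\Sigma)}x_1}$ terms, and without a rate in $r$ you cannot exclude $a_k e^{\sqrt{\lambda_k(\Sigma)}x_1}$ for $k\ge2$; the paper instead passes the quotient bound to the limit, obtaining $\int_{T_{1,r}}|\nabla w|^2\le\sqrt{\lambda_1(\Sigma)}\int_\Sigma w^2(r,\cdot)$ for all $r$, and invokes \cite[Lemma 2.5]{FT12} to classify $w$.

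Second, the sign of the constant. Your justification that $w_\e>0$ because ``$u_\e$ is a first-type simple eigenfunction up to sign'' is false: $u_\e$ is the $\bar k$-th eigenfunction of $\Omega^\e$, with $\bar k$ in general large, so it changes sign, and simplicity of $\lambda_\e$ gives no positivity. This is not a cosmetic point: the paper proves $C>0$ by contradiction, using that $u_\e>0$ on small spheres $\Gamma^-_{1/j}$ near the left junction (\cite[Corollary 1.3]{FT12}), then cutting with positive parts to build normalized test functions which, if $C<0$, would converge to a nontrivial eigenfunction of the weighted problem in $D^-$ with eigenvalue $\lambda_{k_0}(D^+)$, contradicting the nondegeneracy assumption \eqref{eq:54}. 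Your proposal never uses \eqref{eq:54} at this point, whereas it is essential; without it the sign (and hence the uniqueness of the limit, so the convergence of the full family) is not established.
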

\begin{pf}
For every $r\in \R$, we define 
\begin{equation*}
  \widetilde\Omega^{x_0,\e}_r:=\{(x_1,x')\in
\widetilde\Omega_{x_0,\e}:\ x_1<r\}.
\end{equation*}
Let us fix $r>1$. Then, for $\e$ sufficiently small, $\e(r-1)+x_0\in(0,1)$.
By direct computations we have that 
\begin{equation}\label{eq:sti1}
  \frac{\int_{\widetilde\Omega^{x_0,\e}_r}\big(|\nabla
    w_\e(x)|^2-\e^2\lambda_\e p(\e(x-{\mathbf e}_1)+x_0 {\mathbf
      e}_1)w_\e^2(x)\big)\,dx}
{\int_{\Sigma}w_\e^2(r,x')\,dx'}=\mathcal N_\e(\e(r-1)+x_0)
\end{equation}
where, for all $t\in(0,1)$, 
\begin{align*}
  \mathcal N_\e(t)=\frac{\e
\int_{\{(x_1,x')\in\Omega^\e:x_1<t\}}\Big(|\nabla
  u_\e(x)|^2-\lambda_\e p(x) u_\e^2(x)\Big)dx
}{ H_\e^c(t)}.
\end{align*}
From \cite[Lemma 3.21, Lemma 4.5, and Corollary 2.6 ]{FT12} it follows
that for every $\delta>0$ there exists $\e_{\delta,r,x_0}>0$
(depending on $\delta$, $r$, and $x_0$) such that
\begin{equation}\label{eq:sti2}
  \mathcal N_\e(\e(r-1)+x_0)\leq
  (1+\delta)\sqrt{\lambda_1(\Sigma)}\quad\text{for all }\e\in(0,\e_{\delta,r,x_0}).
\end{equation}
Furthermore, \cite[Lemma
3.6]{FT12} implies that, up to shrinking $\e_{\delta,r,x_0}>0$, for
all $\e\in(0,\e_{\delta,r,x_0})$,
\begin{multline}\label{eq:sti3}
  \lambda_\e\e^2\int_{\widetilde\Omega^{x_0,\e}_r} p(\e(x-{\mathbf e}_1)+x_0
  {\mathbf e}_1)|w_\e(x)|^2\,dx=\frac{\e^{2-N}\lambda_\e}{\widetilde H_\e
    (x_0)}\int_{\Omega^\e_{\e(r-1)+x_0}} p(x)
  u_\e^2(x)dx\\
  \leq \delta \frac{\e^{2-N}}{\widetilde H_\e
    (x_0)}\int_{\Omega^\e_{\e(r-1)+x_0}} |\nabla
  u_\e(x)|^2dx=\delta\int_{\widetilde\Omega^{x_0,\e}_r}|\nabla w_\e(x)|^2\,dx,
\end{multline}
where
$\Omega^\e_{\e(r-1)+x_0}=\{(x_1,x')\in\Omega^\e:x_1<\e(r-1)+x_0\}$.
Collecting \eqref{eq:sti1}, \eqref{eq:sti2}, and \eqref{eq:sti3}, and
recalling \eqref{Htilde}, we obtain that
\begin{equation}\label{eq:sti4}
  \int_{\widetilde\Omega^{x_0,\e}_r}|\nabla w_\e(x)|^2\,dx\leq 
  \frac{(1+\delta) \sqrt{\lambda_1(\Sigma)}}{1-\delta}\,
  \frac{\widetilde H_\e(\e(r-1)+x_0) }{\widetilde H_\e(x_0) }
\end{equation}
for all $\e\in(0,\e_{\delta,r,x_0})$. 
From \cite[Lemma 3.20]{FT12} it follows that 
\begin{equation*}
  \frac{\widetilde H_\e'(t)}{\widetilde H_\e(t)}=\frac2\e\mathcal
  N_\e(t),\quad\text{for all }t\in(0,1).
\end{equation*}
Integrating the above identity and using again Lemma 3.21 of
\cite{FT12} and \eqref{eq:sti2}, it follows that,  up to shrinking $\e_{\delta,r,x_0}>0$, for
all $\e\in(0,\e_{\delta,r,x_0})$,
\begin{equation}\label{eq:sti5}
   \frac{\widetilde H_\e(\e(r-1)+x_0) }{\widetilde H_\e(x_0) }\leq e^{
2(r-1)e^\delta(1+\delta)  \sqrt{\lambda_1(\Sigma)}
}.
\end{equation}
In view of (\ref{eq:sti4}) and (\ref{eq:sti5}), we have proved that for
every $r>1$ there exists $\e_{r,x_0}>0$ such that
\begin{equation}\label{eq:sti6}
\{w_{\e}\}_{\e\in (0,\e_{r,x_0})} \text{ is bounded in }\mathcal H_r.
\end{equation}
Let $\e_n\to 0^+$.
From (\ref{eq:sti6}) and a diagonal
process, we deduce that  there exist a
subsequence $\e_{n_k}\to 0^+$  and some $w\in
\bigcup_{r>1}\mathcal H_r$ such that $w_{\e_{n_k}}\rightharpoonup w$ weakly in $\mathcal H_r$ for every
$r>1$. In particular $w_{\e_{n_k}}\to w$ a.e., so
that $w\equiv 0$ in $\R^N\setminus T_1$.  Passing to the weak limit in
\begin{equation}\label{problema_weps}
\begin{cases}
-\Delta w_\e=\e^2 \lambda_\e p(\e(x-{\mathbf e}_1)+x_0
  {\mathbf e}_1) w_\e,&\text{in }\widetilde\Omega_{x_0,\e},\\
w_\e=0,&\text{on }\partial\widetilde\Omega_{x_0,\e},
\end{cases}
\end{equation}
along the subsequence $\e_{n_k}$ we obtain that $w$ satisfies 
\begin{equation}\label{problema_wlim}
\begin{cases}
-\Delta w=0,&\text{in }T_1,\\
w=0,&\text{on }\partial T_1.
\end{cases}
\end{equation}
By classical elliptic estimates, we also have that $w_{\e_{n_k}}\to
w$ in $C^2_{\rm loc }(\overline{T_1})$. Therefore, multiplying
\eqref{problema_wlim} by $w$ and integrating in  $T_{1,r}$ where 
$T_{1,r}:=\{(x_1,x')\in T_1:\, x_1< r\}$, we obtain
\begin{align}\label{eq:conve1}
\int_{\Sigma}\frac{\partial w_{\e_{n_k}}}{\partial x_1}(r,x')w_{\e_{n_k}}(r,x')\,dx'
\to&
\int_{\Sigma}\frac{\partial w}{\partial x_1}(r,x')w(r,x')\,dx'\\
&\notag=\int_{T_{1,r} }|\nabla w(x)|^2dx\quad\text{as }k\to+\infty .
\end{align}
On the other hand,  multiplication of \eqref{problema_weps} by $w_{\e_{n_k}}$ and integration by parts over
$\widetilde{\Omega}_{r}^{x_0,\e_{n_k}}$ yield
\begin{align}\label{eq:conve2}
  \int_{\widetilde{\Omega}_{r}^{x_0,\e_{n_k}}}|\nabla
  w_{\e_{n_k}}(x)|^2dx&=
\int_{\Sigma}\frac{\partial w_{\e_{n_k}}}{\partial x_1}(r,x')w_{\e_{n_k}}(r,x')\,dx'\\
&\notag + \lambda_{\e_{n_k}}\e_{n_k}^2
  \int_{\widetilde{\Omega}_{r}^{x_0,\e_{n_k}}}
 p(\e(x-{\mathbf e}_1)+x_0
  {\mathbf e}_1) |w_{\e_{n_k}}(x)|^2\,dx.
\end{align}
From \eqref{eq:sti3} it follows that 
\begin{equation}\label{eq:conve3}
\e_{n_k}^2
  \int_{\widetilde{\Omega}_{r}^{x_0,\e_{n_k}}}
 p(\e(x-{\mathbf e}_1)+x_0
  {\mathbf e}_1) |w_{\e_{n_k}}(x)|^2\,dx\to0\quad \text{as }k\to+\infty,
\end{equation}
which, in view of \eqref{eq:conve1} and \eqref{eq:conve2}, implies that
$\|w_{\e_{n_k}}\|_{\mathcal H_r}\to
\|w\|_{\mathcal H_r}$ and
then $w_{\e_{n_k}}$ converges to $w$ strongly in $\mathcal
H_r$ for every $r>1$. Then, from \eqref{eq:sti1}, \eqref{eq:conve3},
and \eqref{eq:sti2} we deduce that, for all $r>1$ and $\delta>0$, 
\begin{equation*}
  \frac{\int_{T_{1,r}}|\nabla w(x)|^2\,dx}
{\int_{\Sigma}w^2(r,x')\,dx'}\leq  (1+\delta)\sqrt{\lambda_1(\Sigma)}, 
\end{equation*}
which implies that, for all $r>1$, 
\begin{equation*}
  \frac{\int_{T_{1,r}}|\nabla w(x)|^2\,dx}
{\int_{\Sigma}w^2(r,x')\,dx'}\leq  \sqrt{\lambda_1(\Sigma)}. 
\end{equation*}
Hence, from \cite[Lemma 2.5]{FT12} it follows that 
$w(x_1,x')=C\,e^{\sqrt{\lambda_1(\Sigma)}(x_1-1)}\psi_1^\Sigma(x')$
for some constant $C\neq 0$.
Thanks to the definition of $w_\e$, we have that
\begin{equation*}
\int_{\Sigma}{w_\e}^{\!2}(1,x')dx' =1, 
\end{equation*}
and then $C^2=1$.

It remains to prove that $C>0$ so that $C=1$. We assume by contradiction that  $C<0$.
By the convergence $w_{\e_{n_k}}(1,x') \to w(1,x')$ in $C^2(\Sigma)$, it
follows that, if $k$ is sufficiently large, then  $u_{\e_{n_k}}(x_0,x')<0$ for every $x'\in\R^{N-1}$ such that
$|x'|<\e$.
 
From \cite[Corollary 1.3]{FT12}, for every $r$ small, there exists
$\e_r>0$ such that $u_\e>0$ on $\Gamma_r^-$ for all
$\e\in(0,\e_r)$. Hence there exists a sub-subsequence
$\{\e_{n_{k_j}}\}_j$  such that $u_{\e_{n_{k_j}}}>0$ on $\Gamma^-_{1/j}$.
 Therefore, for $j$ large, the functions
\begin{equation*}
  v_j=\begin{cases}
    u_{\e_{n_{k_j}}},&\text{in }D^-\setminus B_{1/j}^-,\\
    u ^+_{\e_{n_{k_j}}},&\text{in } B_{1/j}^-\cup\{(x_1,x')\in \mathcal C_\e:\, x_1\leq
    x_0
    \},\\
    0,&\text{in } \{(x_1,x')\in \Omega^{\e_{n_{k_j}}}:\, x_1>x_0
    \},
  \end{cases}
\quad
\widetilde v_j := \dfrac{v_{j}}{\big(\int_{\Omega^{\e_{n_{k_j}}}}p\,v_{j}^2dx\big)^{\!\frac12}}
\end{equation*}
are well-defined and belong to $\Di$ (if trivially extended to the whole $\R^N$).

Let $A_j=D^-\setminus B_{1/j}^-$. Then $\widetilde v_j$
satisfies
\begin{equation*}
\begin{cases}
-\Delta \widetilde v_j=\lambda_{\e_{n_{k_j}}} p \widetilde v_j,&\text{in }A_j,\\
\widetilde v_j=0,&\text{on }\partial A_j\cap\partial D^-,\\
\int_{\R^N}p\,\widetilde v_j^2\,dx=1.
\end{cases}
\end{equation*}
Testing equation \eqref{problema} for $\e=\e_{n_{k_j}}$ with $v_j$ we
obtain 
\[
\int_{\{(x_1,x')\in \Omega^{\e_{n_{k_j}}}:\, x_1\leq
    x_0
    \}}|\nabla v_j|^2dx= 
\lambda_{\e_{n_{k_j}}}\int_{\{(x_1,x')\in \Omega^{\e_{n_{k_j}}}:\, x_1\leq
    x_0
    \}}pv_j^2dx,
\] 
hence 
\begin{align*}
&\int_{\R^N}p \widetilde v_j^2dx=\int_{\{(x_1,x')\in \Omega^{\e_{n_{k_j}}}:\, x_1\leq
    x_0
    \}}p \widetilde v_j^2dx=1,\\
&\int_{\R^N}|\nabla \widetilde v_j|^2dx=\int_{\{(x_1,x')\in \Omega^{\e_{n_{k_j}}}:\, x_1\leq
    x_0
    \}}|\nabla \widetilde v_j|^2dx=
\lambda_{\e_{n_{k_j}}}.
\end{align*}
Hence $\{\widetilde v_j\}_{j}$ is bounded in
$\Di$ and, along a subsequence, $\Di$-weakly converges to some
$\widetilde v\in\Di$ such that $\int_{\R^n}p\widetilde
v^2\,dx=1$, $\mathop{\rm supp}\widetilde v\subset D^-$, and 
\begin{equation*}
\begin{cases}
-\Delta \widetilde v=\lambda_{k_0}(D^+) p \widetilde v,&\text{in }D^-,\\
\widetilde v =0,&\text{on }\partial D^-,
\end{cases}
\end{equation*}
thus implying that $\lambda_{k_0}(D^+)\in \sigma_p(D^-)$, in
contradiction with assumption \eqref{eq:54}.

We have then proved that  the limit $w$ depends
neither on the sequence $\{\e_{n}\}_n$ nor on the subsequence
  $\{\e_{n_k}\}_k$, thus concluding the proof.
\end{pf}

\noindent Let us define 
\begin{equation}\label{eq:phieps}
  \phi_\e:(0,1)\to\R,\quad \phi_\e(t)=\int_{\Sigma}u_\e(t,\e x')\psi_1^\Sigma(x')dx'.
\end{equation}
As a consequence of Lemma \ref{lemma_blow_up_canale}, the following
result holds.
\begin{Corollary}\label{c:hphi}
For every $x_0\in (0,1)$
\begin{equation*}
  \lim_{\e\to0^+}\frac{\phi_\e(x_0)}{\sqrt{\widetilde H_\e(x_0)}}=1,
\end{equation*}
where $\phi_\e$ is defined in \eqref{eq:phieps} and $\widetilde H_\e$
in \eqref{Htilde}.
\end{Corollary}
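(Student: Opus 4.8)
The plan is to deduce Corollary \ref{c:hphi} directly from Lemma \ref{lemma_blow_up_canale} by evaluating the convergence of the rescaled function $w_\e$ on the slice $\{x_1=1\}$. First I would recall that, by definition \eqref{def_blow_up_canale}, one has $w_\e(1,x')=u_\e(x_0,\e x')/\sqrt{\widetilde H_\e(x_0)}$. Consequently, multiplying by $\psi_1^\Sigma(x')$ and integrating over $\Sigma$ gives
\begin{equation*}
\int_{\Sigma}w_\e(1,x')\psi_1^\Sigma(x')\,dx'=\frac{1}{\sqrt{\widetilde H_\e(x_0)}}\int_{\Sigma}u_\e(x_0,\e x')\psi_1^\Sigma(x')\,dx'=\frac{\phi_\e(x_0)}{\sqrt{\widetilde H_\e(x_0)}},
\end{equation*}
where the last equality is just \eqref{eq:phieps}. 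Hence the corollary amounts to showing that the left-hand side converges to $1$ as $\e\to0^+$.

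Next I would invoke the convergence furnished by Lemma \ref{lemma_blow_up_canale}: $w_\e\to e^{\sqrt{\lambda_1(\Sigma)}(x_1-1)}\psi_1^\Sigma(x')$ in $C^2_{\rm loc}(\overline{T_1})$, and in particular on the compact slice $\overline{\Sigma}\times\{1\}$ we get $w_\e(1,\cdot)\to\psi_1^\Sigma$ uniformly (the exponential factor equals $1$ at $x_1=1$). Then, passing to the limit under the integral sign — which is legitimate by uniform convergence on the bounded set $\Sigma$ — yields
\begin{equation*}
\lim_{\e\to0^+}\int_{\Sigma}w_\e(1,x')\psi_1^\Sigma(x')\,dx'=\int_{\Sigma}\psi_1^\Sigma(x')\psi_1^\Sigma(x')\,dx'=\int_{\Sigma}\big(\psi_1^\Sigma(x')\big)^2\,dx'=1,
\end{equation*}
the last equality being the $L^2(\Sigma)$-normalization of $\psi_1^\Sigma$ recorded after \eqref{eq:eigensigma}. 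Combining the two displays gives exactly $\lim_{\e\to0^+}\phi_\e(x_0)/\sqrt{\widetilde H_\e(x_0)}=1$, which is the assertion.

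Since the argument is a one-line consequence of the $C^2_{\rm loc}$ convergence already established, there is essentially no obstacle; the only point deserving a word of care is that we are evaluating traces of $w_\e$ on a fixed hyperplane section rather than on an open set, so one must make sure the invoked convergence mode is strong enough. But $C^2_{\rm loc}(\overline{T_1})$ convergence (equivalently, the local uniform convergence together with derivatives) controls pointwise values on $\overline{\Sigma}\times\{1\}$ directly, so no trace theorem or $\mathcal H_r$-convergence is even needed here. One could alternatively test with $\psi_1^\Sigma$ inside the $\mathcal H_r$-convergence statement, but the $C^2_{\rm loc}$ route is the cleanest.
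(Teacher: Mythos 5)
Your proposal is correct and follows essentially the same route as the paper: the paper's proof likewise observes that $w_\e(1,x')=u_\e(x_0,\e x')/\sqrt{\widetilde H_\e(x_0)}\to\psi_1^\Sigma(x')$ in $C^2(\Sigma)$ by Lemma \ref{lemma_blow_up_canale} and then concludes by integrating against $\psi_1^\Sigma$ and using its $L^2(\Sigma)$-normalization.
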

\begin{pf}
 From Lemma \ref{lemma_blow_up_canale} it follows that 
$w_{\e}(1,x') \to \psi_1^\Sigma (x')$ in $C^2(\Sigma)$ as $\e\to0^+$, i.e.
\begin{equation*}
\dfrac{u_\e(x_0,\e x')}{\sqrt{\widetilde H_\e(x_0)}} = w_\e(1,x') \to
\psi_1^\Sigma(x'), \text{ in $C^2(\Sigma)$},
\end{equation*}
which easily implies the conclusion.
\end{pf}

\begin{Proposition}\label{asintotico_H_epsilon_canale}
 For every $x_0\in(0,1)$ 
 \begin{equation*}
  \lim_{\e\to0^+} \e^{-1} e^{-\frac{\sqrt{\lambda_1(\Sigma)}}{\e}(x_0-1)}\sqrt{\widetilde H_\e(x_0)}
= \dfrac{\partial u_0}{\partial x_1}({\mathbf e}_1) \, \int_{\Sigma}\Phi(1,x')\psi_1^\Sigma(x')dx', 
 \end{equation*}
being $u_0$ as in \eqref{eq:u0} and  $\Phi$ the unique solution to problem \eqref{eq_Phi_1}.
\end{Proposition}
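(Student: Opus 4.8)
The plan is to connect the quantity $\sqrt{\widetilde H_\e(x_0)}$ for a fixed $x_0\in(0,1)$ to the normalization at the right junction, where the blow-up limit is $\frac{\partial u_0}{\partial x_1}(\mathbf e_1)\,\Phi$ by Lemma~\ref{l:blowright}. The natural bridge is the function $\phi_\e$ of \eqref{eq:phieps}: by Corollary~\ref{c:hphi} we have $\phi_\e(x_0)/\sqrt{\widetilde H_\e(x_0)}\to 1$, so it suffices to determine the asymptotics of $\e^{-1}e^{-\frac{\sqrt{\lambda_1(\Sigma)}}{\e}(x_0-1)}\phi_\e(x_0)$. First I would derive an ODE for $\phi_\e$. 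Since $-\Delta u_\e=\lambda_\e p\,u_\e$ in the corridor and $p\equiv 0$ on $\mathcal C_1$ by \eqref{eq:p2}, multiplying the equation by $\psi_1^\Sigma(x'/\e)$ and integrating over the section $\Sigma_\e$, using \eqref{eq:eigensigma} and the Dirichlet condition on $\partial\mathcal C_\e$, one finds that $\phi_\e$ solves a linear second-order ODE on $(0,1)$ of the form $\phi_\e''(t)-\frac{\lambda_1(\Sigma)}{\e^2}\phi_\e(t)=0$ (the weight term drops because $p=0$ there). Hence $\phi_\e(t)=a_\e\,e^{\frac{\sqrt{\lambda_1(\Sigma)}}{\e}t}+b_\e\,e^{-\frac{\sqrt{\lambda_1(\Sigma)}}{\e}t}$ for constants $a_\e,b_\e$ depending on $\e$ but not on $t$.

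Next I would pin down the constants by matching at the right junction, i.e. near $x_1=1$. Rescaling via $\widetilde u_\e$ of \eqref{eq:84}, the quantity $\phi_\e$ expressed in blow-up variables becomes $\int_\Sigma \widetilde u_\e(1+\frac{t-1}{\e}\,,\,x')\psi_1^\Sigma(x')\,dx'$, up to the factor $\e$ coming from $dx'=\e^{N-1}dx'$ versus the normalization. Evaluating at a point $t$ at $O(\e)$ distance from $1$, i.e. $t=1+\e(s-1)$ for fixed $s$, and using the $C^2_{\mathrm{loc}}$ convergence $\widetilde u_\e\to\frac{\partial u_0}{\partial x_1}(\mathbf e_1)\Phi$ from Lemma~\ref{l:blowright} together with the convergence of the associated sections, I would obtain that $\e^{-1}e^{-\frac{\sqrt{\lambda_1(\Sigma)}}{\e}(t-1)}\phi_\e(t)$, evaluated along $t=1+\e(s-1)$, converges to $\frac{\partial u_0}{\partial x_1}(\mathbf e_1)\int_\Sigma\Phi(s,x')\psi_1^\Sigma(x')\,dx'$ as $\e\to0^+$, which at $s=1$ gives exactly the claimed constant $\frac{\partial u_0}{\partial x_1}(\mathbf e_1)\int_\Sigma\Phi(1,x')\psi_1^\Sigma(x')\,dx'$. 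The content of the matching is that the $e^{-\frac{\sqrt{\lambda_1(\Sigma)}}{\e}(t-1)}$ prefactor exactly cancels the growing exponential $a_\e e^{\frac{\sqrt{\lambda_1(\Sigma)}}{\e}t}$ term, while the decaying term $b_\e e^{-\frac{\sqrt{\lambda_1(\Sigma)}}{\e}t}$, after multiplication by $\e^{-1}e^{-\frac{\sqrt{\lambda_1(\Sigma)}}{\e}(t-1)}$, is of order $\e^{-1}e^{-2\sqrt{\lambda_1(\Sigma)}/\e}$ and hence negligible; so from the ODE solution $\e^{-1}e^{-\frac{\sqrt{\lambda_1(\Sigma)}}{\e}(t-1)}\phi_\e(t)\to \lim_{\e}\e^{-1}e^{-\frac{\sqrt{\lambda_1(\Sigma)}}{\e}}a_\e$, a constant \emph{independent of} $t$, which is then identified by the blow-up computation and is consistent with any fixed $x_0\in(0,1)$.

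Finally I would assemble: for the fixed $x_0\in(0,1)$,
\[
\e^{-1} e^{-\frac{\sqrt{\lambda_1(\Sigma)}}{\e}(x_0-1)}\sqrt{\widetilde H_\e(x_0)}
= \frac{\sqrt{\widetilde H_\e(x_0)}}{\phi_\e(x_0)}\cdot
\e^{-1} e^{-\frac{\sqrt{\lambda_1(\Sigma)}}{\e}(x_0-1)}\phi_\e(x_0),
\]
where the first factor tends to $1$ by Corollary~\ref{c:hphi} and the second to $\frac{\partial u_0}{\partial x_1}(\mathbf e_1)\int_\Sigma\Phi(1,x')\psi_1^\Sigma(x')\,dx'$ by the ODE-plus-matching argument; this gives the conclusion. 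The main obstacle I anticipate is the matching step: making rigorous the passage from the fixed-$x_0$ regime to the blow-up regime at $O(\e)$ distance from the junction, i.e. justifying that the exponentially growing coefficient $a_\e$ has a finite limit (after the $\e^{-1}e^{-\sqrt{\lambda_1(\Sigma)}/\e}$ renormalization) by evaluating $\phi_\e$ in rescaled variables and invoking the strong/$C^2_{\mathrm{loc}}$ convergence of $\widetilde u_\e$, while simultaneously controlling the decaying mode $b_\e$. Care is also needed to verify that the weight term genuinely vanishes in the ODE for $\phi_\e$ on $(0,1)$, which is where assumption \eqref{eq:p2} (that $p=0$ on $\mathcal C_1$) is used, and to handle the geometry of $\Sigma_\e$ versus $\Sigma$ under rescaling.
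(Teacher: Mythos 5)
Your overall strategy is the same as the paper's: reduce to $\phi_\e$ via Corollary \ref{c:hphi}, use that $u_\e$ is harmonic in the corridor (assumption \eqref{eq:p2}) to get $\phi_\e''=\frac{\lambda_1(\Sigma)}{\e^2}\phi_\e$ on $(0,1)$, write $\phi_\e(t)=A_\e e^{\frac{\sqrt{\lambda_1(\Sigma)}}{\e}(t-1)}+B_\e e^{-\frac{\sqrt{\lambda_1(\Sigma)}}{\e}(t-1)}$, and identify the coefficient of the growing mode by matching at the right junction through Lemma \ref{l:blowright} (in the paper this is Steps 1--4, including the identity $e^{\rho\sqrt{\lambda_1(\Sigma)}}\int_\Sigma\Phi(1-\rho,x')\psi_1^\Sigma\,dx'=\int_\Sigma\Phi(1,x')\psi_1^\Sigma\,dx'$, which rests on the decay of $\Phi$ as $x_1\to-\infty$). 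Up to that point your sketch is sound, modulo the minor point that the paper evaluates the blow-up limit on sections $x_1=1-R\e$ strictly inside the tube and then transports the value to $x_1=1$ via that identity, rather than evaluating at the junction cross-section directly.

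The genuine gap is your treatment of the second mode at the fixed point $x_0$. After multiplying by $\e^{-1}e^{-\frac{\sqrt{\lambda_1(\Sigma)}}{\e}(x_0-1)}$, its contribution is $\frac{B_\e}{\e}\,e^{2\frac{\sqrt{\lambda_1(\Sigma)}}{\e}(1-x_0)}$, and since $x_0<1$ the exponential factor is exponentially \emph{large}; your claim that this term is ``of order $\e^{-1}e^{-2\sqrt{\lambda_1(\Sigma)}/\e}$ and hence negligible'' presupposes a super-exponential bound on $B_\e$ that nothing in your argument provides. The matching at the right junction only yields $B_\e/\e\to 0$ (equivalently $C_\e\to0$), with no rate, because the convergence $\widetilde u_\e\to\frac{\partial u_0}{\partial x_1}(\mathbf e_1)\Phi$ in Lemma \ref{l:blowright} is not quantified; an error of size, say, $\e^2$ in $B_\e$ would still be destroyed by $e^{2\sqrt{\lambda_1(\Sigma)}(1-x_0)/\e}$. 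This is precisely what Step 5 of the paper's proof supplies, and it cannot be obtained from information near $x_1=1$ alone: the paper uses the \emph{interior} blow-up at $x_0$ (Lemma \ref{lemma_blow_up_canale} together with Corollary \ref{c:hphi}), which gives $\phi_\e(\e(\overline x-1)+x_0)/\phi_\e(x_0)\to e^{\sqrt{\lambda_1(\Sigma)}(\overline x-1)}$, and then argues by contradiction: if $B_\e\e^{-1}e^{-2\frac{\sqrt{\lambda_1(\Sigma)}}{\e}(x_0-1)}\not\to0$, then along a subsequence $\frac{A_\e}{B_\e}e^{2\frac{\sqrt{\lambda_1(\Sigma)}}{\e}(x_0-1)}$ is bounded, and the limit of the above ratio would not equal $e^{\sqrt{\lambda_1(\Sigma)}(\overline x-1)}$ for $\overline x\neq1$. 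Without this (or an equivalent) argument, the final assembly of your proof — asserting that $\e^{-1}e^{-\frac{\sqrt{\lambda_1(\Sigma)}}{\e}(t-1)}\phi_\e(t)$ has a $t$-independent limit — is unjustified.
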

\begin{pf}
 By virtue of Corollary \ref{c:hphi}, it is sufficient to prove that
\begin{equation}\label{eq:asphieps}
\lim_{\e\to0^+} \e^{-1} e^{-\frac{\sqrt{\lambda_1(\Sigma)}}{\e}(x_0-1)}\phi_\e(x_0)
= \dfrac{\partial u_0}{\partial x_1}({\mathbf e}_1) \,
\int_{\Sigma}\Phi(1,x')\psi_1^\Sigma(x')dx'.
\end{equation}
Recalling that, from \eqref{problema} and \eqref{eq:p2}, $u_\e$ is
harmonic in the corridor, $\phi_\e$ satisfies 
\begin{equation*}
(\phi_\e)''(t)=\frac{\lambda_1(\Sigma)}{\e^2}\phi_\e(t),\quad\text{for
all }t\in(0,1),
\end{equation*}
which can be rewritten as
\begin{equation*}
 \left(e^{\frac{2}{\e}\sqrt{\lambda_1(\Sigma)}(t-1)}\bigg(e^{-\frac{\sqrt{\lambda_1(\Sigma)}}{\e}(t-1)}\phi_\e(t)\bigg)'\right)'=0
 \quad\text{in }(0,1). 
\end{equation*}
Hence 
\begin{align}
  \label{eq_Veronica_phi} &
  \bigg(e^{-\frac{\sqrt{\lambda_1(\Sigma)}}{\e}(t-1)}\phi_\e(t)\bigg)'
  =C_\e e^{-\frac{2}{\e}\sqrt{\lambda_1(\Sigma)}(t-1)},\quad\text{for
    all }t\in(0,1) \\
  \label{eq_Laura_phi} & \phi_\e(t) = A_\e
  e^{\frac{\sqrt{\lambda_1(\Sigma)}}{\e}(t-1)} + B_\e
  e^{-\frac{\sqrt{\lambda_1(\Sigma)}}{\e}(t-1)},\quad\text{for
    all }t\in(0,1) ,
\end{align}
being $C_\e$, $A_\e$ and $B_\e$ some real constants depending on
$\e$ (and independent of $t$). The proof of  the proposition is
divided in several steps.

\smallskip\noindent
{\bf Step 1:} we claim that 
\begin{equation}\label{eq:CepsBeps}
C_\e = -2\sqrt{\lambda_1(\Sigma)}\,
   \frac{B_\e}{\e}.
 \end{equation}
 Indeed, fixing $h>1$ and integrating
   \eqref{eq_Veronica_phi} over $(1-h\e,1-\e)$, we obtain that
\begin{align*}
 \dfrac{C_\e \e}{2\sqrt{\lambda_1(\Sigma)}}\left(e^{2h\sqrt{\lambda_1(\Sigma)}}-e^{2\sqrt{\lambda_1(\Sigma)}}\right)
&= \int_{1-h\e}^{1-\e} \bigg(e^{-\frac{\sqrt{\lambda_1(\Sigma)}}{\e}(t-1)}\phi_\e(t)\bigg)'dt \\
&= \int_{1-h\e}^{1-\e} \left(A_\e + B_\e e^{-2\frac{\sqrt{\lambda_1(\Sigma)}}{\e}(t-1)}\right)'dt\\
&= B_\e \left(e^{2\sqrt{\lambda_1(\Sigma)}} - e^{2h\sqrt{\lambda_1(\Sigma)}}\right).
\end{align*}

\smallskip\noindent
{\bf Step 2:} we claim that, for every $R,h>0$,
\begin{equation*}
  e^{\sqrt{\lambda_1(\Sigma)}R}\int_{\Sigma}\Phi(1-R,x')\psi_1^\Sigma(x')\,dx' -
  e^{\sqrt{\lambda_1(\Sigma)}h}
  \int_{\Sigma}\Phi(1-h,x')\psi_1^\Sigma(x')\,dx' =0.
\end{equation*}
Let us define 
\begin{equation*}
  \phi:(-\infty,1]\to\R,\quad \phi(t)=\int_{\Sigma}\Phi(t, x')\psi_1^\Sigma(x')dx'.
\end{equation*}
Since $\Phi$ is harmonic on its domain,
$\phi$ solves
\begin{equation*}
\bigg(e^{-\sqrt{\lambda_1(\Sigma)}(t-1)}\phi(t)\bigg)'
  =C e^{-2\sqrt{\lambda_1(\Sigma)}(t-1)},\quad\text{for
    all }t\in(-\infty,1],
\end{equation*}
being $C$ a real constant (independent of $t$). 
Integrating the above equation over $(1-\rho ,1)$, we obtain
\[
\phi(1) - e^{\rho \sqrt{\lambda_1(\Sigma)}}\phi(1-\rho ) =
\dfrac{C}{2\sqrt{\lambda_1(\Sigma)}}\left(e^{2\rho \sqrt{\lambda_1(\Sigma)}}-1\right)\
\text{for
all }\rho >0
\]
and then
\begin{equation}\label{eq:Cj}
C= \dfrac{2\sqrt{\lambda_1(\Sigma)}}{e^{2\rho \sqrt{\lambda_1(\Sigma)}}-1}
\left(\phi(1) - e^{\rho \sqrt{\lambda_1(\Sigma)}}\phi(1-\rho )\right) \quad\text{for
all }\rho >0.
\end{equation}
From \cite[Lemma 2.9 (ii)]{FT12},
$\Phi(x_1,x')=O(e^{\sqrt{\lambda_1(\Sigma)}\frac{x_1-1}{2}})$ as
$x_1\to-\infty$. Hence
 \[
\phi(1-\rho )=\int_\Sigma \Phi(1-\rho ,x')\psi_1^\Sigma(x')\,dx'
=O(e^{-\frac{\rho }{2}\sqrt{\lambda_1(\Sigma)}})
\]
 as
$\rho \to+\infty$. Therefore, letting $\rho \to+\infty$ in \eqref{eq:Cj}
implies that $C=0$.
This yields that 
\begin{equation}\label{lemma2_Phi_1}
e^{\rho \sqrt{\lambda_1(\Sigma)}}\phi(1-\rho )=\phi(1)
\end{equation}  for any $\rho >0$, thus proving the claim.
 
\smallskip\noindent {\bf Step 3:} we claim that $C_\e \to 0$ as $\e
\to 0$. Indeed, fixing $h>R>0$ and  integrating \eqref{eq_Veronica_phi} over
$(1-h\e,1-R\e)$, we obtain that
\[
 e^{R\sqrt{\lambda_1(\Sigma)}}\dfrac{\phi_\e(1-R\e)}{\e} - e^{h\sqrt{\lambda_1(\Sigma)}}\dfrac{\phi_\e(1-h\e)}{\e}
=
\dfrac{C_\e}{2\sqrt{\lambda_1(\Sigma)}}\left(e^{2h\sqrt{\lambda_1(\Sigma)}}-e^{2R\sqrt{\lambda_1(\Sigma)}}\right),
\]
from which, thank to Lemma \ref{l:blowright} and Step 2, it follows that  
\begin{multline*}
  C_\e =
  \dfrac{2\sqrt{\lambda_1(\Sigma)}}{e^{2h\sqrt{\lambda_1(\Sigma)}}-e^{2R\sqrt{\lambda_1(\Sigma)}}}
  \left(e^{R\sqrt{\lambda_1(\Sigma)}}\dfrac{\phi_\e(1-R\e)}{\e} -
    e^{h\sqrt{\lambda_1(\Sigma)}}\dfrac{\phi_\e(1-h\e)}{\e}\right)\\
  \stackrel{\e\to0^+}{\longrightarrow}
  \dfrac{2\sqrt{\lambda_1(\Sigma)}\frac{\partial u_0}{\partial
      x_1}(\mathbf
    e_1)}{e^{2h\sqrt{\lambda_1(\Sigma)}}-e^{2R\sqrt{\lambda_1(\Sigma)}}}
  \bigg(e^{R\sqrt{\lambda_1(\Sigma)}}\int_\Sigma
  \Phi(1-R,x')\psi_1^\Sigma(x')\,dx'\\
  -e^{h\sqrt{\lambda_1(\Sigma)}}\int_\Sigma
  \Phi(1-h,x')\psi_1^\Sigma(x')\,dx'\bigg) =0
\end{multline*}
thanks to the previous step.
 
\smallskip\noindent
{\bf Step 4:} we claim that
\begin{equation}\label{limite_c_eps^1}
\dfrac{A_\e}{\e}\to \dfrac{\partial u_0}{\partial x_1}(\mathbf e_1)\int_\Sigma \Phi(1,x')\psi_1^\Sigma(x')\,dx'
\quad \text{as } \e\to 0^+.
\end{equation}
For all $R>0$, the convergence
\[
\dfrac{\phi_\e(1-R\e)}{\e} \to \dfrac{\partial u_0}{\partial
  x_1}(\mathbf e_1)\int_\Sigma \Phi(1-R,x')
\psi_1^\Sigma(x')\,dx',
\quad \text{as }\e\to 0^+,
\]
which is a consequence of Lemma \ref{l:blowright}, implies, in view
of \eqref{eq_Laura_phi}, that
\[
\dfrac{A_\e}{\e}e^{-R\sqrt{\lambda_1(\Sigma)}} + \dfrac{B_\e}{\e}e^{R\sqrt{\lambda_1(\Sigma)}}
\to \dfrac{\partial u_0}{\partial x_1}(\mathbf e_1)\int_\Sigma \Phi(1-R,x')\psi_1^\Sigma(x')\,dx'
 \text{ as }\e\to 0^+.
\]
Claim \eqref{limite_c_eps^1} follows from the above convergence, Steps 1 and 3, which imply that 
\[
\frac{B_\e}{\e}=-\frac{C_\e}{2\sqrt{\lambda_1(\Sigma)}} = o(1)
\] as
$\e\to0^+$, and \eqref{lemma2_Phi_1}.
 
\smallskip\noindent {\bf Step 5:} we claim that, for every
$x_0\in(0,1)$,
\begin{equation}\label{ultimo_passo_asintotico_corridoio}
  {B_\e}\e^{-1} e^{-2\frac{\sqrt{\lambda_1(\Sigma)}}{\e}(x_0-1)} \to 0 \quad \text{as }\e\to 0^+.
\end{equation}
Let us fix $\overline x\neq 1$. Then, taking into account
\eqref{eq_Laura_phi}, Lemma \ref{lemma_blow_up_canale} and Corollary
\ref{c:hphi}  imply that
\begin{align}\label{equazione_ultimo_passo_asintotico_corridoio}
  e^{\sqrt{\lambda_1(\Sigma)}(\overline x-1)} &=\lim_{\e\to0^+} \dfrac{\phi_\e(\e(\overline x-1)+x_0)}{\phi_\e(x_0)}\\
  &\notag= \lim_{\e\to0^+}\dfrac{\frac{A_\e}{B_\e}
    e^{2\frac{\sqrt{\lambda_1(\Sigma)}}{\e}(x_0-1)}e^{\sqrt{\lambda_1(\Sigma)}(\overline
      x-1)} + e^{-\sqrt{\lambda_1(\Sigma)}(\overline x-1)}
  }{\frac{A_\e}{B_\e}
    e^{2\frac{\sqrt{\lambda_1(\Sigma)}}{\e}(x_0-1)} +1 }.
\end{align}
By contradiction, let us assume that
\eqref{ultimo_passo_asintotico_corridoio} is not true and hence that
there exist $\alpha>0$ and a sequence $\{\e_{n}\}_n\to 0^+$ such that
\[
\abs{{B_{\e_n}}\e_n^{-1}
  e^{-2\frac{\sqrt{\lambda_1(\Sigma)}}{\e_n}(x_0-1)}} \geq
\alpha>0,\quad\text{for all }n,
\]
which, in view of \eqref{limite_c_eps^1}, implies that 
\[
\dfrac{A_{\e_n}}{B_{\e_n}}
  e^{2\frac{\sqrt{\lambda_1(\Sigma)}}{\e_n}(x_0-1)}=O(1)\quad\text{as }n\to+\infty.
\]
Then there exist $\ell\in\R$ and a subsequence $\{\e_{n_k}\}_{k}$ such
that 
\[
\dfrac{A_{\e_{n_k}}}{B_{\e_{n_k}}}
e^{2\frac{\sqrt{\lambda_1(\Sigma)}}{{\e_{n_k}}}(x_0-1)} \to \ell.
\]
If $\ell \neq -1$, then from
\eqref{equazione_ultimo_passo_asintotico_corridoio} 
it follows that 
\[
  e^{\sqrt{\lambda_1(\Sigma)}(\overline x-1)}=\frac{\ell\, e^{\sqrt{\lambda_1(\Sigma)}(\overline
      x-1)} + e^{-\sqrt{\lambda_1(\Sigma)}(\overline x-1)}
  }{\ell +1 },
\]
thus contradicting the fact that $\bar x\neq1$. On the other hand, if
$\ell=-1$ the limit at the second line of
\eqref{equazione_ultimo_passo_asintotico_corridoio}
is  $\pm \infty$, giving again rise to a contradiction.

\smallskip\noindent We are now in position to conclude the proof.
From \eqref{eq_Laura_phi}, \eqref{limite_c_eps^1}, and
\eqref{ultimo_passo_asintotico_corridoio} it follows that
\begin{align*}
 \lim_{\e\to 0^+} &\e^{-1} e^{-\frac{\sqrt{\lambda_1(\Sigma)}}{\e}(x_0-1)} \phi_\e(x_0)\\
&= \lim_{\e\to 0^+} \e^{-1} e^{-\frac{\sqrt{\lambda_1(\Sigma)}}{\e}(x_0-1)}
\left(A_\e e^{\frac{\sqrt{\lambda_1(\Sigma)}}{\e}(x_0-1)} + B_\e e^{-\frac{\sqrt{\lambda_1(\Sigma)}}{\e}(x_0-1)} \right)\\
&= \lim_{\e\to 0^+} \left\{\dfrac{A_\e}{\e} + \dfrac{B_\e}{\e}e^{-2\frac{\sqrt{\lambda_1(\Sigma)}}{\e}(x_0-1)} \right\}
= \dfrac{\partial u_0}{\partial x_1}(\mathbf e_1)\int_\Sigma \Phi(1,x')\psi_1^\Sigma(x')\,dx',
\end{align*}
thus completing the proof.
\end{pf}

\bigskip

In order to come to a further step in our analysis, we find useful to recall some basic facts in \cite{FT12}
concerning the blow-up limit at the left junction.
We define
\begin{align}\label{eq:121}
\widehat{u}_\e:\widehat \Omega^\e\to\R,\quad
\widehat u_\e(x)=\frac{u_\e(\e x)}{\sqrt{\int_\Sigma u_\e^2(\e,\e x')\,dx'}},
\end{align}
where
\begin{equation*}
\widehat \Omega^\e:
=D^-\cup\{(x_1,x')\in T_1:0\leq x_1\leq 1/\e\}\cup
\{(x_1,x'):x_1> 1/\e\}.
\end{equation*}
We observe that $\widehat u_\e$ solves
\begin{equation*}
\begin{cases}
-\Delta \widehat u_\e(x)=\e^2\lambda_\e p(\e x) \widehat u_\e(x),&\text{in }\widehat \Omega^\e,\\
\widehat u_\e=0,&\text{on }\partial\widehat \Omega^\e.
\end{cases}
\end{equation*}
We let $\widehat D$ as in \eqref{eq:hatD}, and consider, for all
$r>0$, $\mathcal H_r$ as defined at page \pageref{pag:defHr}. 
The change of variable $y'=\e x'$ yields
\begin{equation}\label{eq:normalization_eqhat}
\int_{\Sigma}\widehat u_\e^2(1,x')\,dx'=1.
\end{equation}
In \cite[Lemma 5.2 and Corollary 5.5]{FT12}, the following result is proved.
\begin{Proposition}(\cite{FT12})\label{prop:lim_widehat_u}
 For every sequence $\e_{n}\to 0^+$, there 
   exist a subsequence
  $\{\e_{n_k}\}_k$ and $\widehat C\in\R\setminus\{0\}$ such that
  $\widehat u_{\e_{n_k}}\to \widehat C\widehat \Phi$ 
strongly in $\mathcal
  H_r$ for every $r>1$, in $C^2(\overline{B_{r_2}^-\setminus B_{r_1}^-})$
  for all $1<r_1<r_2$, and in $C^2(\{(x_1,x'):t_1\leq x_1\leq t_2,\,
  |x'|\leq 1\})$
  for all $0<t_1<t_2$,
where $\widehat
  \Phi$ is the unique solution to \eqref{eq_Phi_hat}. 
\end{Proposition}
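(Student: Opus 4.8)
The plan is a blow-up argument at the left junction, parallel to the one carried out on the right in Lemmas \ref{l:tilde_u_eps_bound} and \ref{l:blowright}, but with the exponential scaling dictated by the corridor.

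\emph{Uniform bound.} First I would show that for every $r>1$ there is $\e_r>0$ such that $\{\widehat u_\e\}_{\e\in(0,\e_r)}$ is bounded in $\mathcal H_r$. This is the crucial point, and it rests on: (a) the coercivity estimate of \cite[Lemma 3.6]{FT12} (the analogue of Lemma \ref{lemma0_u}), which — after the change of variable $y=\e x$ and using $p\in L^{N/2}$ as in \eqref{eq:p} together with the Sobolev and Poincar\'e inequalities — lets one absorb the potential term $\e^2\lambda_\e\int p(\e x)\widehat u_\e^2$ into an arbitrarily small fraction of $\int|\nabla\widehat u_\e|^2$; (b) the Almgren-type frequency and doubling estimates of \cite{FT12} in the corridor (the ones already invoked for Lemma \ref{lemma_blow_up_canale}), which bound the frequency by $(1+\delta)\sqrt{\lambda_1(\Sigma)}$ and thereby control both $\int_{\widehat\Omega^\e\cap\{x_1<r\}}|\nabla\widehat u_\e|^2$ in terms of $\int_\Sigma\widehat u_\e^2(r,x')\,dx'$ and the growth of $t\mapsto\int_\Sigma u_\e^2(t,\e x')\,dx'$; (c) the normalization \eqref{eq:normalization_eqhat}, which fixes the scale $\int_\Sigma\widehat u_\e^2(1,x')\,dx'=1$.

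\emph{Extraction, limiting equation, strong convergence.} Given the bound, a diagonal extraction over $r\in\N$ yields a subsequence $\e_{n_k}$ and a function $\widehat U\in\bigcup_{r>1}\mathcal H_r$ with $\widehat u_{\e_{n_k}}\weakly\widehat U$ weakly in $\mathcal H_r$ for every $r>1$. Since $\widehat u_\e$ solves $-\Delta\widehat u_\e=\e^2\lambda_\e p(\e x)\widehat u_\e$ and, by assumption \eqref{eq:p2}, $p$ vanishes on a fixed neighbourhood of the junctions, for every $\varphi\in C^\infty_{\rm c}(\widehat D)$ the term $\int\e^2\lambda_\e p(\e x)\widehat u_\e\varphi$ is identically zero once $\e$ is small enough; hence $\widehat U$ is harmonic in $\widehat D$ and vanishes on $\partial\widehat D$. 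Classical elliptic estimates upgrade the convergence to $C^2$ on $\overline{B^-_{r_2}\setminus B^-_{r_1}}$ for $1<r_1<r_2$ and on $\{t_1\le x_1\le t_2,\ |x'|\le1\}$ for $0<t_1<t_2$. Strong $\mathcal H_r$ convergence then follows by the usual device: multiply the equation by $\widehat u_{\e_{n_k}}$, integrate by parts over $\widehat\Omega^{\e_{n_k}}\cap\{x_1<r\}$, and note that the boundary term $\int_\Sigma\partial_{x_1}\widehat u_{\e_{n_k}}(r,x')\,\widehat u_{\e_{n_k}}(r,x')\,dx'$ converges, by the $C^2$ convergence on $\{x_1=r\}$, to $\int_\Sigma\partial_{x_1}\widehat U(r,x')\,\widehat U(r,x')\,dx'=\int_{\widehat\Omega_r}|\nabla\widehat U|^2$, while the potential term is negligible by (a); hence $\|\widehat u_{\e_{n_k}}\|_{\mathcal H_r}\to\|\widehat U\|_{\mathcal H_r}$.

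\emph{Identification and $\widehat C\neq0$.} To invoke the uniqueness of $\widehat\Phi$ from \eqref{eq_Phi_hat} (\cite[Lemma 2.7]{FT12}) I must exhibit a constant $\widehat C$ with $\widehat U-\widehat C\,h\in\mathcal D^{1,2}(\widehat D)$. From the uniform bound, $\widehat U$ has finite Dirichlet energy in $D^-$. In the half-tube $\widehat U$ is harmonic with zero lateral trace, so $\widehat U(x_1,x')=\sum_j\big(a_je^{\sqrt{\lambda_j(\Sigma)}x_1}+b_je^{-\sqrt{\lambda_j(\Sigma)}x_1}\big)\psi_j^\Sigma(x')$ for $x_1\ge1$; the frequency bound $\le\sqrt{\lambda_1(\Sigma)}$ passes to the limit exactly as in the proof of Lemma \ref{lemma_blow_up_canale}, and via \cite[Lemma 2.5]{FT12} it forces $a_j=0$ for all $j\ge2$, so that $\widehat U-a_1h$ decays exponentially as $x_1\to+\infty$ and therefore belongs to $\mathcal D^{1,2}$. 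Setting $\widehat C=a_1$ yields $\widehat U=\widehat C\widehat\Phi$. Finally, if $a_1=0$ then $\widehat U\in\mathcal D^{1,2}(\widehat D)$ is harmonic and vanishes on $\partial\widehat D$, hence $\widehat U\equiv0$; but passing \eqref{eq:normalization_eqhat} to the limit by the $C^2(\Sigma)$ convergence on $\{x_1=1\}$ gives $\int_\Sigma\widehat U^2(1,x')\,dx'=1$, a contradiction. Thus $\widehat C\neq0$, and since $\widehat\Phi$ is independent of the extracted subsequence, so is the limit. I expect the main obstacle to be the uniform energy bound of the first step, together with the frequency control that pins the exponential growth rate of $\widehat U$ in the tube; everything else is soft.
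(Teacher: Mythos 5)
This proposition is not proved in the present paper at all: it is quoted verbatim from \cite{FT12} (Lemma 5.2 and Corollary 5.5 there), and your blow-up sketch --- uniform $\mathcal H_r$ bound via the coercivity and Almgren frequency estimates in the corridor, diagonal extraction, harmonicity of the limit because $p$ vanishes near the junction by \eqref{eq:p2}, identification with $\widehat C\,\widehat\Phi$ through the uniqueness statement \eqref{eq_Phi_hat} together with the Fourier expansion in the half-tube, and $\widehat C\neq 0$ from passing the normalization \eqref{eq:normalization_eqhat} to the limit --- follows essentially the same route as that source and as the parallel argument given here in the proof of Lemma \ref{lemma_blow_up_canale}. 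At the level of detail offered (the uniform energy bound and the frequency control near the left junction being delegated to the estimates of \cite{FT12}, exactly as the paper itself does), I consider the proposal correct and in line with the cited proof.
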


\noindent
The following lemma ensures that the constant $\widehat C$ in
Proposition \ref{prop:lim_widehat_u} is
positive.
\begin{Lemma}\label{l:hatCpositive}
  Let  $\{\e_{n}\}_n$, 
  $\{\e_{n_k}\}_k$ and $\widehat C$ as in 
Proposition \ref{prop:lim_widehat_u}. Then $\widehat C>0$.
\end{Lemma}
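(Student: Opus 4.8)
The plan is to argue by contradiction, assuming $\widehat C<0$, and to reproduce near the left junction the competitor construction used to prove $C>0$ in Lemma~\ref{lemma_blow_up_canale}. First I would transfer the negativity of $\widehat C$ to a cross-section of the tube: by \eqref{eq:prop_hat_phi}, $\widehat\Phi>0$ in $\widehat D$, so $\widehat\Phi(1,\cdot)>0$ on $\Sigma$ and hence $\widehat C\widehat\Phi(1,\cdot)<0$ on $\Sigma$; since by Proposition~\ref{prop:lim_widehat_u} $\widehat u_{\e_{n_k}}(1,\cdot)\to\widehat C\widehat\Phi(1,\cdot)$ in $C^2(\overline\Sigma)$ while $\widehat u_{\e_{n_k}}(1,\cdot)$ vanishes on $\partial\Sigma$ (as $u_{\e_{n_k}}=0$ on the lateral boundary of $\mathcal C_{\e_{n_k}}$), arguing exactly as in the corresponding step of Lemma~\ref{lemma_blow_up_canale} (using the Hopf lemma near $\partial\Sigma$) one obtains, for $k$ large,
\[
u_{\e_{n_k}}(\e_{n_k},x')\leq 0\qquad\text{for all }x'\in\R^{N-1}\text{ with }|x'|<\e_{n_k};
\]
in particular $u^+_{\e_{n_k}}$ vanishes identically on the cross-section $\{(\e_{n_k},x'):|x'|<\e_{n_k}\}$ of the tube.

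Next I would combine this with the positivity of $u_\e$ near the junction. By \cite[Corollary~1.3]{FT12}, for every small $r>0$ there is $\e_r>0$ with $u_\e>0$ on $\Gamma_r^-$ for all $\e\in(0,\e_r)$; a diagonal selection then yields a subsequence $\{\e_{n_{k_j}}\}_j$ with $\e_{n_{k_j}}<1/j$ such that $u_{\e_{n_{k_j}}}>0$ on $\Gamma^-_{1/j}$ while $u^+_{\e_{n_{k_j}}}\equiv 0$ on the cross-section of the tube at $x_1=\e_{n_{k_j}}$. One then defines, as in the proof of Lemma~\ref{lemma_blow_up_canale} but cutting the tube at $x_1=\e_{n_{k_j}}$ instead of at a fixed $x_0\in(0,1)$,
\[
v_j=\begin{cases}
u_{\e_{n_{k_j}}},&\text{in }D^-\setminus B_{1/j}^-,\\[2pt]
u^+_{\e_{n_{k_j}}},&\text{in }B_{1/j}^-\cup\big\{(x_1,x')\in\mathcal C_{\e_{n_{k_j}}}:0\leq x_1\leq\e_{n_{k_j}}\big\},\\[2pt]
0,&\text{in }\big\{(x_1,x')\in\Omega^{\e_{n_{k_j}}}:x_1>\e_{n_{k_j}}\big\},
\end{cases}
\]
which, thanks to the sign information just collected, is continuous and hence belongs to $\Di$ after trivial extension.

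Since $p\equiv0$ on $B_3^-\cup\mathcal C_1\cup B_3^+$ by \eqref{eq:p2}, using $v_j$ as a test function in \eqref{problema} with $\e=\e_{n_{k_j}}$ gives, for $j$ large,
\[
\int_{\R^N}|\nabla v_j|^2\,dx=\lambda_{\e_{n_{k_j}}}\int_{\R^N}p\,v_j^2\,dx=\lambda_{\e_{n_{k_j}}}\int_{D^-\setminus B_3^-}p\,u_{\e_{n_{k_j}}}^2\,dx,
\]
and the last integral is strictly positive by unique continuation (as $u_{\e_{n_{k_j}}}$ does not vanish on any open subset of $D^-$ and $p\not\equiv0$ in $D^-$). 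Then $\widetilde v_j:=v_j\big(\int_{\R^N}p\,v_j^2\,dx\big)^{-1/2}$ satisfies $\int_{\R^N}p\,\widetilde v_j^2\,dx=1$ and $\int_{\R^N}|\nabla\widetilde v_j|^2\,dx=\lambda_{\e_{n_{k_j}}}\to\lambda_{k_0}(D^+)$, so $\{\widetilde v_j\}_j$ is bounded in $\Di$; by the compactness of the embedding $\Di\hookrightarrow L^2(\R^N,p\,dx)$, along a subsequence $\widetilde v_j\rightharpoonup\widetilde v$ in $\Di$ and $\widetilde v_j\to\widetilde v$ in $L^2(p\,dx)$, with $\int_{\R^N}p\,\widetilde v^2\,dx=1$, so $\widetilde v\not\equiv0$. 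Because the tube segment retained in $v_j$ collapses to $\{{\mathbf 0}\}$, which has zero $H^1$-capacity since $N\geq3$, the limit $\widetilde v$ is supported in $\overline{D^-}$ and vanishes on $\partial D^-\setminus\{{\mathbf 0}\}$, whence $\widetilde v\in{\mathcal D}^{1,2}(D^-)$; passing to the limit in $-\Delta\widetilde v_j=\lambda_{\e_{n_{k_j}}}p\,\widetilde v_j$ (valid on $D^-\setminus\overline{B_{1/j}^-}$) yields $-\Delta\widetilde v=\lambda_{k_0}(D^+)p\,\widetilde v$ in $D^-$ with $\widetilde v=0$ on $\partial D^-$. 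Hence $\lambda_{k_0}(D^+)\in\sigma_p(D^-)$, contradicting \eqref{eq:54}, and therefore $\widehat C>0$.

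I expect the main obstacle to be the first step: turning $\widehat C<0$ into a pointwise sign for $u_{\e_{n_k}}$ on the physical cross-section $\{x_1=\e_{n_k}\}$ up to the lateral boundary of the tube, and then checking that the glued competitor $v_j$ genuinely lies in $\Di$ while its tube part degenerates to the single junction point ${\mathbf 0}$. Both points are, however, entirely parallel to the analogous step already carried out in the proof of Lemma~\ref{lemma_blow_up_canale}, so the argument should be essentially bookkeeping.
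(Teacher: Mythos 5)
Your argument is correct and is essentially the paper's own proof: contradiction from $\widehat C<0$, transfer of the sign through the $C^2$ convergence of $\widehat u_{\e_{n_k}}$ to $\widehat C\widehat\Phi$ with $\widehat\Phi>0$, selection of a sub-subsequence via \cite[Corollary 1.3]{FT12}, a glued competitor equal to $u_{\e_{n_{k_j}}}$ far from the junction, to $u^+_{\e_{n_{k_j}}}$ in an intermediate region and to $0$ beyond a cut where $u_{\e_{n_{k_j}}}\leq 0$, normalized in $L^2(p\,dx)$ and passed to the limit to produce an eigenfunction on $D^-$ with eigenvalue $\lambda_{k_0}(D^+)$, contradicting \eqref{eq:54}. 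The only (immaterial) difference is the location of the cut: you cut at the tube cross-section $\{x_1=\e_{n_{k_j}}\}$, mimicking the proof of Lemma \ref{lemma_blow_up_canale}, while the paper cuts on the hemisphere $\Gamma^-_{2\e_{n_{k_j}}}$ inside $D^-$, setting $v_j=0$ on $B^-_{2\e_{n_{k_j}}}$ and on the whole tube.
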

\begin{pf}
Let us assume by contradiction that $\widehat C<0$. Since $\widehat u_{\e_{n_k}}\to \widehat C\widehat \Phi$ 
strongly in $C^2(\Gamma_2^-)$, we have that, if $k$ is sufficiently
large, then  $u_{\e_{n_k}}<0$ on $\Gamma_{2\e_{n_k}}^-$.
 
From \cite[Corollary 1.3]{FT12}, there exists a sub-subsequence
$\{\e_{n_{k_j}}\}_j$  such that 
\[
2\e_{n_{k_j}}<\frac1j,\quad 
u_{\e_{n_{k_j}}}>0 \text{ on }\Gamma^-_{1/j},\quad 
\text{and}\quad
u_{\e_{n_{k_j}}}<0 \text{ on }\Gamma_{2\e_{n_{k_j}}}^-
.
\]
 Therefore, for $j$ large, the functions
\begin{equation*}
  v_j=\begin{cases}
    u_{\e_{n_{k_j}}},&\text{in }D^-\setminus B_{1/j}^-,\\
    u ^+_{\e_{n_{k_j}}},&\text{in } B_{1/j}^-\setminus B_{2\e_{n_{k_j}}}^-
,\\
    0,&\text{in } B_{2\e_{n_{k_j}}}^-\cup\{(x_1,x')\in \Omega^{\e_{n_{k_j}}}:\, x_1\geq0\},
  \end{cases}
\quad
\widetilde v_j := \dfrac{v_{j}}{\big(\int_{\R^N}p\,v_{j}^2dx\big)^{\!\frac12}}
\end{equation*}
are well-defined and belong to $\Di$ (if trivially extended to the whole $\R^N$).

Let $A_j=D^-\setminus B_{1/j}^-$. Then $\widetilde v_j$
satisfies
\begin{equation*}
\begin{cases}
-\Delta \widetilde v_j=\lambda_{\e_{n_{k_j}}} p \widetilde v_j,&\text{in }A_j,\\
\widetilde v_j=0,&\text{on }\partial A_j\cap\partial D^-,\\
\int_{\R^N}p\,\widetilde v_j^2\,dx=1.
\end{cases}
\end{equation*}
Testing equation \eqref{problema} for $\e=\e_{n_{k_j}}$ with $v_j$ we
obtain 
\[
\int_{D^-\setminus B_{2\e_{n_{k_j}}}^-}|\nabla v_j|^2dx= 
\lambda_{\e_{n_{k_j}}}\int_{D^-\setminus B_{2\e_{n_{k_j}}}^-}pv_j^2dx,
\] 
hence 
\begin{equation*}
  \int_{\R^N}p \widetilde v_j^2dx=\int_{D^-\setminus
    B_{2\e_{n_{k_j}}}^-}\!\!\!\!\!p \widetilde
  v_j^2dx=1,\ 
\int_{\R^N}|\nabla \widetilde
  v_j|^2dx=\int_{D^-\setminus B_{2\e_{n_{k_j}}}^-}\!\!\!\!\!|\nabla
  \widetilde v_j|^2dx= \lambda_{\e_{n_{k_j}}}.
\end{equation*}
Hence $\{\widetilde v_j\}_{j}$ is bounded in
$\Di$ and, along a subsequence, $\Di$-weakly converges to some
$\widetilde v\in\Di$ such that $\int_{\R^n}p\widetilde
v^2\,dx=1$, $\mathop{\rm supp}\widetilde v\subset D^-$, and 
\begin{equation*}
\begin{cases}
-\Delta \widetilde v=\lambda_{k_0}(D^+) p \widetilde v,&\text{in }D^-,\\
\widetilde v =0,&\text{on }\partial D^-,
\end{cases}
\end{equation*}
thus implying that $\lambda_{k_0}(D^+)\in \sigma_p(D^-)$ and contradicting assumption \eqref{eq:54}.
\end{pf}

We can conclude that the convergence in Proposition
\ref{prop:lim_widehat_u} is not up to subsequences,
since the limit can be univocally characterized by virtue of
\eqref{eq:normalization_eqhat} and Lemma \ref{l:hatCpositive}. Indeed,
passing to the limit in  \eqref{eq:normalization_eqhat}, we obtain that
\begin{equation*}
 \widehat C ^2= \left(\int_\Sigma \widehat\Phi^2(1,x')dx'\right)^{\!\!-1},
\end{equation*}
and hence by Lemma \ref{l:hatCpositive} we conclude that 
\begin{equation}\label{eq:widehat_C}
 \widehat C = \frac1{\sqrt{\int_{\Sigma}\widehat\Phi^2(1,x')dx'}}.
\end{equation}
Therefore we can improve Proposition \ref{prop:lim_widehat_u} as
follows.
\begin{Proposition}\label{prop:lim_widehat_u_improved}
As $\e\to0^+$, 
\[
\widehat u_{\e}\to
\frac{\widehat \Phi}{\sqrt{\int_{\Sigma}\widehat\Phi^2(1,x')dx'}}
\] 
strongly in $\mathcal H_r$ for every $r>1$,
in $C^2(\{(x_1,x'):t_1\leq x_1\leq t_2,\, |x'|\leq 1\})$ for all
$0<t_1<t_2$, and in
$C^2(\overline{B_{r_2}^-\setminus B_{r_1}^-})$ for all $1<r_1<r_2$, where $\widehat \Phi$ is the unique solution
to~\eqref{eq_Phi_hat}.
\end{Proposition}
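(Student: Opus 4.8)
The plan is to deduce full convergence as $\e\to0^+$ from the subsequential convergence of Proposition~\ref{prop:lim_widehat_u}, now that the subsequential limit has been completely determined. First I would fix an arbitrary sequence $\e_n\to0^+$ and apply Proposition~\ref{prop:lim_widehat_u} to extract a subsequence $\{\e_{n_k}\}_k$ and $\widehat C\in\R\setminus\{0\}$ with $\widehat u_{\e_{n_k}}\to\widehat C\widehat\Phi$ in the three topologies appearing in the statement. The $C^2$ convergence on a cylinder $\{t_1\le x_1\le t_2,\ |x'|\le1\}$ containing the slice $\{x_1=1\}$ lets me pass to the limit in the normalization \eqref{eq:normalization_eqhat}, giving $\widehat C^2\int_\Sigma\widehat\Phi^2(1,x')\,dx'=1$; Lemma~\ref{l:hatCpositive} fixes the sign, so $\widehat C$ is the constant in \eqref{eq:widehat_C}. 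In particular the subsequential limit $\widehat\Phi/\sqrt{\int_\Sigma\widehat\Phi^2(1,x')\,dx'}$ depends neither on the sequence $\{\e_n\}_n$ nor on the subsequence $\{\e_{n_k}\}_k$.

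Second, I would invoke the elementary subsequence criterion for convergence in a metrizable space: a family converges to $L$ as $\e\to0^+$ precisely when every sequence $\e_n\to0^+$ has a subsequence along which convergence to $L$ holds. Since $\mathcal H_r$ is a Hilbert space, and the spaces $C^2(\overline{B_{r_2}^-\setminus B_{r_1}^-})$ and $C^2$ on each compact cylinder are Banach (the clause ``for all $t_1<t_2$'' being just convergence in the Fr\'echet space $C^2_{\mathrm{loc}}$ of the corresponding strip), all the topologies at stake are metrizable and the criterion applies verbatim in each of them. Combined with the first step, this yields $\widehat u_\e\to\widehat\Phi/\sqrt{\int_\Sigma\widehat\Phi^2(1,x')\,dx'}$ as $\e\to0^+$ in all the asserted senses.

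There is no real obstacle here: the analytic content — uniform bounds, compactness, the identification of the limit as a multiple of $\widehat\Phi$ through problem \eqref{eq_Phi_hat}, and the positivity of $\widehat C$ via the contradiction with the nondegeneracy assumption \eqref{eq:54} — has already been established in Proposition~\ref{prop:lim_widehat_u} and Lemma~\ref{l:hatCpositive}. The only genuinely new remark, that the limit in \eqref{eq:normalization_eqhat} pins down $|\widehat C|$, is immediate from the $C^2$ convergence on the cylinder through $x_1=1$; everything else is the routine packaging of coincident subsequential limits into an honest limit.
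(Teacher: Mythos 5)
Your argument is correct and follows the same route as the paper: the paper likewise passes to the limit in the normalization \eqref{eq:normalization_eqhat} using the $C^2$ convergence near the slice $\{x_1=1\}$ to get $\widehat C^2\int_\Sigma\widehat\Phi^2(1,x')\,dx'=1$, fixes the sign by Lemma~\ref{l:hatCpositive}, and concludes by the uniqueness of the subsequential limit. Nothing is missing.
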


As a further step in our analysis, we evaluate the asymptotic
behavior as $\e\to0^+$ of the function $\widetilde H_\e$ defined in
\eqref{Htilde} at $\e$-distance from the left junction in the
corridor. To this aim, the following lemma is required.
\begin{Lemma}\label{l:ashatPhi}
Let    $\widehat\Phi$ be the unique solution to
\eqref{eq_Phi_hat}. Then
\[
\widehat\Phi(x_1,x')=e^{\sqrt{\lambda_1(\Sigma)}x_1}\psi_1^\Sigma(x')
+O(e^{-\sqrt{\lambda_1(\Sigma)}\frac{x_1}2})
\] 
as $x_1\to+\infty$ uniformly with respect to $x'\in\Sigma$.
\end{Lemma}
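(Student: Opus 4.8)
The plan is to localize the analysis to the half-infinite cylinder $\mathcal C^+:=\{(x_1,x')\in\R\times\Sigma:\ x_1>0\}$, which by \eqref{eq:hatD} is exactly $\widehat D\cap\{x_1>0\}$. There $\widehat\Phi$ is harmonic and, by elliptic regularity, smooth up to the smooth lateral boundary $\{x_1>0\}\times\partial\Sigma$, on which it vanishes; the same is true of $h(x_1,x')=e^{\sqrt{\lambda_1(\Sigma)}x_1}\psi_1^\Sigma(x')$, and, by \eqref{eq_Phi_hat}, the difference $v:=\widehat\Phi-h$ has finite Dirichlet energy on $T_1\supset\mathcal C^+$. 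It therefore suffices to prove that $\|v(x_1,\cdot)\|_{L^\infty(\Sigma)}=O(e^{-\sqrt{\lambda_1(\Sigma)}x_1})$ as $x_1\to+\infty$, which is stronger than the stated bound.

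The first step is to split off the lowest transversal mode. Set $v_1(x_1):=\int_\Sigma v(x_1,x')\psi_1^\Sigma(x')\,dx'$ and $w:=v-v_1(x_1)\psi_1^\Sigma(x')$, so that $w(x_1,\cdot)$ is $L^2(\Sigma)$-orthogonal to $\psi_1^\Sigma$ for every $x_1>0$. Differentiating under the integral sign, using $\Delta\widehat\Phi=\Delta h=0$ and integrating by parts in $x'$ (the boundary terms on $\partial\Sigma$ vanish since $v(x_1,\cdot),\psi_1^\Sigma\in H^1_0(\Sigma)$), one gets $v_1''=\lambda_1(\Sigma)\,v_1$ on $(0,+\infty)$. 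On the other hand, Parseval's identity applied slicewise together with $\int_{\mathcal C^+}|\nabla v|^2<+\infty$ yields $\int_0^{+\infty}\bigl(|v_1'|^2+\lambda_1(\Sigma)\,v_1^2\bigr)\,dx_1<+\infty$, which rules out the exponentially growing solution of $v_1''=\lambda_1(\Sigma)v_1$ and forces $v_1(x_1)=b_1\,e^{-\sqrt{\lambda_1(\Sigma)}x_1}$ for some $b_1\in\R$. This is the step where hypothesis \eqref{eq_Phi_hat} — the finiteness of $\int_{T_1}|\nabla(\widehat\Phi-h)|^2$ — is indispensable, and it is the conceptual heart of the lemma; the rest is routine elliptic analysis. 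The one technical point here is the legitimacy of the mode decomposition (differentiation under the integral, Parseval over slices), which follows from the smoothness of $v$ in $\mathcal C^+$ up to the lateral boundary.

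Next I would control $w$. From $v_1''=\lambda_1(\Sigma)v_1$ one checks that $\Delta w=0$ in $\mathcal C^+$, that $w=0$ on the lateral boundary, and that $\int_{\mathcal C^+}|\nabla w|^2<+\infty$. A Caccioppoli-type estimate then applies: with $E_w(t):=\int_{\{x_1>t\}\cap\mathcal C^+}|\nabla w|^2$, integration by parts in $x_1$ — the boundary term at $+\infty$ being killed by $E_w(t)\to0$ along a suitable sequence — gives $E_w(t)=-\int_\Sigma w(t,\cdot)\,\partial_{x_1}w(t,\cdot)\,dx'$, and Young's inequality combined with the Poincaré inequality on the orthogonal complement of $\psi_1^\Sigma$, namely $\lambda_2(\Sigma)\int_\Sigma w(t,\cdot)^2\le\int_\Sigma|\nabla_{x'}w(t,\cdot)|^2$ (with $\lambda_2(\Sigma)$ the second Dirichlet eigenvalue of $-\Delta_{x'}$ on $\Sigma$), yields $E_w'(t)\le-2\sqrt{\lambda_2(\Sigma)}\,E_w(t)$, hence $E_w(t)\le E_w(1)\,e^{-2\sqrt{\lambda_2(\Sigma)}(t-1)}$. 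Integrating the Poincaré inequality over a unit $x_1$-slab then gives $\int_t^{t+1}\|w(x_1,\cdot)\|_{L^2(\Sigma)}^2\,dx_1\le C\,e^{-2\sqrt{\lambda_2(\Sigma)}t}$; together with $\|v_1(x_1)\psi_1^\Sigma\|_{L^2(\Sigma)}^2=b_1^2\,e^{-2\sqrt{\lambda_1(\Sigma)}x_1}$ and $\lambda_1(\Sigma)<\lambda_2(\Sigma)$ this gives $\int_{(t,t+1)\times\Sigma}v^2\,dx\le C\,e^{-2\sqrt{\lambda_1(\Sigma)}t}$. Finally, since $v$ is harmonic in the slab $(t,t+1)\times\Sigma$ and vanishes on the smooth lateral part of its boundary, standard interior and boundary elliptic estimates — with constants independent of $t$ by translation invariance in $x_1$ — give $\sup_{(t+1/4,t+3/4)\times\Sigma}|v|\le C\bigl(\int_{(t,t+1)\times\Sigma}v^2\,dx\bigr)^{1/2}\le C'\,e^{-\sqrt{\lambda_1(\Sigma)}t}$. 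Writing $\widehat\Phi=h+v$ we conclude that $\widehat\Phi(x_1,x')=e^{\sqrt{\lambda_1(\Sigma)}x_1}\psi_1^\Sigma(x')+O(e^{-\sqrt{\lambda_1(\Sigma)}x_1})$ uniformly in $x'\in\Sigma$ as $x_1\to+\infty$, which in particular implies the stated estimate. The only genuinely delicate points are the mode decomposition discussed above and the vanishing of the boundary term at infinity in the Caccioppoli identity; everything else is standard.
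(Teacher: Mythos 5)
Your argument is correct, and it proves more than is asked: the mode-splitting $v=\widehat\Phi-h=v_1(x_1)\psi_1^\Sigma+w$, the ODE $v_1''=\lambda_1(\Sigma)v_1$ with finite slicewise energy forcing $v_1(x_1)=b_1e^{-\sqrt{\lambda_1(\Sigma)}\,x_1}$, the Saint--Venant/Caccioppoli decay $E_w(t)\le E_w(1)e^{-2\sqrt{\lambda_2(\Sigma)}(t-1)}$ for the part orthogonal to $\psi_1^\Sigma$, and the uniform-in-$t$ slab elliptic estimates are all sound, and together they give the sharper rate $O(e^{-\sqrt{\lambda_1(\Sigma)}\,x_1})$ uniformly in $x'$. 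The paper proceeds quite differently and much more briefly: it sets $g=\widehat\Phi-h\ge 0$ (the sign coming from \eqref{eq:prop_hat_phi}, which you do not use at all), observes that $f(x_1,x')=e^{-\sqrt{\lambda_1(\Sigma)}\frac{x_1}{2}}\psi_1^\Sigma(x'/2)$ is an explicit positive harmonic barrier with finite energy in $T_1^+$, bounded away from zero on the face $\{x_1=0\}$, and concludes $0\le g\le C f$ by the maximum principle; this is why the lemma is stated with the halved exponent $e^{-\sqrt{\lambda_1(\Sigma)}x_1/2}$, which is all that is needed later (only $e^{-h\sqrt{\lambda_1(\Sigma)}}\hat\phi(h)\to\widehat C$ is used in Step 2 of Proposition \ref{lemma_asintotico_H_epsilon_epsilon}). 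So the comparison is: the paper's barrier argument is shorter but relies on the ordering $\widehat\Phi\ge h$ and on comparison principles in an unbounded domain, and yields only the $x_1/2$ rate; your Fourier/energy argument is longer but needs only the finiteness of $\int_{T_1}|\nabla(\widehat\Phi-h)|^2$ from \eqref{eq_Phi_hat}, is insensitive to signs, and produces the optimal exponent. Both are complete proofs of the stated lemma.
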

\begin{proof}
Let $g:T_1^+\to\R$,
$g(x_1,x')=\widehat\Phi(x_1,x')-e^{\sqrt{\lambda_1(\Sigma)}x_1}\psi_1^\Sigma(x')$.
From \eqref{eq_Phi_hat} and \eqref{eq:prop_hat_phi} it follows that
$\int_{T_1^+}(|\nabla
g|^2+|g|^{2^*})<+\infty$, $g\geq 0$ in $T_1^+$, 
$g=0$ on $\{(x_1,x'):x_1>0,|x'|=1\}$, and $g$ weakly solves $-\Delta g=0$ in $T_1^+$.
Let 
$f(x_1,x')=e^{-\sqrt{\lambda_1(\Sigma)}\frac{x_1}2}\psi_1^\Sigma\big({x'}/2\big)$;
we notice that $f$
is harmonic and strictly positive in $T_1^+$, bounded from below away
from $0$ on $\{(x_1,x'):x_1=0,\,|x'|\leq1\}$, and $\int_{T_1^+}(|\nabla
f|^2+|f|^{2^*})<+\infty$. Hence, from the Maximum Principle we deduce
that $g(x)\leq {\rm const\,}f(x)$ in $T_1^+$, thus implying the
conclusion.
\end{proof}

We are now in position to provide the asymptotics of $\widetilde H_\e$
at $\e$-distance from the left junction in the corridor.
\begin{Proposition}\label{lemma_asintotico_H_epsilon_epsilon}
  Let $\widetilde H_\e$be as in \eqref{Htilde}. Then
 \begin{equation}
\label{asintotico_H_epsilon_epsilon}
\lim_{\e\to 0} \e^{-1} e^{\frac{\sqrt{\lambda_1(\Sigma)}}{\e}} \sqrt{\widetilde H_\e(\e)}
= \dfrac{\partial u_0}{\partial x_1}({\mathbf e_1})
\bigg(\int_{\Sigma}\Phi(1,x')\psi_1^\Sigma(x')dx'\bigg)
\sqrt{\int_{\Sigma}\widehat\Phi^2(1,x')dx'},
\end{equation}
being $u_0$ as in \eqref{eq:u0}, 
$\Phi$ the unique solution to \eqref{eq_Phi_1}, and $\widehat\Phi$ the unique solution to \eqref{eq_Phi_hat}.
\end{Proposition}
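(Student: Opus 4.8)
The plan is to connect $\widetilde H_\e(\e)$ to $\widetilde H_\e(x_0)$ for a fixed $x_0\in(0,1)$, for which Proposition \ref{asintotico_H_epsilon_canale} already provides an asymptotic formula, and then to show that the ratio $\widetilde H_\e(\e)/\widetilde H_\e(x_0)$ behaves, after the appropriate exponential rescaling, like the square of the $L^2$-mass of $\widehat\Phi$ on the section $\Sigma$. The bridge between the two scales is the blow-up analysis of $\widehat u_\e$ at the left junction, which is now fully nonsubsequential thanks to Proposition \ref{prop:lim_widehat_u_improved}.

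First I would use Corollary \ref{c:hphi} (or rather its analogue at $x_1=1$ in the left blow-up) together with the definition \eqref{eq:121} of $\widehat u_\e$ to relate the denominator $\int_\Sigma u_\e^2(\e,\e x')\,dx'$ appearing there to $\widetilde H_\e(\e)$. Indeed, by \eqref{Htilde}, $\widetilde H_\e(\e)=\int_\Sigma u_\e^2(\e,\e x')\,dx'$, so $\widehat u_\e(x)=u_\e(\e x)/\sqrt{\widetilde H_\e(\e)}$ and the normalization \eqref{eq:normalization_eqhat} is automatic. Next, for a fixed $x_0\in(0,1)$ write $x_0/\e$ for the $x_1$-coordinate in the rescaled corridor and note
\begin{equation*}
\frac{\widetilde H_\e(x_0)}{\widetilde H_\e(\e)}=\int_\Sigma \widehat u_\e^2\Big(\tfrac{x_0}{\e},x'\Big)\,dx'.
\end{equation*}
Here is the crux: $x_0/\e\to+\infty$, so this is not a fixed-compact-set convergence and Proposition \ref{prop:lim_widehat_u_improved} cannot be applied directly at that point. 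The role of Lemma \ref{l:ashatPhi} is precisely to control $\widehat\Phi$ at large $x_1$: it gives $\widehat\Phi(x_1,x')=e^{\sqrt{\lambda_1(\Sigma)}x_1}\psi_1^\Sigma(x')+O(e^{-\sqrt{\lambda_1(\Sigma)}x_1/2})$, hence
\begin{equation*}
\int_\Sigma \widehat\Phi^2(x_1,x')\,dx'=e^{2\sqrt{\lambda_1(\Sigma)}x_1}\big(1+O(e^{-\frac32\sqrt{\lambda_1(\Sigma)}x_1})\big)\qquad\text{as }x_1\to+\infty,
\end{equation*}
using $\|\psi_1^\Sigma\|_{L^2(\Sigma)}=1$. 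So the plan is: combine Proposition \ref{prop:lim_widehat_u_improved} (uniform $C^2$ convergence on slabs $t_1\le x_1\le t_2$) with this exponential asymptotic to deduce
\begin{equation*}
e^{-\frac{2\sqrt{\lambda_1(\Sigma)}}{\e}x_0}\,\frac{\widetilde H_\e(x_0)}{\widetilde H_\e(\e)}\ \longrightarrow\ \frac{1}{\int_\Sigma\widehat\Phi^2(1,x')\,dx'}\qquad\text{as }\e\to0^+.
\end{equation*}
The technical point to handle carefully is the exchange of the two limits $\e\to0$ and $x_1=x_0/\e\to\infty$; I would do this by inserting an intermediate slab: fix a large $R>1$, use the $C^2$ convergence of $\widehat u_\e$ on $\{R\le x_1\le R+1\}$ to compare $\widetilde H_\e$ at scale $R\e$ with $\int_\Sigma\widehat\Phi^2(R,\cdot)$, then propagate from $R\e$ up to $x_0$ inside the corridor. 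The propagation from $R\e$ to $x_0$ can be carried out exactly as in the proof of Proposition \ref{asintotico_H_epsilon_canale}, because $u_\e$ is harmonic in the corridor, so the function $t\mapsto\int_\Sigma u_\e(t,\e x')\psi_1^\Sigma(x')\,dx'$ and the monotonicity identity $\widetilde H_\e'/\widetilde H_\e=\frac2\e\mathcal N_\e$ (from \cite[Lemma 3.20]{FT12}) govern the ratio; alternatively, one can directly invoke that $\widetilde H_\e(\e R)/\widetilde H_\e(\e)\to\int_\Sigma\widehat\Phi^2(R,\cdot)$ and that $\widetilde H_\e(x_0)/\widetilde H_\e(\e R)$ has the asymptotics coming from Step 5 of the proof of Proposition \ref{asintotico_H_epsilon_canale} (the subleading $B_\e$-term is negligible), and let $R\to\infty$ using Lemma \ref{l:ashatPhi} to see the limit is independent of $R$.

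Finally, I would assemble the pieces. From Proposition \ref{asintotico_H_epsilon_canale},
\begin{equation*}
\e^{-1}e^{-\frac{\sqrt{\lambda_1(\Sigma)}}{\e}(x_0-1)}\sqrt{\widetilde H_\e(x_0)}\ \longrightarrow\ \frac{\partial u_0}{\partial x_1}(\mathbf e_1)\int_\Sigma\Phi(1,x')\psi_1^\Sigma(x')\,dx',
\end{equation*}
and from the step above,
\begin{equation*}
e^{-\frac{\sqrt{\lambda_1(\Sigma)}}{\e}x_0}\,\frac{\sqrt{\widetilde H_\e(x_0)}}{\sqrt{\widetilde H_\e(\e)}}\ \longrightarrow\ \frac{1}{\sqrt{\int_\Sigma\widehat\Phi^2(1,x')\,dx'}}.
\end{equation*}
Dividing the first convergence by the second and using $e^{-\frac{\sqrt{\lambda_1(\Sigma)}}{\e}(x_0-1)}\big/e^{-\frac{\sqrt{\lambda_1(\Sigma)}}{\e}x_0}=e^{\frac{\sqrt{\lambda_1(\Sigma)}}{\e}}$ yields exactly \eqref{asintotico_H_epsilon_epsilon}. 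Note that $x_0$ disappears from the final formula, as it must. The main obstacle, as indicated, is the non-compact limit in the $x_1$-direction: one must prove that the $C^2_{\rm loc}$ blow-up convergence of Proposition \ref{prop:lim_widehat_u_improved} can be upgraded to control $\int_\Sigma\widehat u_\e^2(x_0/\e,\cdot)$ divided by the expected exponential factor, uniformly enough to pass to the limit; Lemma \ref{l:ashatPhi} together with the harmonicity-driven ODE for $\widetilde H_\e$ in the corridor is precisely what makes this rigorous, and the rest is bookkeeping of exponential factors.
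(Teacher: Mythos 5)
Your proposal is correct and follows essentially the same route as the paper: both arguments rest on the exact two-mode representation \eqref{eq_Laura_phi} of the first transversal Fourier coefficient $\phi_\e$ in the corridor (propagation from the fixed scale $x_0$ down to the $\e$-scale), on the non-subsequential blow-up of Proposition \ref{prop:lim_widehat_u_improved} to match the profile at slab height $O(1)$ from the left junction, and on Lemma \ref{l:ashatPhi} to push the matching height to infinity, the only difference being organizational — you divide the asymptotics of Proposition \ref{asintotico_H_epsilon_canale} by the ratio $\widetilde H_\e(x_0)/\widetilde H_\e(\e)$, whereas the paper's Steps 1--3 identify the limits of the normalized coefficients $A_\e/\e$ and $B_\e e^{\sqrt{\lambda_1(\Sigma)}/\e}/\sqrt{\widetilde H_\e(\e)}$ and compute the limit at $t=\e$ directly. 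The only phrase needing care is ``the subleading $B_\e$-term is negligible'': at the intermediate scale $\e R$ it is not negligible for fixed $R$ (indeed $B_\e e^{\sqrt{\lambda_1(\Sigma)}/\e}/\sqrt{\widetilde H_\e(\e)}$ has a finite and generally nonzero limit, which is the content of the paper's Step 3), its relative weight being only $O(e^{-2\sqrt{\lambda_1(\Sigma)}R})$, so the $R\to\infty$ limit you invoke via Lemma \ref{l:ashatPhi} is precisely what makes the step rigorous.
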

\begin{proof}
Let $\phi_\e$ as in \eqref{eq:phieps}, $C_\e$ as in
\eqref{eq_Veronica_phi}, and $A_\e,B_\e$ as in
\eqref{eq_Laura_phi}. We proceed by steps.

\smallskip\noindent
{\bf Step 1:} we claim that
\begin{equation}\label{primo_passo_lemma_asintotico_H_epsilon_epsilon}
  \lim_{\e\to0^+} \e^{-1}
  e^{-\frac{\sqrt{\lambda_1(\Sigma)}}{\e}(\e-1)} \phi_\e(\e)-
  \dfrac{B_\e}{\e} e^{-2\frac{\sqrt{\lambda_1(\Sigma)}}{\e}(\e-1)}=
  \dfrac{\partial u_0}{\partial x_1}({\mathbf e}_1) \, \int_{\Sigma}\Phi(1,x')\psi_1^\Sigma(x')dx'.
\end{equation}
To prove \eqref{primo_passo_lemma_asintotico_H_epsilon_epsilon}, let
us fix $x_0\in(0,1)$ and integrate \eqref{eq_Veronica_phi}
over $(\e,x_0)$.  For $\e$ sufficiently small, $x_0 >\e$. We obtain that
\begin{align*}
  \e^{-1} e^{-\frac{\sqrt{\lambda_1(\Sigma)}}{\e}(x_0-1)} \phi_\e(x_0)
  - &\e^{-1} e^{-\frac{\sqrt{\lambda_1(\Sigma)}}{\e}(\e-1)} \phi_\e(\e)\\
&  = \dfrac{C_\e}{2\sqrt{\lambda_1(\Sigma)}}
  \left(e^{-2\frac{\sqrt{\lambda_1(\Sigma)}}{\e}(\e-1)} -
    e^{-2\frac{\sqrt{\lambda_1(\Sigma)}}{\e}(x_0-1)}\right).
\end{align*}
Hence \eqref{primo_passo_lemma_asintotico_H_epsilon_epsilon} follows
from \eqref{eq:CepsBeps}, \eqref{ultimo_passo_asintotico_corridoio},
and \eqref{eq:asphieps}.

\smallskip\noindent
{\bf Step 2:}
we claim that, for every $h>0$,
\begin{equation}\label{secondo_passo_lemma_asintotico_H_epsilon_epsilon}
 e^{-h\sqrt{\lambda_1(\Sigma)}}\hat\phi(h) = \widehat C - \widehat C
 e^{-2h\sqrt{\lambda_1(\Sigma)}} +
\hat\phi(0)e^{-2h\sqrt{\lambda_1(\Sigma)}}
\end{equation}
where $\widehat C$ is as in \eqref{eq:widehat_C} and
\begin{equation}\label{eq:defphipicchat}
  \hat\phi:[0,+\infty)\to\R,\quad \hat\phi(t)=\widehat C
  \int_{\Sigma}\widehat \Phi(t, x')\psi_1^\Sigma(x')dx'.
\end{equation}
Since $\widehat \Phi$ is harmonic on its domain,
$\hat \phi$ solves
\begin{equation*}
\bigg(e^{-\sqrt{\lambda_1(\Sigma)}t}\hat\phi(t)\bigg)'
  =C e^{-2\sqrt{\lambda_1(\Sigma)}t},\quad\text{for
    all }t\in[0,+\infty),
\end{equation*}
being $C$ a real constant (independent of $t$). 
Integrating the above equation over $(0,h)$, we obtain
\[
e^{-h \sqrt{\lambda_1(\Sigma)}}\hat\phi(h)-\hat\phi(0)
 =
\dfrac{C}{2\sqrt{\lambda_1(\Sigma)}}\left(1-e^{-2h \sqrt{\lambda_1(\Sigma)}}\right)\
\text{for
all }h>0
\]
and then
\begin{equation}\label{eq:Cjbis}
C= \dfrac{2\sqrt{\lambda_1(\Sigma)}}{1-e^{-2h \sqrt{\lambda_1(\Sigma)}} }
\left(
e^{-h \sqrt{\lambda_1(\Sigma)}}\hat\phi(h)-\hat\phi(0)\right) \quad\text{for
all }h >0.
\end{equation}
From Lemma \ref{l:ashatPhi} it follows that
$e^{-h\sqrt{\lambda_1(\Sigma)}}\hat\phi(h)=\widehat C+o(1)$ as $h\to
+\infty$, then letting $h\to+\infty$ in \eqref{eq:Cjbis} we obtain that
$C=2\sqrt{\lambda_1(\Sigma)}\left(\widehat C - \hat\phi(0)\right)$ and
then
\[
e^{-h \sqrt{\lambda_1(\Sigma)}}\hat\phi(h)
 =\big(\widehat C - \hat\phi(0)\big)\Big(1-e^{-2h
    \sqrt{\lambda_1(\Sigma)}}\Big)
+\hat\phi(0)
\quad
\text{for
all }h>0
\]
which yields claim \eqref{secondo_passo_lemma_asintotico_H_epsilon_epsilon}.

\smallskip\noindent
{\bf Step 3:} we claim that
\begin{equation}\label{terzo_passo_lemma_asintotico_H_epsilon_epsilon}
 \lim_{\e\to 0^+} \dfrac{B_\e e^{\frac{\sqrt{\lambda_1((\Sigma)}}{\e}}}{\big(\widetilde H_\e(\e)\big)^{1/2}}
= \hat\phi(0)-\widehat C.
\end{equation}
To prove \eqref{terzo_passo_lemma_asintotico_H_epsilon_epsilon} we follow
the scheme of Proposition \ref{asintotico_H_epsilon_canale}.
We fix $k>1$ and integrate  \eqref{eq_Veronica_phi} over $(\e,k\e)$,
thus obtaining
\begin{multline*}
  e^{-\frac{\sqrt{\lambda_1(\Sigma)}}{\e} (k\e-1)}\phi_\e(k\e)
  - e^{-\frac{\sqrt{\lambda_1(\Sigma)}}{\e} (\e-1)}\phi_\e(\e)\\
  = \dfrac{C_\e \e}{2\sqrt{\lambda_1(\Sigma)}}
  \left(e^{-2\frac{\sqrt{\lambda_1(\Sigma)}}{\e}(\e-1)} -
    e^{-2\frac{\sqrt{\lambda_1(\Sigma)}}{\e}(k\e-1)}\right)
\end{multline*}
i.e., in view of \eqref{eq:CepsBeps},
\begin{equation*} 
e^{-\sqrt{\lambda_1(\Sigma)}
    k}\dfrac{\phi_\e(k\e)}{\sqrt{\widetilde H_\e(\e)}} -
  e^{-\sqrt{\lambda_1(\Sigma)}}\dfrac{\phi_\e(\e)}{\sqrt{\widetilde H_\e(\e)}}
  = \dfrac{B_\e e^{\frac{\sqrt{\lambda_1(\Sigma)}}{\e}}}{\sqrt{\widetilde
      H_\e(\e)}} \left(e^{-2k\sqrt{\lambda_1(\Sigma)}} -
    e^{-2\sqrt{\lambda_1(\Sigma)})}\right),
\end{equation*}
which, in view of \eqref{eq:phieps}, Proposition
\ref{prop:lim_widehat_u_improved}, \eqref{eq:defphipicchat}, and 
\eqref{secondo_passo_lemma_asintotico_H_epsilon_epsilon}, implies that
\begin{align*}
\dfrac{B_\e e^{\frac{\sqrt{\lambda_1(\Sigma)}}{\e}}}{\sqrt{\widetilde
    H_\e(\e)}} \left(e^{-2k\sqrt{\lambda_1(\Sigma)}} -
  e^{-2\sqrt{\lambda_1(\Sigma)})}\right)
&\to e^{-\sqrt{\lambda_1(\Sigma)} k} \hat\phi(k) - e^{-\sqrt{\lambda_1(\Sigma)}} \hat\phi(1) \\
  &\quad= \Big(e^{-2k\sqrt{\lambda_1(\Sigma)}} - e^{-2\sqrt{\lambda_1(\Sigma)})}\Big) \big(\hat\phi(0)-\widehat C\big)
\end{align*}
as $\e\to0^+$, thus proving claim \eqref{terzo_passo_lemma_asintotico_H_epsilon_epsilon}.

In order to conclude the proof, we observe that from
\eqref{primo_passo_lemma_asintotico_H_epsilon_epsilon} it follows
\begin{multline*}
\lim_{\e\to 0^+}  \sqrt{\widetilde H_\e(\e)} \e^{-1}
  e^{\frac{\sqrt{\lambda_1(\Sigma)}}{\e}} \left(
    e^{-\sqrt{\lambda_1(\Sigma)}}\dfrac{\phi_\e(\e)}{\sqrt{\widetilde
        H_\e(\e)}}
    - e^{-2\sqrt{\lambda_1(\Sigma)}}\dfrac{B_\e e^{\frac{\sqrt{\lambda_1(\Sigma)}}{\e}}}{\sqrt{\widetilde H_\e(\e)}} \right)\\
 =\frac{\partial u_0}{\partial x_1}({\mathbf e}_1) \,
  \int_{\Sigma}\Phi(1,x')\psi_1^\Sigma(x')dx'
\end{multline*}
which is sufficient to conclude
in view of \eqref{eq:phieps}, \eqref{eq:121}, Proposition
\ref{prop:lim_widehat_u_improved}, \eqref{eq:defphipicchat}, 
\eqref{terzo_passo_lemma_asintotico_H_epsilon_epsilon}
and~\eqref{secondo_passo_lemma_asintotico_H_epsilon_epsilon}.
\end{proof}

\noindent
We are now in position to derive an asymptotics for the normalization $(\int_{\Gamma^-_{\tilde
k}}u_{\e_{n_j}}^2d\sigma)^{1/2}$.
\begin{Proposition}\label{lemma_asintotico_H_epsilon_meno_R}
Let $\tilde k$ as in Theorem \ref{t:main}. Then 
\begin{multline}\label{eq:3}
   \lim_{\e\to0^+} e^{\frac{\sqrt{\lambda_1(\Sigma)}}{\e}}\e^{-N}\sqrt{\int_{\Gamma_{\widetilde
          k}^-} u_{\e}^2\,d\sigma}\\=
\sqrt{\int_{\Gamma^-_{\tilde
          k}}\overline{U}^2d\sigma}
\bigg(  \int_{\SN_-}\widehat \Phi(\theta)\Psi^-(\theta)d\sigma(\theta)\bigg)
\bigg(\int_{\Sigma}\Phi(1,x')\psi_1^\Sigma(x')dx'\bigg)
\dfrac{\partial u_0}{\partial x_1}({\mathbf e_1}).
\end{multline}
\end{Proposition}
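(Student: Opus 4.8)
The plan is to connect the three relevant length scales by following, on spheres centred at ${\mathbf 0}$, the component of $u_\e$ along the first Dirichlet eigenfunction $\Psi^-$ of $-\Delta_{\SN}$ on $\SN_-$. Set $\psi_\e(r)=\int_{\SN_-}u_\e(r\theta)\Psi^-(\theta)\,d\sigma(\theta)$. By \eqref{eq:p2}, $u_\e$ is harmonic on $B_3^-$; since $\Psi^-$ vanishes on $\partial\SN_-$ and, for $r>\e$, $r\theta\in\partial\Omega^\e$ whenever $\theta\in\partial\SN_-$, the usual spherical computation gives $\psi_\e''+\tfrac{N-1}{r}\psi_\e'-\tfrac{N-1}{r^2}\psi_\e=0$ on $(\e,3)$, whence $\psi_\e(r)=a_\e r+b_\e r^{1-N}$ on $(\e,3)$ for some $a_\e,b_\e\in\R$. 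Applied to $\overline U$ (harmonic and $C^2$ on $B_3^-\setminus\{{\mathbf 0}\}$, vanishing on $\partial D^-\setminus\{{\mathbf 0}\}$) the same argument yields $\overline\psi(r):=\int_{\SN_-}\overline U(r\theta)\Psi^-(\theta)\,d\sigma=\bar a\,r+r^{1-N}$ on $(0,3)$, the coefficient of $r^{1-N}$ being $1$ because $\lambda^{N-1}\overline U(\lambda\theta)\to\Psi^-(\theta)$ in $C^0(\SN_-)$ (Lemma \ref{l:barU}) and $\int_{\SN_-}(\Psi^-)^2\,d\sigma=1$; and applied to $\widehat\Phi$ (harmonic on $D^-\subseteq\widehat D$, vanishing on $\{x_1=0,\ |x'|\geq1\}$) it gives $\widehat\psi(\rho):=\int_{\SN_-}\widehat\Phi(\rho\theta)\Psi^-(\theta)\,d\sigma=\widehat a\,\rho+\widehat b\,\rho^{1-N}$ for $\rho>1$, where $\widehat a=0$ by the decay estimate \eqref{eq:stimahatPhiinfty} and hence, by continuity, $\widehat b=\widehat\psi(1)=\int_{\SN_-}\widehat\Phi(\theta)\Psi^-(\theta)\,d\sigma$.

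\textbf{Step at a fixed distance in $D^-$.} Put $L_\e:=\big(\int_{\Gamma^-_{\widetilde k}}u_\e^2\,d\sigma\big)^{1/2}$ and $\ell:=\big(\int_{\Gamma^-_{\widetilde k}}\overline U^2\,d\sigma\big)^{1/2}$. Since $\psi_\e(r)/L_\e=\int_{\SN_-}U_\e(r\theta)\Psi^-(\theta)\,d\sigma$, Proposition \ref{p:conUeps_nosottosucc} gives $\psi_\e(r)/L_\e\to\overline\psi(r)/\ell$ for every fixed $r\in(0,3)$. Evaluating this at two radii $r_1\neq r_2$ in $(0,3)$ and inverting the corresponding $2\times2$ (Vandermonde-type) linear system in $(a_\e/L_\e,\,b_\e/L_\e)$, we obtain in particular $b_\e/L_\e\to 1/\ell$, i.e. $L_\e/b_\e\to\ell$ (and $b_\e>0$ for $\e$ small).

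\textbf{Step at $\e$-distance in the corridor.} Recalling from \eqref{eq:121} and \eqref{Htilde} that $\widehat u_\e(x)=u_\e(\e x)/(\widetilde H_\e(\e))^{1/2}$, one has $\psi_\e(\e\rho)=(\widetilde H_\e(\e))^{1/2}\int_{\SN_-}\widehat u_\e(\rho\theta)\Psi^-(\theta)\,d\sigma$, so Proposition \ref{prop:lim_widehat_u_improved} yields, for each fixed $\rho>1$, $\psi_\e(\e\rho)/(\widetilde H_\e(\e))^{1/2}\to\widehat C\,\widehat\psi(\rho)=\widehat C\,\widehat b\,\rho^{1-N}$, with $\widehat C=\big(\int_\Sigma\widehat\Phi^2(1,x')\,dx'\big)^{-1/2}$ as in \eqref{eq:widehat_C}. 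On the other hand $\psi_\e(\e\rho)/(\widetilde H_\e(\e))^{1/2}=\frac{a_\e\e}{(\widetilde H_\e(\e))^{1/2}}\,\rho+\frac{b_\e\e^{1-N}}{(\widetilde H_\e(\e))^{1/2}}\,\rho^{1-N}$ for $\rho>1$; comparing the $\rho$ and $\rho^{1-N}$ coefficients (again using two values of $\rho$ and inverting) gives $\frac{a_\e\e}{(\widetilde H_\e(\e))^{1/2}}\to 0$ and $\frac{b_\e\e^{1-N}}{(\widetilde H_\e(\e))^{1/2}}\to\widehat C\int_{\SN_-}\widehat\Phi(\theta)\Psi^-(\theta)\,d\sigma$.

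\textbf{Conclusion.} It then suffices to write $e^{\sqrt{\lambda_1(\Sigma)}/\e}\,\e^{-N}L_\e=\big(\e^{-1}e^{\sqrt{\lambda_1(\Sigma)}/\e}(\widetilde H_\e(\e))^{1/2}\big)\cdot\frac{L_\e}{b_\e}\cdot\frac{b_\e\e^{1-N}}{(\widetilde H_\e(\e))^{1/2}}$ and let $\e\to0^+$: the first factor converges by Proposition \ref{lemma_asintotico_H_epsilon_epsilon}, the second to $\ell$, the third to $\widehat C\int_{\SN_-}\widehat\Phi(\theta)\Psi^-(\theta)\,d\sigma$; since $\widehat C\big(\int_\Sigma\widehat\Phi^2(1,x')\,dx'\big)^{1/2}=1$, the factor $\big(\int_\Sigma\widehat\Phi^2(1,x')\,dx'\big)^{1/2}$ produced by Proposition \ref{lemma_asintotico_H_epsilon_epsilon} cancels and the product is exactly the right-hand side of \eqref{eq:3}; subsequence-independence is inherited from the three propositions used. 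I expect the main obstacle to be the identification $\widehat b=\int_{\SN_-}\widehat\Phi(\theta)\Psi^-(\theta)\,d\sigma$, which forces one to use the harmonicity and the sharp decay of $\widehat\Phi$ together with the fact that $\widehat\psi$ satisfies the Euler equation only for $\rho>1$ (for $\rho<1$ the relevant boundary term on $\partial\SN_-$, arising from the mouth of the tube where $\widehat\Phi\neq0$, does not vanish); a secondary, routine point is verifying that the linear terms $a_\e r$, resp. $a_\e\e\rho$, are asymptotically negligible against $b_\e r^{1-N}$, resp. $b_\e\e^{1-N}\rho^{1-N}$, at the scales involved.
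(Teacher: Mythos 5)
Your proof is correct and follows essentially the same route as the paper: project $u_\e$, $\overline U$ and $\widehat\Phi$ onto $\Psi^-$, solve the resulting radial ODE in the form $a_\e r+b_\e r^{1-N}$, identify the coefficients both against $\overline U$ at fixed scale (via Proposition \ref{p:conUeps_nosottosucc}) and against $\widehat C\,\widehat\Phi$ at scale $\e$ (via Proposition \ref{prop:lim_widehat_u_improved}), and then combine with Proposition \ref{lemma_asintotico_H_epsilon_epsilon} exactly as in \eqref{eq:3}. The only difference is cosmetic: where the paper passes through the constant $d_\e$ and a boundedness-of-$\alpha_\e/\beta_\e$ subsequence/contradiction argument to identify the coefficient limits, you invert a fixed invertible $2\times2$ system obtained by evaluating at two radii, which is a legitimate (and arguably cleaner) way to reach the same conclusions.
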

\begin{proof}
We divide the proof in several steps.

\smallskip\noindent
{\bf Step 1:} we claim that,
for every $h>k>1$,
\begin{equation}\label{eq_primo_passo_asintotico_H_meno_R}
\frac{\hat v(h)}{h^{1-N}}=\frac{\hat v(k)}{k^{1-N}}=\hat v(1)
\end{equation}
where
\begin{equation*}
  \hat v:[1,+\infty)\to\R,\quad \hat v(r):=\widehat C
  \int_{\SN_-}\widehat \Phi(r\theta)\Psi^-(\theta)d\sigma(\theta),
\end{equation*}
with $\widehat \Phi$ being the unique solution to \eqref{eq_Phi_hat}
and $\widehat C$ as in \eqref{eq:widehat_C}.
Since $\widehat \Phi$ is harmonic on its domain,
$\hat v$ solves
\begin{equation*}
 \bigg(r^{N+1}\Big(\dfrac{\hat v}{r}\Big)'\bigg)'=0,\quad\text{in }(1,+\infty),
\end{equation*}
hence, by integration, there exists $C\in\R$ (independent of $h,k$) such that 
\begin{equation*}
 \dfrac{\hat v(h)}{h} =  \dfrac{\hat v(k)}{k}+\dfrac{C}{N}(k^{-N}-h^{-N}),\quad\text{for all }1<k<h.
\end{equation*}
Hence 
\[C= \frac{N}{k^{-N}-h^{-N}} \bigg( \frac{\hat v(h)}{h} -\frac{\hat
  v(k)}{k} \bigg),\quad\text{for all }
1<k<h.
\]
From \eqref{eq:stimahatPhiinfty}, it follows that $\frac{\hat v(h)}{h}
\to 0$ as $h\to +\infty$. Hence $C= -N \frac{\hat v(k)}{k^{1-N}}$ and claim
\eqref{eq_primo_passo_asintotico_H_meno_R} is proved.

\smallskip\noindent
{\bf Step 2:} for all $r\in(\e,3)$ let us define
\begin{equation*}
  \varphi_\e^-(r)=\int_{\SN_-}u_\e(r\theta)\Psi^-(\theta)\,d\sigma(\theta). 
\end{equation*}
From \eqref{problema} and \eqref{eq:p2} it follows that $\varphi_\e^-$
satisfies 
\begin{equation}\label{eq:1phiepsmeno}
\bigg(r^{N+1}\Big(\frac{\varphi_{\e}^-(r)}{r}\Big)'\bigg)'=0, \quad
\text{in } (\e,3),
\end{equation}
and 
hence there exists a constant $d_{\e}$ (depending on $\e$ but
independent of $r$) such that 
\begin{equation}\label{eq:eqdeps}
\Big(\frac{\varphi_{\e}^-(r)}{r}\Big)'=\frac{d_{\e}}{r^{N+1}}, \quad
\text{in } (\e,3).
\end{equation}
 We claim that, for every $k>1$,
\begin{equation}\label{eq_secondo_passo_asintotico_H_meno_R}
  \lim_{\e\to 0^+}\dfrac{d_\e}{N \e^{N-1} \sqrt{\widetilde H_\e(\e)} }=- \frac{\hat v(k)}{k^{1-N}}.
\end{equation}
Integration of  \eqref{eq:eqdeps} in $(k\e,h\e)$ for $h>k>1$ yields
\begin{equation}\label{eq:hkeps}
 \dfrac{\varphi_{\e}^-(h\e)}{h} - \frac{\varphi_{\e}^-(k\e)}{k} =
 \dfrac{d_{\e}}{N\e^{N-1}} \left(k^{-N}-h^{-N}\right),
\quad\text{for all }1<k<h<\frac3\e,
\end{equation}
and then
\begin{equation}\label{eq:depsas}
\dfrac{d_\e}{N\e^{N-1}\sqrt{\widetilde H_\e(\e)}}
= \dfrac{1}{k^{-N}- h^{-N}} \left( \dfrac{1}{h} \dfrac{\varphi_\e^-(h\e)}{\sqrt{\widetilde H_\e(\e)}}
- \frac1k\dfrac{\varphi_\e^-(k\e)}{\sqrt{\widetilde H_\e(\e)}} \right). 
\end{equation}
Since $\frac{\varphi_\e^-(r\e)}{\sqrt{\widetilde
    H_\e(\e)}}=\int_{\SN_-}\widehat
u_\e(r\theta)\Psi^-(\theta)\,d\sigma(\theta)$ for all $r>1$, from Proposition
\ref{prop:lim_widehat_u_improved} it follows that 
\[
\lim_{\e\to 0^+}\frac{\varphi_\e^-(r\e)}{\sqrt{\widetilde
    H_\e(\e)}}=\hat v(r), \quad\text{for all }r> 1,
\]
hence passing to the limit as $\e\to 0^+$ in \eqref{eq:depsas} we
obtain 
\[
\lim_{\e\to 0^+}\dfrac{d_\e}{N\e^{N-1}\sqrt{\widetilde H_\e(\e)}}
= \dfrac{1}{k^{-N}- h^{-N}} \bigg( \frac{\hat v(h)}{h}-\frac{\hat v(k)}k\bigg). 
\]
which yields claim \eqref{eq_secondo_passo_asintotico_H_meno_R} in
view of \eqref{eq_primo_passo_asintotico_H_meno_R}.

\smallskip\noindent
{\bf Step 3:} we claim that 
\begin{equation}\label{eq:2}
  \lim_{\e\to0^+}\frac{d_{\e}}{\sqrt{\int_{\Gamma_{\widetilde
          k}^-} u_{\e}^2\,d\sigma}}=-\frac{N}{\sqrt{\int_{\Gamma^-_{\tilde
          k}}\overline{U}^2d\sigma}}.
\end{equation}
From \eqref{eq:1phiepsmeno} it follows that there exist
$\alpha_\e,\beta_\e\in\R$ (depending on $\e$ but independent of $r$) such that
\[ 
\varphi_\e^-(r)=\alpha_\e r +\beta_\e r^{1-N},\quad\text{for all }r\in (\e,3).
\]
From \eqref{eq:eqdeps} it follows that
\begin{equation}\label{eq:betaepsdeps}
\beta_\e=-\frac{d_\e}{N}.
\end{equation}
From Proposition \ref{p:conUeps_nosottosucc} we have that 
\[
\frac{u_\e(x)}{\sqrt{\int_{\Gamma_{\widetilde k}^-} u_\e^2\,d\sigma}
} \to 
 \frac{\overline{U}}{\sqrt{\int_{\Gamma^-_{\tilde k}}\overline{U}^2d\sigma}}
\]
as $\e\to0^+$,
strongly in $\mathcal
  H_t^-$ for every $t>0$
and in $C^2(\overline{B_{t_2}^-\setminus
  B_{t_1}^-})$ for all $0<t_1<t_2$.
Hence, for all $r\in(0,3)$, 
\begin{equation}\label{eq:asphiepsmenonorm}
\lim_{\e\to0^+}\frac{\varphi_\e^-(r)}{\sqrt{\int_{\Gamma_{\widetilde
        k}^-} u_\e^2\,d\sigma}}=
\frac{\overline{\varphi}(r)}{\sqrt{\int_{\Gamma^-_{\tilde k}}\overline{U}^2d\sigma}}
\end{equation}
where 
\[
\overline{\varphi}(r):=\int_{\SN_-}\overline{U} (r\theta)\Psi^-(\theta)\,d\sigma(\theta). 
\]
Since $\overline{U}$ is harmonic in $B_3^-$, it is easy to prove that there exist
$a,b \in\R$ such that
\[ 
\overline{\varphi}(r)=a \,r +b  \,r^{1-N},\quad\text{for all }r\in (0,3).
\]
From \eqref{problema_U} it follows that $b =1$.
Hence \eqref{eq:asphiepsmenonorm} can be rewritten as
\begin{equation}\label{eq:1}
\lim_{\e\to0^+}\frac{\alpha_\e r +\beta_\e r^{1-N}}{\sqrt{\int_{\Gamma_{\widetilde
        k}^-} u_\e^2\,d\sigma}}=
\frac{a \,r +r^{1-N}}{\sqrt{\int_{\Gamma^-_{\tilde
        k}}\overline{U}^2d\sigma}},\quad
\text{for all $r\in(0,3)$}.
\end{equation}
We claim that $\frac{\alpha_\e}{\beta_\e}=O(1)$ as $\e\to0^+$; to
prove this, we assume by contradiction that along a sequence
$\e_n\to0^+$ there holds $\lim_{n\to+\infty}
\frac{\beta_{\e_n}}{\alpha_{\e_n}}=0$. Then from \eqref{eq:1}
there would follow, for all $r\in(0,3)$,
\begin{equation*}
\lim_{n\to+\infty}\frac{\alpha_{\e_n} }
{\sqrt{\int_{\Gamma_{\widetilde
        k}^-} u_{\e_n}^2\,d\sigma}}=
\lim_{n\to+\infty}\frac{\alpha_{\e_n} r
  +
\beta_{\e_n} r^{1-N}}
{\sqrt{\int_{\Gamma_{\widetilde
        k}^-} u_{\e_n}^2\,d\sigma}}\frac{1}{\big(r
  +\frac{\beta_{\e_n}}{\alpha_{\e_n}} r^{1-N}\big)}=
\frac{a  +r^{-N}}{\sqrt{\int_{\Gamma^-_{\tilde
        k}}\overline{U}^2d\sigma}},
\end{equation*}
thus giving rise to a contradiction since different values of $r$
yield different limits for the same sequence.

From the fact that $\{\frac{\alpha_\e}{\beta_\e}\}_\e$ is bounded, it
follows that there exist a sequence
$\e_n\to0^+$ and some $\ell\in\R$ such that $\lim_{n\to+\infty}
\frac{\alpha_{\e_n}}{\beta_{\e_n}}=\ell$. Hence \eqref{eq:1} implies
that 
\begin{equation*}
  \lim_{n\to+\infty}\frac{\beta_{\e_n}}{\sqrt{\int_{\Gamma_{\widetilde
          k}^-} u_{\e_n}^2\,d\sigma}}=
  \lim_{n\to+\infty}\frac{\alpha_{\e_n} r + \beta_{\e_n} r^{1-N}}
  {\sqrt{\int_{\Gamma_{\widetilde k}^-}
      u_{\e_n}^2\,d\sigma}}\frac{1}{\big(\frac{\alpha_{\e_n}}{\beta_{\e_n}}r
    + r^{1-N}\big)}
  =\frac{1}{\sqrt{\int_{\Gamma^-_{\tilde
          k}}\overline{U}^2d\sigma}}\frac{a \,r
    +r^{1-N}}{\big(\ell r + r^{1-N}\big)}
\end{equation*}
for all $r\in(0,3)$, hence necessarily $\ell=a$ (otherwise  different values of $r$
would yield different limits for the same sequence). In particular the
limit $\lim_{n\to+\infty}
\frac{\alpha_{\e_n}}{\beta_{\e_n}}$ does not depend on the sequence
$\{\e_{n}\}_n$, thus
implying that 
\[
\lim_{\e\to0^+}
\frac{\alpha_{\e}}{\beta_{\e}}=a. 
\]
 Hence \eqref{eq:1} implies that, for all $r\in(0,3)$,
\begin{align*}
  \lim_{\e\to0^+}\frac{\beta_{\e}}{\sqrt{\int_{\Gamma_{\widetilde
          k}^-} u_{\e}^2\,d\sigma}}&=
  \lim_{\e\to0^+ }\frac{\alpha_{\e} r + \beta_{\e} r^{1-N}}
  {\sqrt{\int_{\Gamma_{\widetilde k}^-}
      u_{\e}^2\,d\sigma}}\frac{1}{\big(\frac{\alpha_{\e}}{\beta_{\e}}r
    + r^{1-N}\big)}\\
 & =\frac{1}{\sqrt{\int_{\Gamma^-_{\tilde
          k}}\overline{U}^2d\sigma}}\frac{a \,r
    +r^{1-N}}{\big(a r + r^{1-N}\big)}=\frac{1}{\sqrt{\int_{\Gamma^-_{\tilde
          k}}\overline{U}^2d\sigma}}
\end{align*}
which yields claim \eqref{eq:2} in view of \eqref{eq:betaepsdeps}.

\smallskip\noindent
Combining \eqref{eq:2}, \eqref{eq_secondo_passo_asintotico_H_meno_R},
and Proposition \ref{lemma_asintotico_H_epsilon_epsilon}, we finally
obtain
\begin{multline*}
   \lim_{\e\to0^+} e^{\frac{\sqrt{\lambda_1(\Sigma)}}{\e}}\e^{-N}\sqrt{\int_{\Gamma_{\widetilde
          k}^-} u_{\e}^2\,d\sigma}\\=
\sqrt{\int_{\Gamma^-_{\tilde
          k}}\overline{U}^2d\sigma}
\bigg(  \int_{\SN_-}\widehat \Phi(\theta)\Psi^-(\theta)d\sigma(\theta)\bigg)
\bigg(\int_{\Sigma}\Phi(1,x')\psi_1^\Sigma(x')dx'\bigg)
\dfrac{\partial u_0}{\partial x_1}({\mathbf e_1})
\end{multline*}
thus completing the proof.
\end{proof}

\begin{remark}
 We would like to stress that, in the flavor of \cite{AT12}, the asymptotic behavior of solutions
is affected by the domain's geometry: the constants at the left hand
side of \eqref{eq:3} depend on the solutions
of the relative blow-up limits, in addition to the initial normalization $u_0$.
It is interesting to notice that the geometry of the left-hand side already
appears in the asymptotics of Proposition \ref{lemma_asintotico_H_epsilon_epsilon}, even if the solution
has not crossed the left junction yet.
\end{remark}

\begin{pfn}{Teorema \ref{t:teorema_principale}}
It follows combining Propositions \ref{p:conUeps_nosottosucc} and \ref{lemma_asintotico_H_epsilon_meno_R}.
\end{pfn}


\end{document}